\newtheorem{theorem}{Theorem}[section]
\newtheorem{lemma}[theorem]{Lemma}
\newtheorem{proposition}[theorem]{Proposition}
\newtheorem{definition}[theorem]{Definition}
\newcommand{\R}{\mathbb{R}}
\newcommand{\dive}{{{\rm div}\;}}
\newcommand{\sH}{{\rm H}}
\newcommand{\sL}{{\rm L}}
\newcommand{\dif}{{\rm d}}
\def\theequation{\@arabic{\c@section}.\@arabic{\c@equation}}    
\def\thetheorem{\@arabic{\c@section}.\@arabic{\c@theorem}}
\def\theenumi{\@roman{\c@enumi}}
\def\blacksquare{
\thinspace\nobreak \vrule width 5pt height 5pt depth 0pt}
\newenvironment{proof}{\begin{trivlist}
                       \item[]\hspace{0cm}{\bf Proof: }
                       \hspace{0cm} }{\hfill $\blacksquare$
                     \end{trivlist}}
\newenvironment{proofof}[1]{\begin{trivlist}
                       \item[]\hspace{0cm}{\bf Proof of #1: }
                       \hspace{0cm} }{\hfill $\blacksquare$
                     \end{trivlist}}
\numberwithin{figure}{section}
\begin{document}
\title{Minimization of the ground state of the mixture of two conducting materials in a small contrast regime}
\author{C. Conca, M. Dambrine, R. Mahadevan and D. Quintero}
\date{}
\maketitle

\textbf{Abstract.} We consider the problem of distributing two conducting materials with a prescribed volume ratio in a given domain so as to minimize the first eigenvalue of an elliptic operator with Dirichlet conditions. The gap between the two conductivities is assumed to be small (low contrast regime). For any geometrical configuration of the mixture, we provide a complete asymptotic expansion of the first eigenvalue. We then consider a relaxation approach to minimize the second order approximation with respect to the mixture. We present numerical simulations in dimensions two and three.

\section{Introduction}

Problems of mininimizing the ground state of composite materials appear frequently and are of interest in applications. We refer to Henrot \cite{Henrot},  Cox and McLaughlin \cite{CoxMcLaughlin1, CoxMcLaughlin2}, Cox and Lipton\cite{CoxLipton} and included references. In this article, we consider the following problem. Given a domain $\Omega$ and a subdomain $B$ and two nonnegative numbers $\alpha$ and $\beta$, we define the ground state $\lambda(B)$ of the mixture as the infimum of the $\lambda$ such that there exists $0 \ne u$ such that
\begin{equation}\label{evp-basic}
-\dive{\big((\alpha+(\beta-\alpha)\chi_{B})\nabla u \big)}=\lambda u \text{ in }\Omega \text{ and }u=0 \text{ on }\partial\Omega\,.
\end{equation}
In other words, $\lambda(B)$ is the smallest eigenvalue of the operator $-\dive{\big( (\alpha+(\beta-\alpha)\chi_{B})\nabla .\big)}$ on $\sH^1_{0}(\Omega)$. We are then interested in minimizing $\lambda(B)$ with respect to $B$  among the subdomains of $\Omega$ of given volume.

In general, it is well-known that this problem is not wellposed: the infimum is not usually reached at a given $B$ and we have to consider a relaxed version corresponding to a situation of homogenization (see \cite{CoxLipton}). 

Nevertheless, when $\Omega$ is a ball, the infimum is reached on a radially symmetric domain $B^*$ (see \cite{LionsTrombetti},\cite{ConcaMahadevan2}). In the recent years, much attention has been put on the determination of the corresponding $B^*$. First, Conca and al.\ conjectured in \cite{ConcaMahadevan1} that the global minimizer $B^*$ in $\Omega$ should be a concentric  ball of the prescribed volume. The conjecture was motivated by the situation in dimension one and by numerical simulations. Then, Dambrine and Kateb reinforced the conjecture by an order two sensitivity analysis in \cite{DambrineKateb} by proving that the concentric ball of prescribed volume is a local strict minimizer of $\lambda(B)$.

However, Conca et. al.\ proved in \cite{ConcaLaurainMahadevan} that the conjecture is false. Their strategy was the following. They consider the case of small contrast, that is to say, $\alpha$ and $\beta$ such that the difference of both conductivities is small: $\beta=\alpha(1+\varepsilon)$ and provide the first order asymptotic expansion $\lambda_1(B)$ of $\lambda(B)$   with respect to the small parameter $\varepsilon$ for any admissible domain $B \subset \Omega$. Then, they minimize the new objective functional $\lambda_{1}(B)$ with respect to $B$ and observe that the minimizer $B_1$ of this approximation is not always the concentric ball of prescribed volume. Finally, thanks to a precise estimate of the remainder in the approximation,  they prove that $\lambda(B_{1}) <\lambda(B^{*})$. 

Finally, Laurain proved in \cite{Laurain2014} that the global minimum of the first eigenvalue in low contrast regime is either a centered ball or the union of a centered ball and of a centered ring touching the boundary, depending on the prescribed volume ratio between the two materials. Thus the small contrast case is well understood when the domain is  a ball. 

We aim in this work to make a precise analysis of the small contrast case in general domains.  In Section 2, to begin with, we characterize completely the full asymptotic expansion of $\lambda(B)$ with respect to the small parameter $\varepsilon$. Subsequently, we obtain a second order approximation $\lambda_{2}(B)$ of $\lambda(B)$ with uniform estimates for the remainder, uniform with respect to $B$. This means that minimizers for the second order approximation $\lambda_2(B)$ are approximate minimizers for the original objective functional $\lambda(B)$. With this motivation, in Section 3, we study the problem of minimizing $\lambda_{2}$. Unlike the first order approximation $\lambda_1(B)$, the minimization problem for $\lambda_2(B)$ is not,  {\em a priori}, well posed and thus, qualitatively, resembles more closely the minimization problem for $\lambda(B)$. A relaxed formulation for the minimization problem for $\lambda_2(B)$ is obtained using $H$-measures. It can be seem that the relaxed problem for $\lambda_2(B)$ has a much more simple aspect compared to the relaxed problem for $\lambda(B)$ obtained in Cox and Lipton \cite{CoxLipton}. Finally, in Section 4, the optimality conditions for the relaxed problem for $\lambda_2(B)$ are obtained and the minimization problem is studied numerically using a descent algorithm. 

\section{Asymptotic expansion of the first eigenvalue with respect to the constrast.}
\setcounter{equation}{0}
\setcounter{theorem}{0}

We consider the low contrast regime, that is to say, $\alpha$ and $\beta$ such that the difference of both conductivities is small: $\beta=\alpha(1+\varepsilon)$. We shall denote the first eigenvalue in the problem \eqref{evp-basic} by $\lambda_\varepsilon(B)$ for a given distribution $B$ of the material with conductivity $\beta$ and a given value of the contrast parameter $\varepsilon>0$. 

The existence of an asymptotic  development for $\lambda_\varepsilon(B)$, for given $B$, is classical from perturbation theory of simple eigenvalues. By the  Krein-Rutman theorem, the first eigenvalue $\lambda_\varepsilon(B)$ in \eqref{evp-basic} is simple. The corresponding normalized eigenfunction, with unit $L^2$ norm and taken to be non-negative, will be denoted by  $u_\varepsilon(B)$. So, by classical results from perturbation theory (see, for instance, Theorem 3, Chapter 2.5 of Rellich \cite{Rellich} ), for a given $B$, the map $\varepsilon \mapsto\left( \lambda_{\varepsilon},u_{\varepsilon}\right)$ is analytic in $(\R,\sH^1_{0}(\Omega))$. Therefore there are sequences $(\lambda_{i})$ of real numbers and $(u_{i})$ of functions in $\sH^1_{0}(\Omega)$ such that:
\begin{equation}
\lambda_{\varepsilon}=\sum_{i=0}^{\infty} \lambda_{i}\varepsilon^{i}  \text{ and }
u_{\varepsilon}= \sum_{i=0}^{\infty} u_{i}\varepsilon^{i}. \label{developpement:vp:fp}
\end{equation}
As a consequence, there are constants $C_{n}(B)$ such that
\begin{equation*}
\left| \lambda_{\varepsilon}-\sum_{i=0}^{n} \lambda_{i}\varepsilon^{i} \right| \leq C_{n}(B) \varepsilon^{n+1} \text{ and }
\left\| u_{\varepsilon}- \sum_{i=0}^{n} u_{i}\varepsilon^{i} \right\|_{\sH^1_{0}} \leq C_{n}(B) \varepsilon^{n+1}.
\end{equation*}
In this section, we will first identify the coefficients $\lambda_{i}$, $u_{i}$ then prove that the constants $C_{n}(B)$ can be taken uniform in $B$. This will serve in obtaining an approximate model problem for the eigenvalue minimization problem. 

\subsection{Computation of the coefficients in \eqref{developpement:vp:fp}}

The terms in the the asymptotic expansions in \eqref{developpement:vp:fp} may be identified, formally, by injecting the expansions in the equations defining  $(\lambda_{\varepsilon},u_{\varepsilon})$, that is, 
\begin{eqnarray*}
-\dive{\left(\alpha(1+\chi_{B}\varepsilon) \nabla \left(\sum_{i=0}^{\infty} u_{i}\varepsilon^{i}\right)\right)}&=&\left( \sum_{i=0}^{\infty} \lambda_{i}\varepsilon^{i}\right) \left(\sum_{i=0}^{\infty} u_{i}\varepsilon^{i}\right) \mbox{ in }\Omega,\\
\sum_{i=0}^{\infty} u_{i}\varepsilon^{i} & =&0 \mbox{ on } \partial \Omega,\\
\int_{\Omega}\left( \sum_{i=0}^{\infty} u_{i}\varepsilon^{i} \right)^2 &=&1.
\end{eqnarray*}
and we obtain then the following relationships by identifying the coefficients of same order in the previous power series.
\begin{equation}\label{order0}
\left\{\begin{array}{rcl}
-\alpha\Delta u_{0}-\lambda_{0}u_{0}&=&0 \text{ in }\Omega\\
u_{0}&=&0 \text{ on }\partial\Omega \quad \forall i\geq0,\\
\displaystyle\int_{\Omega}u_0^2 &=& 1\,.
\end{array}\right.
\end{equation}
\begin{equation}\label{order-i}
\left\{\begin{array}{rcl}
-\alpha \Delta u_{i} -\lambda_{0} u_{i} &=& \dive{\left(\alpha \chi_{B} \nabla u_{i-1}\right)}+\displaystyle\sum_{k=1}^i \lambda_{k}u_{i-k} \text{ in }\Omega \quad \forall i \geq1, \\
u_{i}&=&0 \text{ on }\partial\Omega \quad \forall i\geq0,\\
\displaystyle\sum_{k=0}^{i} \int_{\Omega} u_{k}u_{i-k}&=&0 \quad \forall i\geq1.
\end{array}\right.
\end{equation}
It is possible to rigorously justify the relations by using the expansions \eqref{developpement:vp:fp} in the weak formulation of the partial differential equation in \eqref{evp-basic}.  We then have an iterative procedure to compute the pair $(\lambda_{i},u_{i})$. 
\paragraph{The case: $i=0$.} By definition, one has:
\begin{eqnarray}
-\alpha\Delta u_{0}-\lambda_{0}u_{0}&=&0 \text{ in }\Omega \label{ecuacionu0}\\
u_{0}&=&0 \text{ on }\partial\Omega.
\end{eqnarray}
Hence, the couple $(\lambda_{0},u_{0})$ is an eigenpair of $-\alpha\Delta$ with homogeneous Dirichlet boundary condition. Clearly $u_{0} \geq 0$ in $\Omega$ since $u_{\varepsilon} \to u_0$ as $\varepsilon \rightarrow 0$ and the eigenmodes $u_\varepsilon$ are non-negative. Now, by the Krein-Rutman theorem, since all eigenmodes change sign except those associated to the first eigenvalue, we obtain that $\lambda_{0}$ is the ground state of $-\alpha\Delta$ with Dirichlet boundary condition and $u_{0}$ is the positive eigenmode with $\sL^2$-norm $1$.

Now assume that, for a given $i$, we have knowledge of all the $\lambda_{k},u_{k}$ for $k < i$.  We now then treat 

\noindent
{\bf The case $k=i$.} We know that $u_i$ satisfies the equation
\begin{eqnarray}
\label{eq5}
-\alpha \Delta u_{i} -\lambda_{0} u_{i} &=& \dive{\left(\alpha \chi_{B} \nabla u_{i-1}\right)}+\sum_{k=1}^i \lambda_{k}u_{i-k} \mbox{ in } \Omega,  \\
u_{i}&=&0 \text{ on }\partial\Omega,\nonumber
\end{eqnarray}
Notice that the right hand side has the unknown quantity $\lambda_i$. We shall first obtain an expression for $\lambda_i$ in terms of $\lambda_k$'s and $u_k$'s for $k<i$ which have been assumed to be calculated previously.  The compatibility condition, the Fredholm alternative for the equation \eqref{eq5},  imposes the orthogonality of the right hand side of the former equation to the kernel of  $-\alpha\Delta -\lambda_0 I$ with Dirichlet boundary condition  which is spanned by $u_{0}$
\begin{equation*}
\int_{\Omega} \left( \dive{\left(\alpha \chi_{B} \nabla u_{i-1}\right)}+\sum_{k=1}^i \lambda_{k}u_{i-k} \right) u_{0}=0.
\end{equation*}
This gives the expression for the eigenvalue $\lambda_{i}$
\begin{equation}\label{eqlandai}
\lambda_{i}= \int_{B}\alpha \nabla u_{i-1}\cdot\nabla u_{0} - \sum_{k=2}^{i-1} \int_{\Omega} \lambda_{i-k}u_{0}u_{k}
\end{equation}
taking into account the fact that the $L^2$ norm of $u_0$ is $1$ and, $u_0$ and $u_1$ are orthogonal.  In the sequel, whenever there is a sum whose upper limit is less than the lower limit, we shall adopt the convention that the sum is $0$. 

Now, to end, we note that $u_i$ is not completely determined by the equation (\ref{eq5}), but only upto the kernel of $-\alpha\Delta -\lambda_0 I$. For $i=0$, the non-negativity of $u_0$ and the normalization condition (the third relation in \eqref{order0}) determines uniquely $u_0$. For general $i$, having determined  uniquely the $u_k$ for $k<i$, the term $u_i$ is determined uniquely using the normalization condition (the third relation in \eqref{order-i} which can be written as 
\begin{equation}\label{productoesc}
\int_{\Omega} u_{i}u_{0}= -\frac{1}{2}\sum_{k=1}^{i-1} \int_{\Omega} u_{k}u_{i-k}.
\end{equation}
and should be understood as the orthogonality relation $\int_{\Omega} u_{i}u_{0}=0$ when $i=1$.

\subsection{Uniform estimate of the remainders}

We seek to estimate the remainder in the expansions \eqref{developpement:vp:fp}, uniformly in $B$. Our main results in this section are the following estimates.
\begin{proposition}
\label{estimation:reste:1}
There exists a constant $C$, independent of $B$, such that
\begin{equation}\label{reste:ordre1}
|\lambda_{\varepsilon}-(\lambda_{0}+\varepsilon\lambda_{1})|\leq
 \sqrt{\frac{\lambda_0}{\alpha}}C \varepsilon^2.
\end{equation}
\end{proposition}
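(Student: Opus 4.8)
The plan is to control the first-order remainder $r_\varepsilon := u_\varepsilon - (u_0 + \varepsilon u_1)$ and $\rho_\varepsilon := \lambda_\varepsilon - (\lambda_0 + \varepsilon\lambda_1)$ directly from the weak formulation, tracking constants carefully to see that they depend on $B$ only through quantities that are uniformly bounded. First I would write the weak formulation of \eqref{evp-basic}: for all $v \in \sH^1_0(\Omega)$,
\[
\alpha\int_\Omega \nabla u_\varepsilon\cdot\nabla v + \alpha\varepsilon\int_B \nabla u_\varepsilon\cdot\nabla v = \lambda_\varepsilon\int_\Omega u_\varepsilon v,
\]
and subtract the corresponding identities satisfied by $u_0$ (namely \eqref{order0}) and by $u_1$ (namely \eqref{order-i} with $i=1$, tested against $v$). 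The key point in isolating a clean estimate is to test against $v = r_\varepsilon$ itself and exploit that $-\alpha\Delta - \lambda_0 I$ is, although not coercive, invertible on the orthogonal complement of $u_0$ with the spectral gap $\lambda_2^{(0)} - \lambda_0$ (the gap between the first two Dirichlet eigenvalues of $-\alpha\Delta$) controlling the inverse; this gap is a fixed constant independent of $B$.

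Concretely, the second-order Taylor-type remainder satisfies an equation of the form
\[
-\alpha\Delta r_\varepsilon - \lambda_0 r_\varepsilon = \varepsilon\,\dive(\alpha\chi_B\nabla r_\varepsilon) + \rho_\varepsilon u_\varepsilon + \varepsilon\lambda_1 r_\varepsilon + (\text{explicit }\varepsilon^2\text{ terms in } u_0,u_1,\lambda_1)
\]
in the weak sense, together with the normalization-derived scalar relation that pins down $\int_\Omega r_\varepsilon u_0$ to be $O(\varepsilon^2)$. I would first extract $\rho_\varepsilon$ by testing against $u_0$: all the $r_\varepsilon$-terms on the right that survive pair with $u_0$, and since $\int_\Omega u_0^2 = 1$ one gets $\rho_\varepsilon = \varepsilon\alpha\int_B\nabla r_\varepsilon\cdot\nabla u_0 + (\text{quadratic in known terms})$, so $|\rho_\varepsilon| \le \varepsilon\alpha\|\nabla r_\varepsilon\|_{\sL^2}\|\nabla u_0\|_{\sL^2} + C\varepsilon^2$ — note $\|\nabla u_0\|_{\sL^2}^2 = \lambda_0/\alpha$ by \eqref{ecuacionu0}, which is where the factor $\sqrt{\lambda_0/\alpha}$ will enter. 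Then, testing against $r_\varepsilon$ and decomposing $r_\varepsilon = (\int_\Omega r_\varepsilon u_0)u_0 + r_\varepsilon^\perp$, the spectral gap gives $\|\nabla r_\varepsilon^\perp\|_{\sL^2}^2 \lesssim$ (right-hand side paired with $r_\varepsilon^\perp$), and the term $\varepsilon\int_B\nabla r_\varepsilon\cdot\nabla r_\varepsilon^\perp$ is absorbed into the left for $\varepsilon$ small since $0 \le \chi_B \le 1$ uniformly. This yields $\|\nabla r_\varepsilon\|_{\sL^2} \le C\varepsilon$ with $C$ independent of $B$, and feeding this back into the bound for $\rho_\varepsilon$ gives $|\rho_\varepsilon| \le \sqrt{\lambda_0/\alpha}\,C\varepsilon^2$.

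The main obstacle is making the absorption argument genuinely uniform in $B$: one must ensure that the threshold on $\varepsilon$ below which the $\varepsilon\int_B|\nabla r_\varepsilon|^2$ term can be moved to the left does not degenerate as $B$ varies, and that the a priori $\sH^1_0$-bound on $u_\varepsilon$ (needed to close the estimate on $\rho_\varepsilon u_\varepsilon$) is uniform in $B$. The first is fine because $\alpha(1+\varepsilon\chi_B) \ge \alpha$ pointwise, so the coercivity constant of the perturbed bilinear form on $\sH^1_0$ modulo $u_0$ is bounded below independently of $B$; the second follows from $\alpha\|\nabla u_\varepsilon\|_{\sL^2}^2 \le \lambda_\varepsilon(B) \le \lambda_\varepsilon(\emptyset)(1+\varepsilon) = \lambda_0(1+\varepsilon)$ by the monotonicity of eigenvalues in the coefficient, again uniform in $B$. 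Once these two uniformities are in hand, every constant appearing is a fixed function of $\Omega$, $\alpha$ and the first Dirichlet eigengap, and the stated estimate \eqref{reste:ordre1} follows.
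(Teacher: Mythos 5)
Your route (a direct bootstrap on the remainders $r_\varepsilon=u_\varepsilon-(u_0+\varepsilon u_1)$ and $\rho_\varepsilon=\lambda_\varepsilon-(\lambda_0+\varepsilon\lambda_1)$, via the spectral gap on the orthogonal complement of $u_0$) is genuinely different from the paper's, which never estimates the eigenfunction remainder at all: the paper shows that $(\lambda_0+\varepsilon\lambda_1,\,u_0+\varepsilon u_1)$ is an $\mathcal{O}(\varepsilon^2)$-quasimode in $\sH^{-1}(\Omega)$, using the uniform bound $\|u_1\|_{\sH^1_0(\Omega)}\le C$ to control the residual, bounding $\|u_0+\varepsilon u_1\|_{\sH^{-1}(\Omega)}$ below by $\sqrt{\alpha/\lambda_0}$, and then identifying the nearby spectral point with $\lambda_\varepsilon$ through $|\lambda_\varepsilon-\lambda_0|\le C\varepsilon$ and the simplicity of $\lambda_0$. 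Your approach could in principle work, but as written it has a genuine gap.

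The gap is your claim that the normalization "pins down $\int_\Omega r_\varepsilon u_0$ to be $O(\varepsilon^2)$" (and, implicitly, that the coefficient multiplying $\rho_\varepsilon$ when you test with $u_0$, which is $\int_\Omega u_\varepsilon u_0$ and not $\int_\Omega u_0^2$, is close to $1$). Neither follows from $\|u_\varepsilon\|_{\sL^2}=\|u_0\|_{\sL^2}=1$ alone: since $\int_\Omega u_0u_1=0$, one has exactly $\int_\Omega r_\varepsilon u_0=\int_\Omega u_\varepsilon u_0-1=-\tfrac12\|u_\varepsilon-u_0\|_{\sL^2(\Omega)}^2$, so the normalization only yields a bound that is quadratic in the unknown $\|r_\varepsilon\|$ (plus $\varepsilon^2\|u_1\|_{\sL^2}^2$). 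Feeding this into your energy estimate produces an inequality of the form $X\le C\,(\varepsilon X+X^2+\varepsilon^2)$ for $X=\|r_\varepsilon\|_{\sH^1_0(\Omega)}$, and the quadratic term cannot be "absorbed for $\varepsilon$ small" the way the $\varepsilon\int_B|\nabla r_\varepsilon|^2$ term can: you must either first prove $\|u_\varepsilon-u_0\|_{\sL^2(\Omega)}\le C\varepsilon$ uniformly in $B$ (a first-order eigenfunction estimate requiring the same spectral-gap machinery one order lower), or run a continuity-in-$\varepsilon$ argument (using $X(0)=0$, continuity of $\varepsilon\mapsto u_\varepsilon$ for fixed $B$, and uniformity in $B$ of the constant $C$) to exclude the large root of the quadratic inequality. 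Neither step appears in your proposal. In addition, your closing assertion that all constants depend only on $\Omega$, $\alpha$ and the eigengap overlooks that $u_1$ and $\lambda_1$ depend on $B$; the uniform bounds $\lambda_1\le\lambda_0$ and $\|u_1\|_{\sH^1_0(\Omega)}\le C$ (Lemma \ref{borne:uniforme:u1} of the paper, proved by exactly the spectral-gap estimate you invoke for $r_\varepsilon$) must be established before your "explicit $\varepsilon^2$ terms" are uniformly controlled. With these two points supplied your argument closes; the paper's quasimode argument is precisely a way of bypassing them.
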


\begin{proposition}
\label{estimation:reste: 2}
There is a constant $C>0$ independent of $B$ such that:
\begin{equation}\label{reste:ordre2}
 |\lambda_{\varepsilon}-(\lambda_0+\varepsilon\lambda_1+\varepsilon^2\lambda_2)|\leq 2\, C \varepsilon^3\sqrt{\frac {\lambda_0}{\alpha}}.
\end{equation}
\end{proposition}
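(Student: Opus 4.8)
The plan is to reproduce, one order higher, the argument behind Proposition \ref{estimation:reste:1}: exhibit an explicit $O(\varepsilon^3)$ residual for the pair $(w_\varepsilon,\widetilde\lambda_\varepsilon)$, show it is bounded uniformly in $B$, and convert this into the eigenvalue estimate by a symmetry identity. Write $a_\varepsilon(u,v):=\int_\Omega\alpha(1+\varepsilon\chi_B)\nabla u\cdot\nabla v$ for the form attached to \eqref{evp-basic}, and set $w_\varepsilon:=u_0+\varepsilon u_1+\varepsilon^2u_2$, $\widetilde\lambda_\varepsilon:=\lambda_0+\varepsilon\lambda_1+\varepsilon^2\lambda_2$. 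Feeding the recursion \eqref{order0}--\eqref{order-i} (equivalently \eqref{eq5}) into $a_\varepsilon(w_\varepsilon,\cdot)-\widetilde\lambda_\varepsilon(w_\varepsilon,\cdot)_{\sL^2}$, the contributions of orders $\varepsilon^0,\varepsilon^1,\varepsilon^2$ cancel by the very definition of the $(\lambda_i,u_i)$, leaving
\begin{equation*}
a_\varepsilon(w_\varepsilon,v)-\widetilde\lambda_\varepsilon\int_\Omega w_\varepsilon v=\langle r_\varepsilon,v\rangle\qquad\forall\,v\in\sH^1_0(\Omega),
\end{equation*}
with $r_\varepsilon=-\varepsilon^3\big(\dive(\alpha\chi_B\nabla u_2)+\lambda_1u_2+\lambda_2u_1+\varepsilon\lambda_2u_2\big)\in\sH^{-1}(\Omega)$.

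The next step is the bound $\|r_\varepsilon\|_{\sH^{-1}}\le C\varepsilon^3$ with $C$ independent of $B$. Since $\|\chi_B\|_{\sL^\infty}\le1$, this reduces to uniform bounds on $|\lambda_1|,|\lambda_2|,\|u_1\|_{\sH^1_0},\|u_2\|_{\sH^1_0}$. One has $\lambda_1=\int_B\alpha|\nabla u_0|^2\le\alpha\|\nabla u_0\|_{\sL^2}^2=\lambda_0$. The bound on $\|u_1\|_{\sH^1_0}$, hence on $|\lambda_2|$ through \eqref{eqlandai}, follows by applying to \eqref{eq5} with $i=1$ the Fredholm solvability estimate: on the $\sL^2$-orthogonal complement of $u_0$ the operator $-\alpha\Delta-\lambda_0 I$ has an inverse bounded by a constant depending only on $\alpha$ and on the spectral gap $\lambda_0^{(2)}-\lambda_0$ of $-\alpha\Delta$ on $\Omega$, while its right-hand side $\dive(\alpha\chi_B\nabla u_0)+\lambda_1u_0$ is controlled uniformly in $B$; repeating the step for $i=2$ bounds $\|u_2\|_{\sH^1_0}$. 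Substituting these into the formula for $r_\varepsilon$ gives the claim.

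It remains to pass from the residual to the eigenvalue. Testing the weak form of \eqref{evp-basic} against $w_\varepsilon$ and the identity above against the nonnegative, $\sL^2$-normalized eigenfunction $u_\varepsilon$, and subtracting, the symmetry of $a_\varepsilon$ and of the $\sL^2$ product yields
\begin{equation*}
(\lambda_\varepsilon-\widetilde\lambda_\varepsilon)\int_\Omega w_\varepsilon u_\varepsilon=\langle r_\varepsilon,u_\varepsilon\rangle.
\end{equation*}
Here $|\langle r_\varepsilon,u_\varepsilon\rangle|\le\|r_\varepsilon\|_{\sH^{-1}}\|\nabla u_\varepsilon\|_{\sL^2}\le C\varepsilon^3\sqrt{\lambda_0/\alpha}$, using $\alpha\|\nabla u_\varepsilon\|_{\sL^2}^2\le a_\varepsilon(u_\varepsilon,u_\varepsilon)=\lambda_\varepsilon\le(1+\varepsilon)\lambda_0$ (the last bound from $\chi_B\le1$, i.e.\ $\lambda_\varepsilon$ does not exceed the eigenvalue of the homogeneous mixture $B=\Omega$). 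On the other hand $\int_\Omega w_\varepsilon u_\varepsilon\ge\tfrac12$ for all small $\varepsilon$, uniformly in $B$: a Rayleigh-gap estimate gives $\langle u_\varepsilon,u_0\rangle_{\sL^2}^2\ge1-\varepsilon\lambda_0/(\lambda_0^{(2)}-\lambda_0)$ and, $u_\varepsilon$ and $u_0$ being nonnegative, $\langle u_\varepsilon,u_0\rangle_{\sL^2}$ is the positive root, close to $1$; the correction $\varepsilon\langle u_1,u_\varepsilon\rangle_{\sL^2}+\varepsilon^2\langle u_2,u_\varepsilon\rangle_{\sL^2}$ is $O(\varepsilon)$ by the uniform bounds above. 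Dividing produces \eqref{reste:ordre2} on a fixed interval $\varepsilon\in(0,\varepsilon_0]$ independent of $B$, and for $\varepsilon\ge\varepsilon_0$ the bound is immediate since $\lambda_\varepsilon$ grows at most linearly while $\widetilde\lambda_\varepsilon$ is quadratic.

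The delicate point is not any single estimate but the uniformity of all the constants in $B$: one has to check that the Fredholm solvability bound for \eqref{eq5}, the Poincar\'e-type constants entering the $\sH^{-1}$ norm, and the gap estimate for $\int_\Omega w_\varepsilon u_\varepsilon$ are all governed by quantities attached to $\Omega$ and to $-\alpha\Delta$ alone, with $B$ entering only through $\chi_B$ (bounded by $1$) and through $u_1,u_2$, which have been bounded a priori.
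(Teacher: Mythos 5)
Your proof is correct, but it reaches the estimate by a route genuinely different from the paper's in its final step. The paper builds the same truncated pair $(w_\varepsilon,\widetilde\lambda_\varepsilon)$, computes the same $O(\varepsilon^3)$ residual, and proves the same uniform bounds on $\lambda_1,\lambda_2,u_1,u_2$; but it then invokes the quasimode lemma of \cite{DaugeFaou} in $\sH^{-1}(\Omega)$: after bounding the residual by $C\varepsilon^3$ and bounding $\|w_\varepsilon\|_{\sH^{-1}}$ from below by $\tfrac12\sqrt{\alpha/\lambda_0}$, it concludes that some point of the spectrum lies within $2C\varepsilon^3\sqrt{\lambda_0/\alpha}$ of $\widetilde\lambda_\varepsilon$, and must then argue separately that this point is indeed $\lambda_\varepsilon$ (otherwise higher eigenvalues would accumulate at the simple eigenvalue $\lambda_0$). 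You avoid the quasimode machinery altogether: the symmetry identity $(\lambda_\varepsilon-\widetilde\lambda_\varepsilon)\int_\Omega w_\varepsilon u_\varepsilon=\langle r_\varepsilon,u_\varepsilon\rangle$ pins down the first eigenvalue directly, and the spectral information that the paper uses for the identification step enters instead through your lower bound $\int_\Omega w_\varepsilon u_\varepsilon\ge\tfrac12$, proved via the Rayleigh-gap estimate for $\langle u_\varepsilon,u_0\rangle_{\sL^2}$ together with the nonnegativity of $u_0$ and $u_\varepsilon$. What your route buys is a self-contained, elementary argument needing only $\|\nabla u_\varepsilon\|_{\sL^2}\le\sqrt{(1+\varepsilon)\lambda_0/\alpha}$; what the paper's route buys is that its lower bound concerns only $\|w_\varepsilon\|_{\sH^{-1}}$ (no information on $u_\varepsilon$ is needed at that stage) and that the quasimode framework also gives eigenfunction proximity for free. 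One detail you should make explicit: when you say that ``repeating the step for $i=2$'' bounds $\|u_2\|_{\sH^1_0}$, note that $u_2$ is not orthogonal to $u_0$, so the Fredholm inverse controls only its component in the orthogonal complement of $u_0$; the component along $u_0$ equals $-\tfrac12\int_\Omega u_1^2$ by \eqref{productoesc}, hence is uniformly bounded --- this is precisely the $u_2+au_0$ device used in Lemma \ref{borne:uniforme:u2}, and with that line added your estimate of $\|u_2\|_{\sH^1_0}$ is complete.
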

The main tool we use for the estimation of the remainders is the notion of \emph{$h$-quasimode} with $h=\mathcal{O}(\varepsilon^k)$, for $k=1,2$ in the sequel. The notion of quasimode is defined as follows.
\begin{definition}
Let $A$ be a self-adjoint operator on a Hilbert space $H$ with domain $D(A)$. For a fixed $h>0$, a pair $(\lambda,u)\in \R \times D(A)\setminus\{0\} $ is called a \emph{$h$-quasimode} if we have
$$\| (A-\lambda)u\|_{H} \leq h \|u\|_{H}.$$
\end{definition}
The interest of such a definition relies on the following fact: if $(\lambda,u)$ is a \emph{$h$-quasimode} of $A$, then the distance from $\lambda$ to the spectrum of $A$ is less than $h$ and the distance between $u$ and certain eigenspaces of $A$ can be estimated (See Lemma 2-2 in \cite{DaugeFaou}). We will prove that our truncated power series expansions are quasimodes in the Hilbert space $\sH^{-1}(\Omega)$.
\subsubsection*{Remainder of order one.}
The first step is to prove a uniform bound in $B$ of $\|u\|_{\sH^1(\Omega)}$.
\begin{lemma}
 \label{borne:uniforme:u1}
There exists $C$, which is independent of $B$, such that:
\begin{equation}\label{estimacionu1}
\|u_{1}\|_{\sH^1_{0}(\Omega)}\le C \text{ and }|\lambda_{\varepsilon}-\lambda_{0}|\leq C \varepsilon.
\end{equation}
\end{lemma}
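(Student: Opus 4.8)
The plan is to prove the two claims separately by elementary variational arguments; the only real content is to check that every constant depends on $\Omega$, $\alpha$ and $\lambda_0$ but never on $B$. As a preliminary (needed in any case) I record the uniform bound on $\lambda_1$: taking $i=1$ in \eqref{eqlandai} gives $\lambda_1=\alpha\int_B|\nabla u_0|^2$, so that $0\le\lambda_1\le\alpha\int_\Omega|\nabla u_0|^2=\lambda_0$ by \eqref{ecuacionu0} and the normalization $\|u_0\|_{\sL^2}=1$. In particular $\lambda_1$ is bounded uniformly in $B$.

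For the $\sH^1_0$-bound on $u_1$, I would test the equation \eqref{eq5} for $i=1$ against $u_1$ itself. Using the weak formulation and the orthogonality $\int_\Omega u_0u_1=0$ coming from \eqref{productoesc}, this yields
$$\alpha\int_\Omega|\nabla u_1|^2-\lambda_0\int_\Omega u_1^2=-\alpha\int_B\nabla u_0\cdot\nabla u_1.$$
Since $u_1\perp u_0$ in $\sL^2(\Omega)$, the min–max characterization of the spectrum of $-\alpha\Delta$ on $\sH^1_0(\Omega)$ gives $\alpha\int_\Omega|\nabla u_1|^2\ge\mu_2\int_\Omega u_1^2$, where $\mu_2>\lambda_0$ is the second Dirichlet eigenvalue of $-\alpha\Delta$ on $\Omega$; hence the left-hand side is bounded below by $\delta\,\alpha\int_\Omega|\nabla u_1|^2$ with $\delta:=1-\lambda_0/\mu_2\in(0,1)$ depending only on $\Omega$ and $\alpha$. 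Bounding the right-hand side by Cauchy–Schwarz together with $\int_B|\nabla u_0|^2\le\int_\Omega|\nabla u_0|^2=\lambda_0/\alpha$, one obtains $\delta\,\alpha\|\nabla u_1\|_{\sL^2}^2\le\alpha\,\|\nabla u_0\|_{\sL^2}\|\nabla u_1\|_{\sL^2}$, so that $\|\nabla u_1\|_{\sL^2}\le\delta^{-1}\sqrt{\lambda_0/\alpha}$; Poincaré's inequality then upgrades this to the announced uniform bound on $\|u_1\|_{\sH^1_0(\Omega)}$.

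For the estimate on $\lambda_\varepsilon-\lambda_0$, I would use the Rayleigh quotient characterization of the first eigenvalue of \eqref{evp-basic}, namely $\lambda_\varepsilon=\min_{v\in\sH^1_0(\Omega)\setminus\{0\}}\big(\alpha\int_\Omega|\nabla v|^2+\alpha\varepsilon\int_B|\nabla v|^2\big)\big/\int_\Omega v^2$. Using $v=u_0$ as a competitor gives $\lambda_\varepsilon\le\alpha\int_\Omega|\nabla u_0|^2+\alpha\varepsilon\int_B|\nabla u_0|^2=\lambda_0+\varepsilon\lambda_1\le(1+\varepsilon)\lambda_0$; conversely, dropping the nonnegative term $\alpha\varepsilon\chi_B|\nabla v|^2$ in the numerator shows $\lambda_\varepsilon\ge\lambda_0$ for every $\varepsilon>0$. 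Therefore $0\le\lambda_\varepsilon-\lambda_0\le\lambda_0\varepsilon$, which is the desired bound with a constant independent of $B$.

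I do not expect a serious obstacle here. The one point requiring care is the source of the coercivity constant $\delta$: it comes from the spectral gap $\mu_2-\lambda_0$ of the \emph{fixed} operator $-\alpha\Delta$ (which is positive since the first Dirichlet eigenvalue is simple, as already used above), so it is genuinely independent of $B$, and this is exactly what makes the bound on $u_1$ uniform. The only other thing to verify is that testing \eqref{eq5} against $u_1$ is legitimate, which it is since $u_1\in\sH^1_0(\Omega)$ by construction.
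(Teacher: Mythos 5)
Your proof is correct and follows essentially the same route as the paper: test the order-one equation against $u_{1}$, use the spectral gap $\lambda_{0}<\mu_{2}$ (the paper's $\lambda^{1}$) for functions $\sL^{2}$-orthogonal to $u_{0}$ to get coercivity independent of $B$, and sandwich $\lambda_{\varepsilon}$ between $\lambda_{0}$ and $(1+\varepsilon)\lambda_{0}$ via the Rayleigh quotient with $u_{0}$ as competitor. (The sign of the right-hand side $-\alpha\int_{B}\nabla u_{0}\cdot\nabla u_{1}$ in your tested identity is the one consistent with the expansion equations and the $\lambda_{1}u_{0}$ term indeed drops by orthogonality; this is immaterial for the Cauchy--Schwarz bound.)
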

\begin{proofof}{Lemma \ref{borne:uniforme:u1}} By using \eqref{eqlandai}, with $i=1$, we have the following expression and uniform bounds for 
$\lambda_{1}(B)$ 
\begin{equation}\label{estimativolanda1}
\lambda_{1} = \int_{B} \alpha |\nabla u_{0}|^2 \leq \alpha \int_{\Omega} |\nabla u_{0}|^2= \lambda_{0}\,.
\end{equation}
By \eqref{order-i}, for $i=1$, $u_{1}$ satisfies the following:
\begin{eqnarray}\label{order1-id}
-\alpha\Delta u_{1}-\lambda_{0}u_{1}& =& -\dive{\left(\alpha\chi_{B}\nabla u_{0}\right)} \text{ in }\Omega,\\
u_{1}&=& 0 \text{ on }\partial\Omega,\\
\int_{\Omega}u_{0}u_{1}&=&0.
\end{eqnarray}
After multiplying the first relation by $u_{1}$  and integrating over $\Omega$, by integration by parts, we get
$$\int_{\Omega}\alpha|\nabla u_{1}|^2-\lambda_{0}\int_{\Omega} u_{1}^2 =\int_{B} \alpha\nabla u_{0}     \cdot \nabla u_{1}.$$
By the characterization of the spectrum of an elliptic self-adjoint operator using the Rayleigh's quotient, we know that for all $v$ in $\sH^1_{0}(\Omega)$ orthogonal to the first eigenfunction $u_{0}$, it holds that 
\begin{equation}\label{vp2}
\lambda^1 \int_{\Omega} v^2 \leq \alpha \int_{\Omega} |\nabla v|^2,
\end{equation}
where $\lambda^1>\lambda_{0}$ is the second eigenvalue of $-\alpha\Delta$ in $\sH^1_{0}(\Omega)$. We have used the superscript here to distinguish the second eigenvalue $\lambda^1$ from $\lambda_1$ which appears in the second term of the expansion  \eqref{developpement:vp:fp}. Since $u_1$ is orthogonal to $u_0$, it follows using \eqref{vp2} that
\begin{equation}\label{estimacion}
\alpha\left( 1-\frac{\lambda_{0}}{\lambda^1} \right) \int_{\Omega}|\nabla u_{1}|^2 \leq \int_{\Omega}\alpha|\nabla u_{1}|^2-\lambda_{0}\int_{\Omega} u_{1}^2  \leq \alpha\| u_{0} \|_{\sH^1_{0}(\Omega)} \|u_{1}\|_{\sH^1_{0}(\Omega)}
\end{equation}
where at the end we have used \eqref{order1-id} and followed it by a simple estimation. We have obtained the upper bound for $u_{1}$. Finally, using the variational characterization of the first eigenvalue for elliptic self-adjoint operators, we obtain 
\begin{align*}
 \lambda_{0} = \int_{\Omega} \alpha|\nabla u_{0}|^2 \leq \int_{\Omega} \alpha|\nabla u_{\varepsilon}|^2  & \leq\int_{\Omega} \alpha(1+\chi_{B}\varepsilon) |\nabla u_{\varepsilon}|^2 =\lambda_{\varepsilon} \\
 &\leq \int_{\Omega} \alpha(1+\chi_{B}\varepsilon) |\nabla u_{0}|^2 \leq (1+\varepsilon) \int_{\Omega} \alpha |\nabla u_{0}|^2 = (1+\varepsilon) \lambda_{0}
\end{align*}
which allows us to conclude that that $|\lambda_{\varepsilon}-\lambda_{0}|\leq C \varepsilon$.
\end{proofof}

\noindent To use the quasimode strategy, we compute:
\begin{align}
-\dive & {\left(\alpha(1+\chi_{B}\varepsilon)  \nabla (u_{0}+\varepsilon u_{1}) \right)} -(\lambda_{0}+\varepsilon\lambda_{1})(u_{0}+\varepsilon u_{1}) \nonumber \\
&= -\alpha \Delta u_{0} -\lambda_{0} u_{0}  + \varepsilon \left( -\alpha\Delta u_{1} - \lambda_{0}u_{1}-\lambda_{1}u_{0}-\dive{(\alpha\chi_{B} \nabla u_{0})} \right) \nonumber \\
&~~~~+ \varepsilon^2 \left(- \lambda_{1} u_{1} -\dive{(\alpha\chi_{B}\nabla u_{1})}\right) \nonumber\\
&=\varepsilon^2 \left( -\lambda_{1} u_{1} -\dive{(\alpha\chi_{B}\nabla u_{1})}\right)\label{desllord1}
\end{align}
where we have used (\ref{ecuacionu0}) and, (\ref{eq5}) with $i=1$.
\begin{proofof}{ Proposition \ref{estimation:reste:1}}
We need a uniform bound on the normalized right-hand side: $\lambda_{1} u_{1} -\dive(\chi_{B}\nabla u_{1})$. Obviously, this term is only defined in $\sH^{-1}(\Omega)$ hence we have to make the estimation in the $\sH^{-1}(\Omega)$ norm. To that end, we use a test function $\varphi   \in \sH^1_{0}(\Omega)$ and compute the duality product:
\begin{align*}
\langle -\dive(\alpha\chi_{B}\nabla u_{1}),\varphi \rangle_{\sH^{-1}(\Omega)\times\sH_{0}^1(\Omega)} &= \int_{\Omega}\alpha\chi_{B} \nabla u_{1} \cdot \nabla \varphi = \int_{B} \alpha\nabla u_{1}\cdot \nabla \varphi \\
& \leq \alpha \|u_{1}\|_{\sH^1_{0}(\Omega)} \|\varphi\|_{\sH^1_{0}(\Omega)}.
\end{align*}
This proves that
\begin{equation}\label{estimaciondiv}
\|-\dive{\alpha(\chi_{B}\nabla u_{1})}\|_{\sH^{-1}(\Omega)}\leq \alpha \|u_{1}\|_{\sH^1_{0}(\Omega)}.
\end{equation}
And
\begin{align*}
\nonumber\langle\lambda_1u_1,\varphi\rangle_{\sH^{-1}(\Omega)\times\sH_{0}^1(\Omega)} &= \int_{\Omega}\lambda_1u_1\varphi\le \lambda_1\|u_1\|_{\sL^2(\Omega)}\|\varphi\|_{\sL^2(\Omega)}\\
\label{estimacion-lambda1u1}&\le \lambda_1 \|u_{1}\|_{\sH^1_{0}(\Omega)}\|\varphi\|_{\sH^1_{0}(\Omega)} \le C \|\varphi\|_{\sH^1_{0}(\Omega)} 
\end{align*}
using the estimation (\ref{estimativolanda1}) and the fact that $u_1$ is bounded independently of $B$ proved in Lemma \ref{borne:uniforme:u1}. This gives
\begin{equation}\label{estimacionlanda1u1}
\|\lambda_1u_1\|_{\sH^{-1}(\Omega)}\le C\,.
\end{equation}
Hence, we obtain from (\ref{desllord1}), using (\ref{estimaciondiv}) and (\ref{estimacionlanda1u1})  that there exists a constant $C$ independent of $B$ such that
\begin{equation}\label{eq12}
\| -\dive{\left(\alpha(1+\chi_{B}\varepsilon) \nabla (u_{0}+\varepsilon u_{1}) \right)} -(\lambda_{0}+\varepsilon\lambda_{1})(u_{0}+\varepsilon u_{1}) \|_{\sH^{-1}(\Omega)} \leq C \varepsilon^2
\end{equation}
Moreover, using $u_{0} \in \sH^1_{0}$ as test function in the definition of the $\sH^{-1}$-norm of $u_{0}+\varepsilon u_{1}$, we obtain
\begin{align}
\|u_0+\varepsilon u_1\|_{\sH^{-1}(\Omega)} &= \sup_{\varphi\in\sH_0^1(\Omega)}\frac{\langle u_0+\varepsilon u_1,\varphi\rangle_{\sH^{-1},\sH_0^1}}{\|\varphi\|_{\sH_0^1(\Omega)}}=\sup_{\varphi\in\sH_0^1(\Omega)}\cfrac{ \displaystyle \int_{\Omega}(u_0+\varepsilon u_1)\varphi}{\|\varphi\|_{\sH_0^1(\Omega)}} \nonumber \\[5pt]
&\ge \cfrac{\displaystyle\int_{\Omega}(u_0+\varepsilon u_1)u_0}{\|u_0\|_{\sH_0^1(\Omega)}}= \cfrac{\displaystyle\int_{\Omega}u_0^2}{\left(\displaystyle \int_{\Omega}|\nabla u_0|^2\right)^{\frac 12}}= \sqrt{\cfrac{\alpha}{\lambda_0}}\label{des13}.
\end{align}
Hence, by \eqref{eq12} and \eqref{des13}, we obtain
$$\| -\dive{\left(\alpha(1+\chi_{B}\varepsilon) \nabla (u_{0}+\varepsilon u_{1}) \right)} -(\lambda_{0}+\varepsilon\lambda_{1})(u_{0}+\varepsilon u_{1}) \|_{\sH^{-1}(\Omega)} \leq \sqrt{\frac{\lambda_0}{\alpha}}\, C \varepsilon^2\,\|u_0+\varepsilon u_1\|_{\sH^{-1}(\Omega)}$$
As a consequence of the theory of quasi mode, there is an element of the spectrum of the self-adjoint operator $-\dive{\left(\alpha(1+\chi_{B}\varepsilon) \nabla \cdot \right)}$ in $\sH^{-1}(\Omega)$ at distance at most $\sqrt{\frac{\lambda_0}{\alpha}}C \varepsilon^2$ from $\lambda_{0}+\varepsilon\lambda_{1}$.  To finish, we need to argue that this element of the spectrum is $\lambda_\varepsilon$, the first eigenvalue of  $-\dive{\left(\alpha(1+\chi_{B}\varepsilon) \nabla \cdot \right)}$. If these were higher eigenvalues, then as $\varepsilon \to 0$, they would tend to a higher eigenvalue of the operator $-\alpha \Delta$. But this would lead to a contradiction, since this sequence is within a distance $O(\varepsilon^2)$ from the sequence $\lambda_0+\varepsilon \lambda_1$ which tends to $\lambda_0$, the first eigenvalue of   $-\alpha \Delta$ which is simple.
\end{proofof}

\subsubsection*{Remainder of order two}

We first prove an uniform upper bound for $\lambda_{2}$ and $u_{2}$.
\begin{lemma}
 \label{borne:uniforme:u2}
There exists $C$, which is independent of $B$, such that:
\begin{equation}\label{estimacionu2}
\|u_{2}\|_{\sH^1_{0}(\Omega)}\le C \text{ and }\lambda_2\leq C.
\end{equation}
\end{lemma}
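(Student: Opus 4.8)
The plan is to mimic exactly the structure of the proof of Lemma \ref{borne:uniforme:u1}, one order higher. First I would obtain the bound on $\lambda_2$ directly from the closed-form expression \eqref{eqlandai} with $i=2$, namely
$$\lambda_{2}= \int_{B}\alpha \nabla u_{1}\cdot\nabla u_{0},$$
(the sum $\sum_{k=2}^{1}$ being empty by the stated convention). By Cauchy--Schwarz this is bounded by $\alpha\|u_0\|_{\sH^1_0(\Omega)}\|u_1\|_{\sH^1_0(\Omega)}$, and since $\|u_0\|_{\sH^1_0}^2=\lambda_0/\alpha$ is fixed and $\|u_1\|_{\sH^1_0}\le C$ uniformly in $B$ by Lemma \ref{borne:uniforme:u1}, this gives $\lambda_2\le C$ with $C$ independent of $B$. (One does not even need the absolute value here since the bound is one-sided, but $|\lambda_2|\le C$ follows the same way.)

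For the $\sH^1_0$ bound on $u_2$ I would use the equation \eqref{eq5} with $i=2$,
$$-\alpha\Delta u_{2}-\lambda_{0}u_{2}=\dive{(\alpha\chi_{B}\nabla u_{1})}+\lambda_{1}u_{1}+\lambda_{2}u_{0}\quad\text{in }\Omega,$$
together with the normalization \eqref{productoesc} for $i=2$, which reads $\int_\Omega u_2 u_0 = -\tfrac12\int_\Omega u_1^2$. The difficulty compared to the order-one case is that $u_2$ is not orthogonal to $u_0$, so one cannot apply the spectral gap inequality \eqref{vp2} to $u_2$ directly. The remedy is to split $u_2 = v_2 + c\, u_0$ where $c=\int_\Omega u_2 u_0 = -\tfrac12\|u_1\|_{\sL^2(\Omega)}^2$ and $v_2\perp u_0$. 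The coefficient $c$ is controlled uniformly: $|c|\le\tfrac12\|u_1\|_{\sL^2(\Omega)}^2\le \tfrac{\alpha}{2\lambda_0}\|u_1\|_{\sH^1_0(\Omega)}^2\le C$ by Lemma \ref{borne:uniforme:u1} and the Poincaré-type bound $\lambda_0\|w\|_{\sL^2}^2\le\alpha\|w\|_{\sH^1_0}^2$. So it suffices to bound $\|v_2\|_{\sH^1_0}$. Multiplying the equation by $v_2$ and integrating by parts, the $u_0$-component drops out of the left side in the combination $\int_\Omega\alpha|\nabla v_2|^2-\lambda_0\int_\Omega v_2^2$ (since $u_0$ is in the kernel of $-\alpha\Delta-\lambda_0$), and on the right side the term $\lambda_2 u_0$ contributes nothing because $\int_\Omega u_0 v_2=0$; we are left with
$$\int_{\Omega}\alpha|\nabla v_2|^2-\lambda_0\int_\Omega v_2^2 = \int_B\alpha\nabla u_1\cdot\nabla v_2 + \lambda_1\int_\Omega u_1 v_2 - c\Big(\int_B\alpha\nabla u_1\cdot\nabla u_0+\lambda_1\int_\Omega u_1 u_0\Big).$$
Using the spectral gap $\alpha(1-\lambda_0/\lambda^1)\|v_2\|_{\sH^1_0}^2\le\int_\Omega\alpha|\nabla v_2|^2-\lambda_0\int_\Omega v_2^2$ on the left, and on the right estimating every term by Cauchy--Schwarz and Poincaré in terms of $\|u_1\|_{\sH^1_0}$, $\|u_0\|_{\sH^1_0}$, $\lambda_1$, $|c|$ — all already bounded uniformly in $B$ — and $\|v_2\|_{\sH^1_0}$, one absorbs the single power of $\|v_2\|_{\sH^1_0}$ and concludes $\|v_2\|_{\sH^1_0}\le C$. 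Then $\|u_2\|_{\sH^1_0}\le\|v_2\|_{\sH^1_0}+|c|\,\|u_0\|_{\sH^1_0}\le C$, all constants independent of $B$.

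The main (mild) obstacle is purely bookkeeping: keeping track of the extra lower-order terms $\lambda_1 u_1$, $\lambda_2 u_0$ and the non-zero projection $c$ that appear at second order and did not at first order, and checking that each of them is already under uniform control from Lemma \ref{borne:uniforme:u1} and the expression \eqref{eqlandai}. No new analytic idea beyond the spectral-gap estimate \eqref{vp2} is needed; the orthogonal decomposition $u_2=v_2+cu_0$ is the one structural addition.
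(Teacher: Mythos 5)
Your proof is correct and follows essentially the same route as the paper: the paper also bounds $\lambda_2$ via \eqref{eqlandai} with $i=2$ and Cauchy--Schwarz, and estimates $u_2$ by passing to the combination $u_2+au_0$ with $a=-\int_\Omega u_2u_0$ (your $v_2=u_2-cu_0$), controlling the coefficient through \eqref{productoesc} and Lemma \ref{borne:uniforme:u1}, and then applying the spectral-gap absorption argument of \eqref{estimacion}. The only blemish is a small algebraic slip in your displayed identity: testing \eqref{eq5} for $i=2$ with $v_2$ yields
\begin{equation*}
\int_{\Omega}\alpha|\nabla v_2|^2-\lambda_0\int_\Omega v_2^2 \;=\; -\int_B\alpha\nabla u_1\cdot\nabla v_2 \;+\; \lambda_1\int_\Omega u_1 v_2,
\end{equation*}
so the divergence term carries a minus sign and the extra $-c\bigl(\int_B\alpha\nabla u_1\cdot\nabla u_0+\lambda_1\int_\Omega u_1 u_0\bigr)$ should not appear (its ingredients are anyway $\lambda_2$, already bounded, and $\int_\Omega u_1u_0=0$); since every such term is uniformly bounded in $B$, your absorption step and the conclusion $\|u_2\|_{\sH^1_0(\Omega)}\le C$ are unaffected.
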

\begin{proofof}{Lemma \ref{borne:uniforme:u2}}
First, notice that by \eqref{eqlandai} applied with $i=2$, we get
\begin{equation}\label{eqlanda2}
\lambda_2=\int_{B}\alpha\nabla u_{0}\cdot\nabla u_{1}\le\alpha\|u_{0}\|_{\sH^1_{0}(\Omega)}\|u_{1}\|_{\sH^1_{0}(\Omega)}\le C\
\end{equation}
where $C$ is independent of $B$ by the estimate (\ref{estimacionu1}).
In a second step, we search a uniform estimate for $u_{2}$. To that end, we follow the strategy already used to estimate $u_1$. The main change is that $u_2$ is not orthogonal to $u_0$ so the adaptation is not straightforward. To overcome the difficulty we introduce the combination $u_2+au_0$ where
$$a=-\int_{\Omega}u_2u_0$$
 is chosen such that  $u_2+au_0$ is $\sL^2(\Omega)$-orthogonal to $u_0$. \par
By \eqref{productoesc} for $i=2$ we have 
\begin{equation}\label{prodint}
\int_{\Omega}u_2u_0=-\frac 12\int_{\Omega}u_1^2
\end{equation}
which gives
\begin{equation}\label{estimaciona}
a=\frac 12\int_{\Omega}u_1^2\le\frac 12 \|u_1\|_{\sH^1(\Omega)}^2\le C
\end{equation}
with $C$ independent of $B$ (by \eqref{estimacionu1}). We now estimate $u_{2}+au_{0}$. For this, we multiply equation \eqref{ecuacionu0} by $a$ and add it to equation \eqref{eq5} to obtain:
\begin{eqnarray*}
-\alpha\Delta(u_2+au_0)-\lambda_0(u_2+au_0) & =& \dive(\alpha\chi_B\nabla u_1)+\lambda_1u_1+\lambda_2u_0,\quad \text{in}\ \Omega\\
u_2+au_0 & =& 0 \quad\text{on}\ \partial\Omega
\end{eqnarray*}
Using $u_2+au_0$ as test function, it follows that
\begin{align}
\alpha\int_{\Omega}| &\nabla(u_2+au_0)|^2-\lambda_0\int_{\Omega}(u_2+au_0)^2 \nonumber\\
&= \int_B\alpha\nabla u_1\cdot\nabla
(u_2+au_0)+\int_{\Omega}\lambda_1u_1(u_2
+au_0)+\int_{\Omega}\lambda_2u_0(u_2+au_0) \nonumber\\
&\le \Bigl(\alpha\|u_1\|_{\sH_0^1(\Omega)}+\lambda_1\|u_1\|_{\sH_0^1(\Omega)}
+|\lambda_2|\,\|u_0\|_{\sH_0^1(\Omega)}\Bigr)\,\|u_2+au_0\|_{\sH_0^1(\Omega)} \nonumber\\
&\le C\,\|u_2+au_0\|_{\sH_0^1(\Omega)}\label{des18}
\end{align}
where $C$ is independent of $B$, by estimates (\ref{estimativolanda1}), (\ref{estimacionu1}) and (\ref{eqlanda2}). Since $u_2+au_0$ is orthogonal to $u_0$, similarly as in the estimation \eqref{estimacion}, we conclude that $u_2+au_0$ is bounded in $\sH_0^1(\Omega)$ uniformly in  $B$. Therefore,
\begin{equation*}
\|u_2\|_{\sH_0^1(\Omega)}\le C+a\|u_0\|_{\sH_0^1(\Omega)}\le C\,'
\end{equation*}
with $C\,'$ independent of $B$ by estimate (\ref{estimaciona}).

\end{proofof}

\begin{proofof}{Proposition \ref{estimation:reste: 2}}
We compute
\begin{align}\label{calculord2}
&-\dive{\left(\alpha(1+\chi_{B}\varepsilon) \nabla (u_{0}+
\varepsilon u_{1}+\varepsilon^2 u_2) \right)} -(\lambda_{0}
+\varepsilon\lambda_{1}+\varepsilon^2\lambda_2)(u_{0}+
\varepsilon u_{1}+\varepsilon^2 u_2) \nonumber\\
&= -\alpha \Delta u_{0} -\lambda_{0} u_{0}  + \varepsilon \left(
-\alpha\Delta u_{1} - \lambda_{0}u_{1}-\lambda_{1}
u_{0}-\dive{(\alpha\chi_{B} \nabla u_{0})} \right) \nonumber\\
&\ \ \ \ +\varepsilon^2\left( -\alpha\Delta
u_2-\lambda_0u_2-\lambda_1u_1-\lambda_2u_0 -\dive{(\alpha\chi_{B}
\nabla u_{1})}\right) \nonumber\\
&\ \ \ \ +\varepsilon^3 \left(-\lambda_{1} u_{2}-\lambda_2 u_1
-\dive{(\alpha\chi_{B}\nabla u_{2})}\right)+\varepsilon^4(\lambda_2
u_2) \nonumber\\
&=\varepsilon^3 \left(-\lambda_{1} u_{2}-\lambda_2 u_1
-\dive{(\alpha\chi_{B}\nabla u_{2})}\right)+\varepsilon^4(\lambda_2
u_2)
\end{align}
using equations (\ref{ecuacionu0}), and (\ref{eq5}) for $i=1,2$.
Then, since
$$\|-\dive{(\alpha\chi_{B}\nabla
u_{2})}\|_{\sH^{-1}(\Omega)}\le\alpha\,
\|u_2\|_{\sH_0^1(\Omega)},$$
it follows from equation (\ref{calculord2}) and estimates (\ref{estimativolanda1}),  (\ref{estimacionu1}), and (\ref{estimacionu2}),
that for $\varepsilon\ll 1$,
\begin{align}\label{estimacionord2}
\|-&\dive{\left(\alpha(1+\chi_{B}\varepsilon) \nabla (u_{0}+
\varepsilon u_{1}+\varepsilon^2 u_2) \right)} -(\lambda_{0}
+\varepsilon\lambda_{1}+\varepsilon^2\lambda_2)(u_{0}+ \varepsilon
u_{1}+\varepsilon^2 u_2)\|_{\sH^{-1}(\Omega)} \nonumber\\
&\le\Bigl((\alpha+\lambda_1)\|u_2\|_{\sH_0^1(\Omega)}+|\lambda_2|\,
\|u_1\|_{\sH_0^1(\Omega)}\Bigr)\varepsilon^3+(|\lambda_2|\,\|u_2\|_{\sH_0^1(\Omega)})
\varepsilon^4 \nonumber\\
&\le C_1 \varepsilon^3+C_2\varepsilon^4\le C\varepsilon^3,
\end{align}
\noindent Moreover, one has
\begin{align*}
\|u_0+\varepsilon u_1+\varepsilon^2 u_2\|_{\sH^{-1}(\Omega)} &= \sup_{\varphi \in\sH_0^1(\Omega)}\cfrac{\displaystyle\int_{\Omega}(u_0+\varepsilon u_1+\varepsilon^2 u_2)\varphi}{\|\varphi\|_{\sH_0^1(\Omega)}}\ge\cfrac{\displaystyle\int_{\Omega}(u_0+\varepsilon u_1+\varepsilon^2 u_2)u_0}{\|u_0\|_{\sH_0^1(\Omega)}}\\
&= \cfrac{\displaystyle\int_{\Omega}u_0^2+\varepsilon^2\int_{\Omega}u_0u_2}{\|u_0\|_{\sH_0^1(\Omega)}}\,.
\end{align*}
Then, using relation (\ref{prodint}), we obtain
\begin{align*}
\|u_0+\varepsilon u_1+\varepsilon^2 u_2\|_{\sH^{-1}(\Omega)} &\ge \cfrac{1-\cfrac{\varepsilon^2}2 \displaystyle\int_{\Omega}u_1^2}{\|u_0\|_{\sH_0^1(\Omega)}} \ge \frac{1-\frac{\varepsilon^2}2\,C^2}{\|u_0\|_{\sH_0^1(\Omega)}},
\end{align*}
since $u_{1}$ is bounded in $H^1_0(\Omega)$ and consequently, in $L^2(\Omega)$ as shown in  (\ref{estimacionu1}). For $\varepsilon<\frac 1C$, we get
\begin{equation}\label{estimacporarriba}
\|u_0+\varepsilon u_1+\varepsilon^2 u_2\|_{\sH^{-1}(\Omega)}\ge\frac{1}{2\|u_0\|_{\sH_0^1(\Omega)}}=\frac 12 \sqrt{\frac {\alpha}{\lambda_0}}
\end{equation}
By (\ref{estimacionord2}) and (\ref{estimacporarriba}), we then have for $\varepsilon<1/C$ small enough
\begin{align}\label{estimacionfinal}
\|-&\dive{\left(\alpha(1+\chi_{B}\varepsilon) \nabla (u_{0}+
\varepsilon u_{1}+\varepsilon^2 u_2) \right)} -(\lambda_{0}
+\varepsilon\lambda_{1}+\varepsilon^2\lambda_2)(u_{0}+ \varepsilon
u_{1}+\varepsilon^2 u_2)\|_{\sH^{-1}(\Omega)} \nonumber\\
&\le 2\,C\varepsilon^3 \sqrt{\frac {\lambda_0}{\alpha}}\, \|u_0+\varepsilon u_1+\varepsilon^2 u_2\|_{\sH^{-1}(\Omega)}.
\end{align}
By the quasimode argument, there is an element of the spectrum of $-\dive(\alpha(1+\chi_B\varepsilon)\nabla\cdot)$ in 
$H^{-1}(\Omega)$ whose distance from  $\lambda_0+\varepsilon\lambda_1+\varepsilon^2\lambda_2$ is atmost $2\,C\varepsilon^3\sqrt{\frac{\lambda_0}{\alpha}}$. By similar arguments as those at the end of Proposition \ref{estimation:reste:1}, one concludes that such an element is precisely $\lambda_\varepsilon$, the first eigenvalue of $-\dive(\alpha(1+\chi_B\varepsilon)\nabla\cdot)$. 
\end{proofof} 


\section{Minimization of the second order approximation of $\lambda(B)$}\label{relaxation}
\setcounter{equation}{0}
\setcounter{theorem}{0}

Although our main interest is to minimize the ground state $\lambda_{\varepsilon}$ with respect to the set $B$, given $\varepsilon >0$, the general feeling is that the optimization problem is not well posed. A relaxed problem which is not so simple to describe was obtained in Cox and Lipton \cite{CoxLipton}.  In order to understand the nature of the problem for small contrasts Conca et. al. used a first order approximation  \cite{ConcaLaurainMahadevan}. Indeed, after proving a slightly weaker estimate as compared  to Proposition  \ref{estimation:reste:1} using a more ad hoc method of estimation, they conclude that 
\begin{equation} \label{asyminpb: order1}
\displaystyle{\left| \inf_B  \lambda_\varepsilon(B) -
  \lambda_0 -\varepsilon \inf_B  \lambda_1(B) \right| \le C
  \varepsilon^{\frac{3}{2}}\,.} 
\end{equation}
This permits to obtain approximate minimizers for the eigenvalue functional $\lambda_\varepsilon$ by minimizing, instead, the functional $\lambda_0+\varepsilon \lambda_1$. This is a well posed problem and since the original problem may not be well posed it may fail to capture some of the features of the original minimization problem. With this motivation, we go further and do a second order approximation.  Indeed, Proposition \ref{estimation:reste: 2} allows us to conclude that 
\begin{equation} \label{asyminpb: order2}
\displaystyle{\left| \inf_B  \lambda_\varepsilon(B) - \inf_B (\lambda_0 + \varepsilon \lambda_1(B) + \varepsilon^2 \lambda_2(B)) \right| \le C\varepsilon^3\,.}
\end{equation}
Thus, we can obtain approximate minimizers for the functional $\lambda_\varepsilon$, for given $\varepsilon >0$ small enough, by minimizing the functional $\lambda_0 + \varepsilon \lambda_1 + \varepsilon^2 \lambda_2$ which is a second order approximation of $\lambda_\varepsilon$. We then study the problem:
$$\text{minimize}\ \{\lambda_0+\varepsilon\lambda_1(B)+\varepsilon^2\lambda_2(B)\ ;\ B\subseteq\Omega,\,|B|=m\},\quad 0<m<|\Omega|,\ m\ \text{fixed}$$
or equivalently
$$\text{minimize}\ \{\lambda_1(B)+\varepsilon\lambda_2(B)\ ;\ B\subseteq\Omega,\,|B|=m\},$$
since $\lambda_0$ is independent of $B$ and $\varepsilon>0$ is fixed.
From the expressions for $\lambda_1(B),\lambda_2(B)$ computed in the previous section, we finally  consider the problem
$$\text{minimize }F(\chi):=\alpha \int_{\Omega}\chi(\nabla u_0+\varepsilon\nabla v(\chi))\cdot\nabla u_0
$$
over the class of admissible domains represented by their characteristic functions
$$  \mathcal U_{ad}:=\{\chi\ ;\ \chi=\chi_B,\, B\subseteq\Omega,\,|B|=m\}\subseteq\sL^{\infty}(\Omega),
$$
and $v=v(\chi)\in\sH_0^1(\Omega)$ satisfisfies
\begin{eqnarray}
  -\alpha\Delta v-\lambda_0 v=\lambda_1(\chi) u_0+\dive(\alpha\chi\nabla u_0),\\
  \lambda_1(\chi):=\int_{\Omega}\alpha\chi |\nabla u_0|^2,\\
  v\perp u_0\ {\text{in}}\ \sL^{2}(\Omega).\nonumber
\end{eqnarray}

\subsection{Relaxation of the minimization problem}

The functional $F$ is lower-semicontinuous for the weak-$*$ topology on $L^{\infty}(\Omega)$, being quadratic with respect to $\chi$, but the admissible set $\mathcal U_{ad}$ is not closed for this topology. In order to have a well-posed minimization problem we need to work on the closure $\overline{{\mathcal U}_{ad}}$ and calculate the {\em lower semicontinuous envelope} of $F$ with respect to the weak-$*$ topology on $L^{\infty}(\Omega)$.
\begin{equation*}
\bar{F}(\theta):=\inf\{\liminf F(\chi_n)\ :\ \chi_n\rightharpoonup\theta \ \text{in}\ \sL^{\infty}(\Omega)^{\ast}\},\ \theta\in{\overline{{\mathcal U}_{ad}}},
\end{equation*}
where
$$\overline{{\mathcal U}_{ad}}={\overline{\mathcal U_{ad}}}^{\,L^{\infty}(\Omega)^{\ast}}=\{\theta\in L^{\infty}(\Omega)\ ;\ 0\le\theta\le1,\,\int_{\Omega}\theta=m\}.$$
We shall follow the general procedure to compute $\bar{F}$ and obtain the following theorem. 
\begin{theorem}
\label{fonctionnelle:relaxee}
For any $\theta \in \overline{{\mathcal U}_{ad}}$, we have
$$\bar{F}(\theta)=\alpha \int_{\Omega} \theta \left[\nabla u_{0}+\varepsilon \nabla v_{\infty}(\theta)\right].\nabla u_{0} -\varepsilon \theta(1-\theta) |\nabla u_{0}|^2,$$
where $v_{\infty}(\theta) \in\sH^1_{0}(\Omega)$ is solution of
\begin{eqnarray}
  -\alpha\Delta v-\lambda_0 v=\lambda_1(\theta) u_0+\dive(\alpha\theta \nabla u_0),\\
  \lambda_1(\theta):=\int_{\Omega}\alpha\theta |\nabla u_0|^2,\\
  v\perp u_0\ {\text{in}}\ \sL^{2}(\Omega).\nonumber
\end{eqnarray}
\end{theorem}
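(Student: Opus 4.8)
The plan is to compute $\bar{F}(\theta)$ by establishing the two matching inequalities $\bar{F}(\theta)\ge R(\theta)$ and $\bar{F}(\theta)\le R(\theta)$, where $R(\theta)$ denotes the right-hand side of the stated formula, following the classical relaxation scheme and using $H$-measures to handle the single non-continuous term. The preliminary step is algebraic. The compatibility (Fredholm) condition for the equation defining $v(\chi)$ holds by the very definition of $\lambda_1(\chi)$, namely $\langle\lambda_1(\chi)u_0+\dive(\alpha\chi\nabla u_0),u_0\rangle=\lambda_1(\chi)-\int_\Omega\alpha\chi|\nabla u_0|^2=0$, so $v(\chi)$ (and likewise $v_\infty(\theta)$) is well defined once we impose $v\perp u_0$. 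Testing that equation against $v(\chi)$ and using $\int_\Omega u_0\,v(\chi)=0$ gives $\alpha\int_\Omega\chi\,\nabla u_0\cdot\nabla v(\chi)=\lambda_0\|v(\chi)\|_{\sL^2(\Omega)}^2-\alpha\|\nabla v(\chi)\|_{\sL^2(\Omega)}^2$, hence
$$F(\chi)=\lambda_1(\chi)+\varepsilon\bigl(\lambda_0\|v(\chi)\|_{\sL^2(\Omega)}^2-\alpha\|\nabla v(\chi)\|_{\sL^2(\Omega)}^2\bigr).$$
Along any $\chi_n\rightharpoonup\theta$ weak-$\ast$ in $\sL^\infty(\Omega)$ we have $\lambda_1(\chi_n)\to\lambda_1(\theta)$ (since $|\nabla u_0|^2\in\sL^1(\Omega)$) and, from the equation, $v_n:=v(\chi_n)\rightharpoonup v_\infty(\theta)$ in $\sH^1_0(\Omega)$, so $\|v_n\|_{\sL^2}\to\|v_\infty\|_{\sL^2}$ by Rellich compactness. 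Everything thus reduces to identifying
$$\sup\bigl\{\limsup_n\|\nabla v_n\|_{\sL^2(\Omega)}^2 \,:\, \chi_n\in\mathcal U_{ad},\ \chi_n\rightharpoonup\theta\bigr\}=\|\nabla v_\infty(\theta)\|_{\sL^2(\Omega)}^2+\int_\Omega\theta(1-\theta)|\nabla u_0|^2,$$
since substituting this back into the expression for $F$ and recombining via the same energy identity yields the claimed formula for $\bar{F}(\theta)$.

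For $\bar{F}(\theta)\ge R(\theta)$ — equivalently, the upper bound in the supremum above — set $w_n=v_n-v_\infty(\theta)\rightharpoonup0$ in $\sH^1_0(\Omega)$. Subtracting the equations for $v_n$ and $v_\infty$, testing against $w_n$, and using $\int_\Omega u_0 w_n=0$ together with $w_n\to0$ strongly in $\sL^2(\Omega)$, one obtains $\|\nabla w_n\|_{\sL^2}^2=-\int_\Omega(\chi_n-\theta)\nabla u_0\cdot\nabla w_n+o(1)$, and then Cauchy--Schwarz gives $\|\nabla w_n\|_{\sL^2}\le\|(\chi_n-\theta)\nabla u_0\|_{\sL^2}+o(1)$. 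Because $\chi_n$ takes only the values $0$ and $1$, $(\chi_n-\theta)^2=\chi_n-2\theta\chi_n+\theta^2\rightharpoonup\theta(1-\theta)$ weak-$\ast$, whence $\|(\chi_n-\theta)\nabla u_0\|_{\sL^2}^2\to\int_\Omega\theta(1-\theta)|\nabla u_0|^2$. Expanding $\|\nabla v_n\|^2=\|\nabla v_\infty\|^2+2\int_\Omega\nabla v_\infty\cdot\nabla w_n+\|\nabla w_n\|^2$ and letting $n\to\infty$ yields $\limsup_n\|\nabla v_n\|^2\le\|\nabla v_\infty\|^2+\int_\Omega\theta(1-\theta)|\nabla u_0|^2$, hence $\liminf_n F(\chi_n)\ge R(\theta)$ for every admissible sequence, i.e.\ $\bar{F}(\theta)\ge R(\theta)$.

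For the reverse inequality one must produce a recovery sequence, and here $H$-measures do the bookkeeping. Along a subsequence the scalar $H$-measure $\mu\ge0$ of $(\chi_n-\theta)$ on $\Omega\times S^{d-1}$ exists and satisfies $\int_{S^{d-1}}d\mu(x,\xi)=\theta(1-\theta)(x)\,dx$; applying the localization principle to $-\Delta w_n-\dive((\chi_n-\theta)\nabla u_0)\to0$ in $\sH^{-1}(\Omega)$ — valid since $\frac{\lambda_0}{\alpha}w_n$ and $\frac{1}{\alpha}(\lambda_1(\chi_n)-\lambda_1(\theta))u_0$ are relatively compact in $\sL^2(\Omega)$ — identifies the $H$-measure of $\nabla w_n$ in terms of $\mu$ and gives $\|\nabla w_n\|_{\sL^2}^2\to\int_{\Omega\times S^{d-1}}|\xi\cdot\nabla u_0(x)|^2\,d\mu(x,\xi)$. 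Maximizing over admissible $\mu$ reduces, fibrewise in $x$, to maximizing $\int_{S^{d-1}}|\xi\cdot\nabla u_0(x)|^2\,d\nu(\xi)$ over $\nu\ge0$ with $\nu(S^{d-1})=\theta(1-\theta)(x)$; the maximum is $\theta(1-\theta)(x)\,|\nabla u_0(x)|^2$, attained at the Dirac mass in the direction $\nabla u_0(x)/|\nabla u_0(x)|$, which requires $\nabla u_0\ne0$ almost everywhere — a standard fact for eigenfunctions, the critical set being Lebesgue-null by interior analyticity of $u_0$. This optimal $\mu$ is realized concretely by a locally periodic rank-one laminate with layers orthogonal to $\nabla u_0(x)$ and local volume fraction $\theta(x)$, built cube-by-cube and corrected to keep $|B_n|=m$; for it $(\chi_n-\theta)\nabla u_0$ is asymptotically a gradient, its divergence-free part tends to $0$ in $\sL^2(\Omega)$, so $\nabla w_n+(\chi_n-\theta)\nabla u_0\to0$ in $\sL^2(\Omega)$, the Cauchy--Schwarz step becomes an asymptotic equality, and $\limsup_n\|\nabla w_n\|^2=\int_\Omega\theta(1-\theta)|\nabla u_0|^2$. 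Combined with the lower bound, this gives $\bar{F}(\theta)=R(\theta)$.

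The step I expect to be the main obstacle is exactly this attainment argument: constructing the laminate adapted to the spatially varying, a priori only continuous, direction field $x\mapsto\nabla u_0(x)/|\nabla u_0(x)|$ off the critical set, patching the local pieces while respecting the volume constraint $|B_n|=m$, and making rigorous the localization/propagation step — that $\nabla w_n$ is asymptotically the gradient part of $-(\chi_n-\theta)\nabla u_0$. The lower bound, in contrast, is essentially elementary once the identity $F(\chi)=\lambda_1(\chi)+\varepsilon(\lambda_0\|v\|_{\sL^2}^2-\alpha\|\nabla v\|_{\sL^2}^2)$ is available.
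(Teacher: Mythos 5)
Your overall strategy is sound, and your treatment of the inequality $\bar F(\theta)\ge R(\theta)$ is actually \emph{more} elementary than the paper's argument: the energy identity $F(\chi)=\lambda_1(\chi)+\varepsilon\bigl(\lambda_0\|v(\chi)\|_{\sL^2}^2-\alpha\|\nabla v(\chi)\|_{\sL^2}^2\bigr)$ isolates the only non weakly continuous term as $-\alpha\|\nabla v(\chi)\|^2$, and your estimate via $w_n=v_n-v_\infty$, Cauchy--Schwarz and $(\chi_n-\theta)^2\rightharpoonup\theta(1-\theta)$ gives the lower bound with no microlocal analysis at all. The paper proceeds differently: it splits $v_n=w_n+z_n$, identifies the limit of the quadratic term $\int_\Omega\chi_n\nabla z_n\cdot\nabla u_0$ for an \emph{arbitrary} sequence through the $H$-measure of $\chi_n-\theta$ and the pseudo-differential operator of symbol $q(x,\xi)=-\frac{\xi\cdot\nabla u_0}{|\xi|^2}\xi$ (Lemma 5.1), carries out a genuine localization argument with cutoffs $\zeta_k,\psi_k$ to justify this on the bounded domain, and only then minimizes over the probability measure $\nu$. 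Your route buys a short, self-contained lower bound; the paper's route buys an exact formula for the limit along every sequence, which is what makes the subsequent optimization over $\nu$ straightforward. (Minor points: the statement that $\nabla u_0\neq0$ a.e.\ is not really needed, since on the zero set of $\nabla u_0$ both sides of the fibrewise maximization vanish for any $\nu$; and your bookkeeping of the factor $\alpha$ is internally consistent with the theorem as stated.)

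The genuine gap is the attainment (recovery sequence) step, and you have identified it yourself. Your plan offers two routes, neither completed: (i) the explicit locally periodic rank-one laminate orthogonal to $\nabla u_0(x)$, ``built cube-by-cube and corrected to keep $|B_n|=m$'', for which the key claim that $\nabla w_n+(\chi_n-\theta)\nabla u_0\to0$ in $\sL^2$ is asserted but not proved (the direction field is only measurable/continuous off the critical set, and the patching plus volume correction is exactly the delicate part); or (ii) the $H$-measure route, where you invoke ``the localization principle'' to get $\|\nabla w_n\|_{\sL^2}^2\to\int(\xi\cdot\nabla u_0)^2\,d\mu$, but on a bounded domain with a Dirichlet problem this identification is precisely what requires the cutoff argument the paper devotes its Step 5 to, and you still need a sequence of \emph{characteristic} functions realizing the optimal measure $\theta(1-\theta)\,\delta_{\nabla u_0(x)/|\nabla u_0(x)|}$. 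The paper closes this by citing the converse part of the Kohn--Tartar lemma (Lemma 3.3, from Allaire--Guti\'errez), which supplies exactly such a sequence for any admissible pair $(\theta,\nu)$; quoting that result (together with a short remark adjusting the volume to $|B_n|=m$, a point the paper itself glosses over) is the cleanest way to complete your proof, in place of the laminate construction you flag as the main obstacle.
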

The proof of the Theorem \ref{fonctionnelle:relaxee} will use some results on $H$-measures. This tool was introduced by P. G\'erard in \cite{Gerard} and L. Tartar in \cite{Tartar} to understand the obstruction to compactness via  a matrix of complex-valued Radon measures $(\mu_{ij}(x,\xi))_{1\leq i,j\leq p}$ on $\R^N\times\mathbb{S}^{N-1}$ on the space-frequency domain associated to weakly convergent sequences.  We refer to the two previous references for a complete presentation of $H$-measures and to \cite{AllaireGutierrez} for their applications in small contrast homogenization. We will need the two following results (Theorem 2-2 and Lemma 2-3 in \cite{AllaireGutierrez}).
\begin{theorem}{\bf \cite{AllaireGutierrez}}
\label{Hmesure:theoreme1}
Let $u_{\varepsilon}$ be a sequence which weakly converges to $0$ in $\sL^2(\R^N)^p$. There exists a subsequence and a $H$-measure $\mu$ such that
$$\lim_{\varepsilon\rightarrow 0} \int_{\R^N} q(u_{\varepsilon}).\bar{u}_{\varepsilon} =\int_{\R^N} \int_{\mathbb S^{N-1}} \sum_{i,j=1}^p q_{ij}(x,\xi) \mu_{ij}(dx,d\xi)$$
for any polyhomogeneous pseudo-differential operator $q$ of degree $0$ with symbol $(q_{ij}(x,\xi))$.
\end{theorem}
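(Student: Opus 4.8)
The plan is to carry out the Tartar and G\'erard construction of the H-measure. Writing $Q_{ij}$ for the scalar operator with symbol $q_{ij}$, one has $\int_{\R^N} q(u_\varepsilon)\cdot\overline{u_\varepsilon}=\sum_{i,j=1}^p\int_{\R^N}Q_{ij}(u_\varepsilon^j)\,\overline{u_\varepsilon^i}$, so it suffices to treat each entry, and by approximation it suffices to treat symbols of the tensor form $a(x)\phi(\xi/|\xi|)$ with $a\in C_0(\R^N)$ and $\phi\in C(\mathbb{S}^{N-1})$; the general polyhomogeneous degree-zero symbol is recovered at the very end by density. For such a symbol the candidate functional is
$$
\mu_{ij}(a\otimes\phi):=\lim_{\varepsilon\to0}\int_{\R^N}S_\phi(a\,u_\varepsilon^j)\,\overline{u_\varepsilon^i}\,,
$$
where $S_\phi$ denotes the Fourier multiplier $\widehat{S_\phi w}(\xi)=\phi(\xi/|\xi|)\,\hat w(\xi)$. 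I would first fix a countable family of pairs $(a,\phi)$ dense in $C_0(\R^N)\times C(\mathbb{S}^{N-1})$ and extract, by a diagonal argument, a subsequence along which all the limits above exist simultaneously; the uniform estimate $\big|\int_{\R^N}S_\phi(a u_\varepsilon^j)\overline{u_\varepsilon^i}\big|\le\|\phi\|_\infty\|a\|_\infty\sup_\varepsilon\|u_\varepsilon\|_{\sL^2}^2$ (Plancherel for $S_\phi$) then shows that the limit functional is bounded and extends to all $(a,\phi)$.

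The technical heart is Tartar's first commutation lemma: if $a\in C_0(\R^N)$ and $\phi\in C(\mathbb{S}^{N-1})$, then the commutator $[M_a,S_\phi]=M_aS_\phi-S_\phi M_a$ is a compact operator on $\sL^2(\R^N)$, where $M_a$ is multiplication by $a$. I would prove it first for smooth $\phi$, where $[M_a,S_\phi]$ is a pseudo-differential operator of order $-1$ and hence compact, and then pass to continuous $\phi$ by uniform approximation together with the bound $\|S_\phi\|\le\|\phi\|_\infty$. Granting this lemma, the genuine operator $Q_{a\phi}$ agrees with $M_aS_\phi$ (equivalently $S_\phi M_a$) modulo a compact operator, which is annihilated in the limit by $u_\varepsilon\rightharpoonup0$; this both identifies $\lim_\varepsilon\int Q_{a\phi}(u_\varepsilon^j)\overline{u_\varepsilon^i}$ with $\mu_{ij}(a\otimes\phi)$ and shows that the functional is insensitive to how the multiplication $a$ is distributed around $S_\phi$. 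This commutation lemma is the step I expect to be the main obstacle, since it carries all the harmonic-analytic weight of the theorem.

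The remaining steps are structural. Hermitian symmetry $\mu_{ji}=\overline{\mu_{ij}}$ follows by conjugating the defining integral and using $S_\phi^\ast=S_{\overline{\phi}}$. For nonnegativity, take $\phi\ge0$ and $a\ge0$, write $\phi=\rho^2$ and $a=b^2$ with $\rho,b\ge0$ continuous; since Fourier multipliers compose by multiplying symbols, $S_\rho^\ast S_\rho=S_{\rho^2}=S_\phi$, and moving one factor $b$ across $S_\phi$ by the commutation lemma gives $\mu_{ii}(a\otimes\phi)=\lim_{\varepsilon}\|S_\rho(b\,u_\varepsilon^i)\|_{\sL^2}^2\ge0$; polarizing in the index $i$ shows that $(\mu_{ij}(a\otimes\phi))_{i,j}$ is a nonnegative Hermitian matrix whenever $a,\phi\ge0$. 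Since finite sums of tensor products $a\otimes\phi$ are dense in $C_0(\R^N\times\mathbb{S}^{N-1})$ by Stone--Weierstrass, the uniform bound above extends each $\mu_{ij}$ to a bounded linear functional on $C_0(\R^N\times\mathbb{S}^{N-1})$, and the Riesz representation theorem in its nonnegative, matrix-valued form produces the complex Radon measures $\mu_{ij}(dx,d\xi)$. Finally, an arbitrary polyhomogeneous degree-zero symbol $q$ is approximated uniformly on $\R^N\times\mathbb{S}^{N-1}$ by finite sums of tensor products; using the commutation lemma once more to control the resulting error operators and passing to the limit yields the asserted identity for general $q$, completing the proof.
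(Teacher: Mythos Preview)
The paper does not prove this theorem at all: it is quoted verbatim from \cite{AllaireGutierrez} (as Theorem 2-2 there) and used as a black box in the proof of Theorem~\ref{fonctionnelle:relaxee}. There is therefore no ``paper's own proof'' to compare against.

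That said, your proposal is the standard Tartar--G\'erard construction and is correct in outline. The reduction to tensor symbols $a(x)\phi(\xi/|\xi|)$, the diagonal extraction, the use of the first commutation lemma to kill compact remainders against the weakly null sequence, the positivity argument via $\phi=\rho^2$, $a=b^2$, and the final appeal to Stone--Weierstrass plus Riesz representation are exactly the steps in Tartar \cite{Tartar} and G\'erard \cite{Gerard}. One small point: your sketch of the commutation lemma (``for smooth $\phi$, $[M_a,S_\phi]$ is a pseudo-differential operator of order $-1$'') tacitly assumes $a$ is also smooth; for merely continuous $a$ and $\phi$ you would approximate both and use the operator bounds $\|M_a\|\le\|a\|_\infty$, $\|S_\phi\|\le\|\phi\|_\infty$ to pass to the limit. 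This is routine but worth stating explicitly if you intend the proof to be self-contained.
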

We shall also use the following lemma due to Kohn and Tartar that deals with the special case of sequences of characteristic functions.
\begin{lemma}{\bf \cite{AllaireGutierrez}}
\label{Hmesure:theoreme2}
Let $\chi_{\varepsilon}$ be a sequence of characteristic functions that weakly-$*$ converges to some $\theta$ in $\sL^\infty(\Omega,[0,1])$. Then the corresponding $H$-measure $\mu$ for the sequence $(\chi_{\varepsilon}-\theta)$ is necessarily of the type
$$\mu(dx,d\xi)= \theta(x) (1-\theta(x)) \nu(dx,d\xi),$$
where, for a given $x$, the measure $\nu(dx,d\xi)$ is a probability measure with respect to $\xi$.

Conversely, for any such probability measure $\nu\in \mathcal{P}(\Omega,\mathbb S^{N-1})$, there exists a sequence $\chi_{\varepsilon}$ of characteristic functions which weakly-$*$ converges to $\theta \in \sL^\infty(\Omega,[0,1])$ such that $\theta(1-\theta)\nu$ is the $H$-measure of $(\chi_{\varepsilon}-\theta)$.
\end{lemma}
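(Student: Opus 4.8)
The plan is to prove the two implications separately, the whole argument hinging on the one algebraic feature that singles out characteristic functions among bounded sequences, namely the idempotency $\chi_{\varepsilon}^2=\chi_{\varepsilon}$.

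\textbf{Necessity.} Set $w_{\varepsilon}=\chi_{\varepsilon}-\theta$, so that $w_{\varepsilon}\rightharpoonup 0$ weakly-$*$ in $\sL^{\infty}(\Omega)$ and, after extracting a subsequence, its $H$-measure $\mu$ exists. The starting point is the elementary identity
$$w_{\varepsilon}^2=\chi_{\varepsilon}^2-2\theta\chi_{\varepsilon}+\theta^2=\chi_{\varepsilon}-2\theta\chi_{\varepsilon}+\theta^2,$$
which, on passing to the weak-$*$ limit (using $\chi_{\varepsilon}\rightharpoonup^{*}\theta$ and that $\theta$ is a fixed multiplier), gives $w_{\varepsilon}^2\rightharpoonup^{*}\theta-2\theta^2+\theta^2=\theta(1-\theta)$. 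I would then identify the projection of $\mu$ onto the $x$-variable with this defect density. Applying Theorem \ref{Hmesure:theoreme1} in the scalar case $p=1$ to the purely spatial symbol $q(x,\xi)=\varphi(x)$, $\varphi\in\sC_c(\Omega)$ (a degree-zero operator, namely multiplication by $\varphi$), yields
$$\int_{\R^N}\varphi\,|w_{\varepsilon}|^2\longrightarrow\int_{\R^N}\int_{\mathbb S^{N-1}}\varphi(x)\,\mu(dx,d\xi).$$
Since the left-hand side also converges to $\int_{\R^N}\varphi\,\theta(1-\theta)\,dx$, and $\varphi$ is arbitrary, the $\xi$-marginal of $\mu$ equals the absolutely continuous measure $\theta(1-\theta)\,dx$. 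Disintegrating the nonnegative Radon measure $\mu$ against this marginal then produces probability measures $\nu_x$ on $\mathbb S^{N-1}$, defined for a.e.\ $x$, with $\mu(dx,d\xi)=\theta(x)(1-\theta(x))\,dx\,\nu_x(d\xi)$; setting $\nu(dx,d\xi)=dx\,\nu_x(d\xi)$ gives exactly the claimed factorization, with no mass carried where $\theta(1-\theta)=0$.

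\textbf{Sufficiency.} Here I would argue by explicit construction and approximation. The building block is the plane-wave oscillation in a fixed direction $e\in\mathbb S^{N-1}$: given a constant $\theta_0\in[0,1]$, let $Y$ be the $1$-periodic $\{0,1\}$-valued function of mean $\theta_0$ and put $\chi_{\varepsilon}(x)=Y(e\cdot x/\varepsilon)$. A direct Fourier-series computation, combined with the localization formula for $H$-measures, shows that $\chi_{\varepsilon}\rightharpoonup^{*}\theta_0$ and that the $H$-measure of $\chi_{\varepsilon}-\theta_0$ is $\theta_0(1-\theta_0)\,dx\,\delta_{e}(d\xi)$ (symmetrized in $\xi\mapsto-\xi$, as is forced for real sequences), the total $\xi$-mass $\theta_0(1-\theta_0)$ being delivered by Parseval, i.e.\ by the variance of $Y$. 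To realize a finitely supported law $\nu_x=\sum_j p_j\,\delta_{e_j}$ over a region where $\theta$ is piecewise constant, I would partition $\Omega$ into small cubes and, inside each cube, superimpose such single-direction blocks on disjoint subregions occupying the prescribed volume fractions $p_j$; the additivity of $H$-measures with respect to disjoint spatial supports then produces $\theta(1-\theta)\sum_j p_j\,\delta_{e_j}$ in the limit. Finally, a general pair $(\theta,\nu)$ is approximated by data that are piecewise constant in $x$ and finitely supported in $\xi$, and a diagonal extraction yields one sequence $\chi_{\varepsilon}$ with the desired weak-$*$ limit and $H$-measure.

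The hard part will be the sufficiency. Necessity is essentially the algebraic identity together with the marginal property of $H$-measures, but the converse demands the multi-scale construction above and a genuine diagonalization to pass from the dense class of piecewise-constant, finitely-supported data to arbitrary $(\theta,\nu)$. The delicate issue is to tie the spatial partition scale to the oscillation scale $\varepsilon$ so that the blocks neither interfere across cell boundaries nor corrupt the marginal $\theta(1-\theta)$; this is precisely where the metrizability of the $H$-measure topology on bounded sets and the closedness of the set of attainable $H$-measures are used decisively.
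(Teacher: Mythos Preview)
The paper does not give its own proof of this lemma: it is quoted verbatim from \cite{AllaireGutierrez} (and attributed there to Kohn and Tartar), with no argument supplied. So there is nothing in the paper to compare your proposal against.

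That said, your outline is the standard one and is essentially correct. For the necessity, the idempotency $\chi_{\varepsilon}^2=\chi_{\varepsilon}$ together with the identification of the $x$-marginal of the $H$-measure via the localization property (your use of Theorem~\ref{Hmesure:theoreme1} with a purely spatial symbol) is exactly the right mechanism; the disintegration step is then routine. For the sufficiency, the laminate construction $Y(e\cdot x/\varepsilon)$ producing the Dirac $H$-measure in the $\xi$-variable, followed by spatial patching and a diagonal extraction, is precisely the Kohn--Tartar argument. Two small points worth tightening: (i) for a real scalar sequence the $H$-measure is even in $\xi$, so the single-direction laminate actually yields $\theta_0(1-\theta_0)\,dx\,\tfrac12(\delta_e+\delta_{-e})$, which you note parenthetically but should state explicitly when matching against a prescribed $\nu$; (ii) in the patching step, ``superimposing blocks on disjoint subregions occupying volume fractions $p_j$'' is not quite right as written, since each subregion must still carry the \emph{same} weak-$*$ limit $\theta_0$ while contributing only its share $p_j$ to the directional part of $\nu$---the correct device is to laminate in direction $e_j$ on the $j$-th subregion with the same density $\theta_0$, and to let the subregion partition refine along with $\varepsilon$ so that the spatial indicator of each subregion is absorbed into the test function in the limit.
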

\begin{proofof}{Proposition \ref{fonctionnelle:relaxee}} Let $\theta \in {\overline{{\mathcal U}_{ad}}}$.  Let $\{\chi_n\}$ be a sequence in $\mathcal U_{ad}$ such that
\begin{equation}\label{cvchin}
 \chi_n\overset{\star}{\rightharpoonup}\theta\in{\overline{{\mathcal U}_{ad}}}.
\end{equation}
We then analyze the limit of
\[
 F(\chi_n)=\alpha \underbrace{\int_{\Omega}\chi_n|\nabla u_0|^2}_{A_n}+\alpha \, \varepsilon  \underbrace{\int_{\Omega}\chi_n\nabla v_n\cdot\nabla u_0}_{B_n},
\]
with $v_n:=v(\chi_n)\in\sH_0^1(\Omega)$ such that
\begin{eqnarray}
  -\alpha\Delta v_n-\lambda_0 v_n=\lambda_1(\chi_n)u_0+\dive(\alpha\chi_n\nabla u_0),\label{eqvn}\\
  \lambda_1(\chi_n)=\int_{\Omega}\alpha\chi_n|\nabla u_0|^2,\nonumber\\
  v_n\perp u_0\ \text{in}\ \sL^2(\Omega).\label{orto}
\end{eqnarray}
{\sc Step 1: } Passing to the limit in  $A_{n}$ is easy.  By the convergence (\ref{cvchin}), we have
\begin{equation}\label{lim1}
A_n=\lambda_1(\chi_n)\longrightarrow \alpha \int_{\Omega}\theta\,|\nabla u_0|^2=\lambda_1(\theta).
\end{equation}

\noindent 
{\sc Step 2: } Now we study the limit of the sequence $v_{n}$. By (\ref{orto}), we know that
\begin{equation*}
\left(1-\frac{\lambda_0}{\lambda^1}\right)\int_{\Omega}|\nabla v_n|^2\le C,
\end{equation*}
using a similar estimation as (\ref{estimacion}). Then $\|v_n\|_{\sH_0^1}^2\le C $ and hence, 
\begin{equation*}
v_n\rightharpoonup v_{\infty}=v_{\infty}(\theta)\quad\text{weak-}\sH_0^1(\Omega)
\end{equation*}
up to a subsequence. Since $\sH_0^1(\Omega)$ is compactly embedded in $\sL^2(\Omega)$,
\begin{equation*}
v_n\longrightarrow v_{\infty}\quad{\text{in}}\ \sL^2(\Omega)
\end{equation*}
up to a subsequence. Therefore, we can pass to variational limit from (\ref{eqvn}) to obtain,
\begin{equation}\label{eqvinf}
-\alpha\Delta v_{\infty}-\lambda_0 v_{\infty}=\lambda_1(\theta)u_0+\dive(\alpha\,\theta\nabla u_0).
\end{equation}
Moreover, passing to the limit from (\ref{orto}), we have
\[
  v_{\infty}\perp u_0\ \text{in}\ \sL^2(\Omega),
\]
accordingly, since $\|u_0\|_{\sL^2}=1$, $v_{\infty}=v_{\infty}(\theta)$ is uniquely defined in (\ref{eqvinf}) and $v_{\infty}$ depends (linearly) only on $\theta$ and not on the convergent subsequence of $\{v_n\}$. \par

\noindent
{\sc Step 3: }
\noindent The main difficulty is to pass to the limit in $B_{n}$ which is quadratic with respect to $\chi_{n}$. 
First, we can rewrite $B_n$ as
\begin{equation}\label{eqbn}
B_n=\int_{\Omega}\chi_n\nabla w_n\cdot\nabla u_0+\int_{\Omega}\chi_n\nabla z_n\cdot\nabla u_0,
\end{equation}
$w_n,z_n\in\sH_0^1(\Omega)$ such that
\begin{equation}\label{eqwn}
-\alpha\Delta w_n=\lambda_0 v_n+\lambda_1(\chi_n)u_0,
\end{equation}
\begin{equation}\label{eqzn}
-\Delta z_n=\dive(\chi_n\nabla u_0).
\end{equation}
On the one hand, since
\begin{equation*}
\lambda_0 v_n+\lambda_1(\chi_n)u_0\longrightarrow\lambda_0 v_{\infty}+\lambda_1(\theta)u_0\quad \text{in}\ \sL^2(\Omega),
\end{equation*}
(\ref{eqwn}) implies
\begin{equation*}
w_n\longrightarrow w\quad\text{in}\ \sH_0^1(\Omega),
\end{equation*}
where $w\in\sH_0^1(\Omega)$ satisfies the equation
\begin{equation*}
-\alpha\Delta w=\lambda_0 v_{\infty}+\lambda_1(\theta)u_0
\end{equation*}
and, in consequence,
\begin{equation}\label{lim21}
\int_{\Omega}\chi_n\nabla w_n\cdot\nabla u_0\longrightarrow\int_{\Omega}\theta\,\nabla w\cdot\nabla u_0.
\end{equation}
The difficulty is now  to calculate the limit in the second term of $B_n$ in (\ref{eqbn}). We observe that $\dive\chi_n\nabla u_0\rightharpoonup \dive\theta\nabla u_0$ weakly in $\sH^{-1}(\Omega)$ and since $(-\Delta)^{-1}$ is a isomorphism from $\sH^{-1}(\Omega)$ into $\sH_0^1(\Omega)$, we get  $\sL^2$-weak convergence of $\nabla z_n$. However, this is not enough for passing to the limit in the second term of $B_n$ because, in the  product $\chi_n\nabla z_n$, both sequences $\chi_n$ and $\nabla z_n$ only converge weakly. For handling this convergence problem we use the results on $H$-convergence stated before. \par

\noindent
{\sc Step 4: }For simplicity if $\Omega$ is $\mathbb{R}^n$, in view of Theorem \ref{Hmesure:theoreme1} and Lemma \ref{Hmesure:theoreme2}, the limit of the second term in (\ref{eqbn})  becomes
\begin{equation*}
\lim_{n\to\infty}\int_{\R^N}\chi_n\nabla z_n\cdot\nabla
u_0=\int_{\R^N}\theta\, Q(\theta)\cdot\nabla
u_0-\int_{\R^N}\theta(1-\theta)M\nabla u_0\cdot\nabla u_0,
\end{equation*}
where the pseudo-differential operator $Q$ is defined in Lemma \ref{pseudo:ordre0} (in the appendix) and it's symbol has been calculated therein and, 
\[M=\int_{\mathbb{S}^{N-1}}\xi\otimes\xi\,\nu(x,\dif\xi),\]
$\nu=\nu(x,\xi)$ is a probability measure with respect to $\xi$ that depends on the sequence $\{\chi_n\}$ and $Q(\theta)=\nabla z$ with $z \in \sH_0^1(\Omega)$ verifies the equation
\begin{equation*}
-\Delta z=\dive (\theta\,\nabla u_0).
\end{equation*}
\noindent
{\sc Step 5: }
But we need to work on $\Omega$ bounded. To that end, we use a localization procedure. This argument proceeds as follows. Let $(\zeta_k)$ be a sequence of smooth compactly supported functions in $C_0^\infty(\R^N)$ such that supp$\,\zeta_k\subset\Omega$ for all $k$ and $\zeta_k$ converges to 1 strongly in $\sL^2(\Omega)$. Then the second term on the right hand side of (\ref{eqbn}) can be written as
\begin{equation}\label{rewritwithzk}
  \int_{\Omega}\chi_n\nabla z_n\cdot\nabla u_0=\int_{\R^N}\zeta_k\chi_n\nabla z_n\cdot\nabla u_0+\int_{\R^N}(1-\zeta_k)\chi_n\nabla z_n\cdot\nabla u_0.
\end{equation}

Note that the last term in (\ref{rewritwithzk}) converges to 0 uniformly with respect to $n$ when $k$ tends to infinity because $z_n$ is bounded in $\sH^1(\Omega)$. We now fix $k$ and consider another smooth compactly supported function $\psi_k\in C_0^{\infty}$ such that $\psi_k\equiv 1$ inside the support of $\zeta_k$. The first term on the right hand side of (\ref{rewritwithzk}) is thus equal to
\begin{equation}\label{lastintegral}
\int_\Omega\zeta_k(\psi_k\chi_n)\nabla(\psi_k z_n)\cdot\nabla u_0.
\end{equation}

Rewriting the equation (\ref{eqzn}) in $\R^N$ as
\[-\Delta (\psi_k z_n)-\Delta((1-\psi_k)z_n)=\dive(\psi_k\chi_n\nabla u_0)+\dive((1-\psi_k)\chi_n\nabla u_0),\]
we can show that the function $\psi_k z_n$ is the sum of $\tilde z_n,\check z_n$ on the support of $\zeta_k$ being $\tilde z_n,\check z_n$ solutions of the following equations in the whole space $\R^N$
\begin{align*}
  -\Delta\tilde z_n &= \dive\psi_k\chi_n\nabla u_0\quad\text{in}\ \R^N,\\
  \Delta\check z_n &= \dive z_n\nabla\psi_k+\nabla\psi_k\cdot(\chi_n\nabla u_0+\nabla z_n)\quad\text{in}\ \R^N.
\end{align*}
We then notice that
$$\dive\psi_k\chi_n\nabla u_0\rightharpoonup\dive\psi_k\theta\nabla u_0\quad \text{ weakly in } \sH^{-1}(\R^N)\ \text{and}$$
$$\dive z_n\nabla\psi_k+\nabla\psi_k\cdot(\chi_n\nabla u_0+\nabla z_n)\rightarrow\dive z\nabla\psi_k+\nabla\psi_k\cdot(\theta\nabla u_0+\nabla z)\quad\text{ strongly in } \sH^{-1}(\R^N)$$
since this last term clearly converges weak-$\sL^2(\Omega)$. Using the fact that $(-\Delta)^{-1}$ is an isomorphism from $\sH^{-1}(\R^N)$ into $H^1(\R^N)$, we thus have
\begin{equation*}
  \tilde z_n\rightharpoonup\tilde z\quad\text{weakly in }H^1(\R^N)
\end{equation*}
and
\begin{equation*}
  \check z_n\rightarrow\check z\quad\text{strongly in }H^1(\R^N)
\end{equation*}
where $\tilde z,\check z$ verify
\begin{align*}
  -\Delta\tilde z &= \dive\psi_k\theta\nabla u_0\quad\text{in}\ \R^N,\\
  \Delta\check z &= \dive z\nabla\psi_k+\nabla\psi_k\cdot(\theta\nabla u_0+\nabla z)\quad\text{in}\ \R^N.
\end{align*}
Obviously $z=\tilde z+\check z$ on the support of $\zeta_k$.\par
Now noting that the integral (\ref{lastintegral}) has close relationship with the formulation of the $H$-measures, we see that, as in the whole space case, $\nabla\tilde z_n$ depends linearly on $(\psi_k\chi_n)$ through the pseudo-differential operator $Q$ of symbol (\ref{symbol}). Therefore applying Theorem 2 of \cite{Gerard}, we conclude that the limit of the first term on the right hand side of (\ref{rewritwithzk}) is equal to
\begin{multline*}
  \lim_{n\to\infty}\int_{\R^N}\zeta_k(\psi_k\chi_n)\nabla(\check z_n+\tilde z_n)\cdot\nabla u_0 = \int_{\R^N}\zeta_k(\psi_k\theta)\nabla\check z\cdot\nabla u_0\\
  \quad+\lim_{n\to\infty}\int_{\R^N}\zeta_k(\psi_k\chi_n)\nabla\tilde z_n\cdot\nabla u_0\\
  =\int_{\R^N}\zeta_k(\psi_k\theta)\nabla\check z\cdot\nabla u_0+\int_{\R^N}\zeta_k(\psi_k\theta)\nabla\tilde z\cdot\nabla u_0-\int_{\R^N}\zeta_k\psi_k\theta(1-\theta)M\nabla u_0\cdot\nabla u_0\\
  =\int_{\Omega}\zeta_k\theta\nabla z\cdot\nabla u_0-\int_{\Omega}\zeta_k\theta(1-\theta)M\nabla u_0\cdot\nabla u_0.
\end{multline*}
Finally making $k$ tends to $\infty$, we obtain the desired bounded domain case.\par
We go back to the calculation of the limit in (\ref{eqbn}). Indeed, gathering the limit (\ref{lim21}) and limit calculated above, it follows that
\begin{equation}\label{lim2}
  \lim_{n\to\infty}B_n=\int_{\Omega}\theta\,\nabla v_{\infty}(\theta)\cdot\nabla u_0-\int_{\Omega}\theta(1-\theta)\int_{\mathbb{S}^{N-1}}(\xi\cdot\nabla
  u_0)^2\,\nu(\dif x,\dif\xi).
\end{equation}
From (\ref{lim1}) and (\ref{lim2}), finally one has
\begin{align*}
  \lim_{n\to\infty} F(\chi_n) &= \lim_{n\to\infty}A_n+\varepsilon\lim_{n\to\infty}B_n\\
  &= \frac 1{\alpha}\,\lambda_1(\theta)+\varepsilon\int_{\Omega}\theta\,\nabla v_{\infty}(\theta)\cdot\nabla u_0
  -\varepsilon\int_{\Omega}\theta(1-\theta)\int_{\mathbb{S}^{N-1}}(\xi\cdot\nabla u_0)^2\,\nu(\dif x,\dif\xi).
\end{align*}
\noindent
{\sc Step 6: }Now we calculate
\[
  \bar{F}(\theta)=\inf_{\nu} \lim F(\chi_n).
\]
To that end, we notice that
\begin{equation*}
\int_{\mathbb{S}^{N-1}}(\xi\cdot\nabla u_0)^2\,\nu(\dif x,\dif\xi)\le|\nabla u_0|^2(x)\quad a.e.\ x\in\Omega,
\end{equation*}
since $\nu$ is a probability measure with respect to $\xi\ a.e.\ x\in\Omega$. Moreover, this value is reached when we take the Dirac measure $\delta_{\nabla u_0(x)}$, i.e., when
\[\nu(x,\xi)=\delta_{\xi_x}\dif x,\quad \xi_x=\nabla u_0(x).\]
From the converse part of Lemma 2.3 in \cite{AllaireGutierrez}, the minimum for
\[\inf_{\nu} \lim F(\chi_n)\]
is also achieved. So, finally we can conclude
\begin{equation}\label{calfin}
\bar{F}(\theta)=\int_{\Omega}\theta\,(\nabla u_0+\varepsilon\nabla v_{\infty}(\theta))\cdot\nabla u_0-\varepsilon\int_{\Omega} \theta\,(1-\theta)\,|\nabla u_0|^2.
\end{equation}
Recall that $v_{\infty}=v_{\infty}(\theta)$ depends linearly on $\theta$.
\end{proofof}

\subsection{Optimality conditions for the relaxed problem.}

The relaxed functional $\bar F$ achieves it's minimum of $\overline{\mathcal U_{ad}}$ since it is lower-semicontinuous and the constraint set is compact for the weak-$*$ topology. We first investigate the differentiability properties of $\bar F$ in order to obtain optimality conditions for a minimizer of $\bar F$ on the compact convex set $\overline{\mathcal U_{ad}}$. 
\begin{proposition}\label{calcul:derivees:formulation:relaxee} The functional $\bar F$ is Fr\'echet differentiable of every order and we have the following expressions for the Gateaux derivatives of first and second order
\begin{equation}\label{calcul:gradient}
\bar F'(\theta)\varphi=\int_{\Omega}\Bigl[2\varepsilon(\nabla v_{\infty}(\theta)+\theta\nabla u_0)+(1-\varepsilon)\nabla u_0\Bigr]\cdot\nabla u_0\,\varphi.
\end{equation}
and
\begin{equation} \label{calcul:derivee:seconde}
\bar{F}''(\theta)(\varphi,\varphi)=2\varepsilon\int_{\Omega}(\nabla v_{\infty}(\varphi)+\varphi\nabla u_0)\cdot\nabla u_0\,\varphi.
\end{equation}
\end{proposition}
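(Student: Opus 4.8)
The plan is to exploit the fact, already recorded after \eqref{calfin}, that $v_{\infty}(\theta)$ depends \emph{linearly} on $\theta$. Indeed the right-hand side $\lambda_1(\theta)u_0+\dive(\alpha\theta\nabla u_0)$ of the equation defining $v_\infty$ is linear in $\theta$, it is $\sL^2$-orthogonal to $u_0$ (because $\int_\Omega\dive(\alpha\theta\nabla u_0)\,u_0=-\int_\Omega\alpha\theta|\nabla u_0|^2=-\lambda_1(\theta)$ cancels $\lambda_1(\theta)\int_\Omega u_0^2$), and $-\alpha\Delta-\lambda_0$ restricted to the orthogonal complement of $u_0$ is an isomorphism from $\sH_0^1(\Omega)\cap u_0^{\perp}$ onto $\sH^{-1}(\Omega)\cap u_0^{\perp}$; hence $\theta\mapsto v_\infty(\theta)$ extends to a bounded linear map from $\sL^\infty(\Omega)$ into $\sH_0^1(\Omega)$. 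Substituting into \eqref{calfin} and expanding $\theta(1-\theta)$, one rewrites
\[
\bar F(\theta)=(1-\varepsilon)\int_{\Omega}\theta\,|\nabla u_0|^2+\varepsilon\int_{\Omega}\theta^{2}\,|\nabla u_0|^2+\varepsilon\int_{\Omega}\theta\,\nabla v_{\infty}(\theta)\cdot\nabla u_0,
\]
so that $\bar F$ is a continuous polynomial of degree at most two on $\sL^\infty(\Omega)$: an affine term plus the two bounded quadratic forms $\theta\mapsto\int_\Omega\theta^2|\nabla u_0|^2$ and $\theta\mapsto\int_\Omega\theta\,\nabla v_\infty(\theta)\cdot\nabla u_0$. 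A continuous polynomial is Fr\'echet differentiable of every order, with all derivatives of order $\ge 3$ equal to zero, so only the first two derivatives remain to be computed.

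The next step is to compute $\bar F(\theta+t\varphi)$ and collect powers of $t$. Twice the coefficient of $t^2$ gives directly
$\bar F''(\theta)(\varphi,\varphi)=2\varepsilon\int_\Omega\varphi^2|\nabla u_0|^2+2\varepsilon\int_\Omega\varphi\,\nabla v_\infty(\varphi)\cdot\nabla u_0$, which is exactly \eqref{calcul:derivee:seconde} (and, as it should be for a quadratic functional, independent of $\theta$). The coefficient of $t$ gives the preliminary expression
\[
\bar F'(\theta)\varphi=(1-\varepsilon)\int_\Omega\varphi\,|\nabla u_0|^2+2\varepsilon\int_\Omega\theta\varphi\,|\nabla u_0|^2+\varepsilon\int_\Omega\bigl(\varphi\,\nabla v_\infty(\theta)+\theta\,\nabla v_\infty(\varphi)\bigr)\cdot\nabla u_0,
\]
which must still be matched with \eqref{calcul:gradient}.

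The only real obstacle is therefore the symmetry identity
$\int_\Omega\theta\,\nabla v_\infty(\varphi)\cdot\nabla u_0=\int_\Omega\varphi\,\nabla v_\infty(\theta)\cdot\nabla u_0$,
i.e. a self-adjointness of the linear map $v_\infty$ for the pairing $\langle\theta,\psi\rangle=\int_\Omega\theta\,\nabla\psi\cdot\nabla u_0$. To prove it I would set $w=v_\infty(\theta)$, $p=v_\infty(\varphi)$, use $p$ as test function in the equation for $w$ and $w$ as test function in the equation for $p$; since $w\perp u_0$ and $p\perp u_0$ in $\sL^2(\Omega)$ the terms $\lambda_1(\theta)\int_\Omega u_0 p$ and $\lambda_1(\varphi)\int_\Omega u_0 w$ vanish, and both testings reduce to the same symmetric quantity $\int_\Omega\alpha\nabla w\cdot\nabla p-\lambda_0\int_\Omega wp$, whence $\int_\Omega\alpha\theta\,\nabla u_0\cdot\nabla p=\int_\Omega\alpha\varphi\,\nabla u_0\cdot\nabla w$, i.e. the claimed identity. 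Using it, the cross term above becomes $2\varepsilon\int_\Omega\nabla v_\infty(\theta)\cdot\nabla u_0\,\varphi$, and gathering the three contributions yields precisely \eqref{calcul:gradient}. Finally, since $\bar F$ is a bounded polynomial the Gateaux derivatives so obtained are continuous (multi)linear maps, hence coincide with the Fr\'echet derivatives, which completes the proof.
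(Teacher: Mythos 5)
Your proposal is correct and follows essentially the same route as the paper: both arguments rest on the linearity of $\theta\mapsto v_{\infty}(\theta)$, which makes $\bar F$ a quadratic polynomial on $\sL^{\infty}(\Omega)$ (hence Fr\'echet differentiable of every order), and both use the weak formulation of the equation for $v_{\infty}$, tested against the other $v_{\infty}$, to symmetrize the cross term. The only organizational difference is that the paper substitutes $v_{\infty}(\theta)$ as test function to rewrite $\bar F$ in diagonal form before differentiating and then converts back, whereas you expand $\bar F(\theta+t\varphi)$ first and prove the self-adjointness identity $\int_{\Omega}\theta\,\nabla v_{\infty}(\varphi)\cdot\nabla u_0=\int_{\Omega}\varphi\,\nabla v_{\infty}(\theta)\cdot\nabla u_0$ explicitly (also spelling out the Fredholm compatibility that the paper leaves implicit); the computations are equivalent.
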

\begin{proof} The linearity of the application $\theta \mapsto v_\infty(\theta)$ and the expression for $\bar F$ show clearly that it is quadratic with respect to $\theta$. So, the Fr\'echet derivatives exist. In order to calculate the first order derivative, we rewrite (\ref{calfin}) as
$$\bar F(\theta)=\varepsilon\int_{\Omega}\theta \nabla v_{\infty}(\theta)\cdot\nabla u_0+\varepsilon\int_{\Omega}\theta^2|\nabla u_0|^2+(1-\varepsilon)\int_\Omega\theta|\nabla u_0|^2.$$
But, using $v_{\infty}(\theta)$ as test function in (\ref{eqvinf}), we get
\begin{equation*}
 \bar F(\theta)= -\varepsilon\int_\Omega|\nabla v_\infty(\theta)|^2-\frac{\lambda_0}{\alpha}v_{\infty}^2(\theta)+\varepsilon\int_{\Omega}\theta^2|\nabla u_0|^2+(1-\varepsilon)\int_\Omega\theta|\nabla u_0|^2.
\end{equation*}
A simple calculation gives us
\begin{equation*}
  \bar F'(\theta)\varphi=-2\varepsilon\int_\Omega\nabla v_\infty(\theta)\cdot\nabla v_\infty(\varphi)-\frac{\lambda_0}{\alpha}v_{\infty}(\theta)v_{\infty}(\varphi)+2\varepsilon\int_{\Omega} \theta|\nabla u_0|^2\varphi+(1-\varepsilon)\int_\Omega|\nabla u_0|^2\varphi.
\end{equation*}
We now notice that $v_\infty(\varphi)$ satisfies (\ref{eqvinf}). Then, again taking $v_\infty(\theta)$ as test function, we can explicitly write the above expression in terms of $\varphi$ to obtain \eqref{calcul:gradient} then \eqref{calcul:derivee:seconde}.
\end{proof}
We wish to investigate the critical points for the constrained minimization problem  of minimizing $\bar F$ over $\overline{\mathcal U_{ad}}$. To that end, we use the Lagrange's multipliers method with the constraint
$$C(\theta):=\int_\Omega\theta=m,\quad\theta\in\overline{\mathcal U_{ad}} \text{  hence  }
C'(\theta)\varphi=\int_\Omega\varphi.$$
Therefore, the critical points satisfy the Euler-Lagrange equation: for all admissible $\varphi$
\begin{equation*}
  [\bar F'(\theta)+\Lambda C'(\theta)]\varphi=0
\end{equation*}
for some $\Lambda\in\R$; i.e.
\begin{equation*}
  \int_{\Omega}\Bigl[2\varepsilon(\nabla v_{\infty}(\theta)+\theta\nabla u_0)+(1-\varepsilon)\nabla u_0\Bigr]\cdot\nabla u_0\,\varphi+\Lambda\int_\Omega\varphi=0\quad\forall\varphi.
\end{equation*}
Consequently the density of $\overline{\mathcal U_{ad}}$ in $\sL^2(\Omega)$ implies
\begin{equation*}
  2\varepsilon\nabla v_{\infty}(\theta)\cdot\nabla u_0+(2\varepsilon\theta+1-\varepsilon)|\nabla u_0|^2=\Lambda\quad\text{on}\ \Omega.
\end{equation*}
\begin{proposition}
If $\theta^*$ is  optimal in the relaxed formulation, then there is real $\Lambda$ such that:
\begin{equation*}
  2\varepsilon\nabla v_{\infty}(\theta)\cdot\nabla u_0+(2\varepsilon\theta+1-\varepsilon)|\nabla u_0|^2=\Lambda\quad\text{in}\ \Omega.
\end{equation*}
\end{proposition}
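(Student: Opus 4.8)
The plan is to combine the differentiability of $\bar F$ from Proposition~\ref{calcul:derivees:formulation:relaxee} with the first-order optimality condition on the convex set $\overline{\mathcal U_{ad}}$, and then to eliminate the volume constraint via a Lagrange multiplier. First I would recall that $\overline{\mathcal U_{ad}}=\{\theta\in\sL^{\infty}(\Omega)\,;\,0\le\theta\le1,\ \int_{\Omega}\theta=m\}$ is convex. Hence, if $\theta^{*}$ minimizes $\bar F$ over $\overline{\mathcal U_{ad}}$, then for every $\eta\in\overline{\mathcal U_{ad}}$ the segment $t\mapsto\theta^{*}+t(\eta-\theta^{*})$, $t\in[0,1]$, remains admissible and $t\mapsto\bar F(\theta^{*}+t(\eta-\theta^{*}))$ attains its minimum at $t=0$, so $\bar F'(\theta^{*})(\eta-\theta^{*})\ge0$.

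Next I would isolate the volume constraint by working with zero-mean variations. Set
\[
g:=2\varepsilon\bigl(\nabla v_{\infty}(\theta^{*})+\theta^{*}\nabla u_{0}\bigr)\cdot\nabla u_{0}+(1-\varepsilon)|\nabla u_{0}|^{2}=2\varepsilon\nabla v_{\infty}(\theta^{*})\cdot\nabla u_{0}+(2\varepsilon\theta^{*}+1-\varepsilon)|\nabla u_{0}|^{2},
\]
so that $\bar F'(\theta^{*})\varphi=\int_{\Omega}g\,\varphi$ by the formula \eqref{calcul:gradient}. If $\varphi\in\sL^{\infty}(\Omega)$ has $\int_{\Omega}\varphi=0$ and is such that $\theta^{*}+t\varphi\in\overline{\mathcal U_{ad}}$ for all small $|t|$ — for instance any bounded zero-mean $\varphi$ supported where $\theta^{*}$ is strictly between $0$ and $1$ — then both $+\varphi$ and $-\varphi$ are admissible directions, whence $\int_{\Omega}g\,\varphi=0$. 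Since such variations are dense among the zero-mean functions of $\sL^{2}(\Omega)$ on the region where the design is a genuine mixture, $g$ is $\sL^{2}$-orthogonal to all zero-mean functions and is therefore a.e.\ equal to a constant $\Lambda$; rewriting $g=\Lambda$ and using the notation $\theta=\theta^{*}$ of the statement gives precisely $2\varepsilon\nabla v_{\infty}(\theta)\cdot\nabla u_{0}+(2\varepsilon\theta+1-\varepsilon)|\nabla u_{0}|^{2}=\Lambda$ in $\Omega$. Equivalently, one may phrase this through the Lagrangian: there is $\Lambda\in\R$ with $[\bar F'(\theta^{*})+\Lambda\,C'(\theta^{*})]\varphi=0$ for every admissible $\varphi$, where $C(\theta)=\int_{\Omega}\theta$, and then conclude by density of the admissible variations in $\sL^{2}(\Omega)$.

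The step that requires some care — and which is the main, though mild, obstacle — is the interaction with the pointwise box constraint $0\le\theta^{*}\le1$. On the set $\{0<\theta^{*}<1\}$ two-sided volume-preserving variations are available, which yields the equality $g=\Lambda$ there; on $\{\theta^{*}=0\}$ and $\{\theta^{*}=1\}$ only one-sided variations are admissible, so one obtains only $g\ge\Lambda$, respectively $g\le\Lambda$. Thus the clean pointwise identity of the proposition is to be understood on the region where $\theta^{*}$ is genuinely a mixture (the regime of interest here), or one restricts a priori to minimizers with $0<\theta^{*}<1$ a.e., in which case the identity holds throughout $\Omega$ without reservation. I would flag this caveat explicitly, as the discussion preceding the statement does implicitly.
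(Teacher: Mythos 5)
Your argument is correct and follows essentially the same route as the paper: the gradient formula \eqref{calcul:gradient}, the first-order optimality condition on the convex set $\overline{\mathcal U_{ad}}$, and a Lagrange multiplier $\Lambda$ for the volume constraint, concluding by the richness of admissible variations. Your caveat about the box constraint is well taken — the paper's own derivation treats only the equality constraint and invokes density, so strictly speaking the clean identity is justified on $\{0<\theta^*<1\}$, with the one-sided conditions $g\ge\Lambda$ on $\{\theta^*=0\}$ and $g\le\Lambda$ on $\{\theta^*=1\}$ on the pure phases.
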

Integrating over $\Omega$ and considering $u_0$ as test function in (\ref{eqvinf}), we get the following consequence
\begin{equation*}
  \int_\Omega\nabla v_{\infty}(\theta)\cdot\nabla u_0=0 \text{ and }
  2\varepsilon\int_\Omega\theta|\nabla u_0|^2+\frac{1-\varepsilon}{\alpha}\,\lambda_0=\Lambda|\Omega|.
\end{equation*}

\section{Numerical illustrations}
\setcounter{equation}{0}
\setcounter{theorem}{0}

In this section we shall illustrate the behavior of the solution of the appoximated problem through numerical simulations. To that end, we place ourselves under assumption of low contrast regime, i.e. $\beta=\alpha(1+\varepsilon)$ for small $\varepsilon$. In the following examples, we will consider $\varepsilon=0.1$ and $\varepsilon=10^{-6}$.

We use an optimization algorithm to minimize $\bar F$: we have implemented a gradient-based steepest descend numerical algorithm for the local proportion $\theta$. At each step of the optimization algorithm, we update the local proportion with a step $\rho_i>0$ by
$$ \theta_i=\min(1,\max(0,\tilde{\theta}_i))\ \text{with}\ \tilde{\theta}_i=\theta_{i-1}-\rho_i\bar F'(\theta_{i-1})+\Lambda_i$$
 where $\Lambda_i$ is the Lagrange multiplier for the volume constraint. The Lagrange multipliers $\Lambda_i$ are approximated at each iteration by simple dichotomy in order to get the constraint $\int_\Omega\theta_i =m$ corresponding to a fixed proportion.

The optimization procedure is coupled with finite elements approximations of the boundary values problems needed to compute both $\bar F$ and its derivative $\bar F'$. To calculate the eigen-pair $(\lambda_0,u_0)$ and all the states $v_{i,\infty}$, we use $\mathbb{P}_2$ finite elements while the local proportions $\theta_i$ have been discretized with $\mathbb{P}_1$.

We will present examples in dimension two and three. The computations have been made with the FEM library FreeFem++ \cite{MR3043640}. The subsequent figures show the local proportion of the material with higher conductivity. We do a comparative analysis in dimension two and three for square and cube cases respectively confirming the mentioned properties in \cite{ConcaLaurainMahadevan} with respect to the distribution of the material with higher conductivity that depends on the shape of the domain $\Omega$.  The volume always refers to the percentage of volume occupied by the higher conductivity material.

\subsection{The square and the cube.}

\indent The computations are made on the unit square $[0,1]^2$ with a regular mesh of $80 000$ triangles. For a very small value of $\varepsilon$ here $10^{-6}$, we have obtained the optimal designs displayed into Figure \ref{square=e-6} for different volume proportions. The dark red region corresponds to $B$ and material $\beta$, the local proportion is then $1$. The blue region correspond to material $\alpha$, the local proportion is then $0$.

\begin{figure}[!htH]

\centering
\begin{subfigure}[b]{.2\textwidth}
 \includegraphics[width=\linewidth]{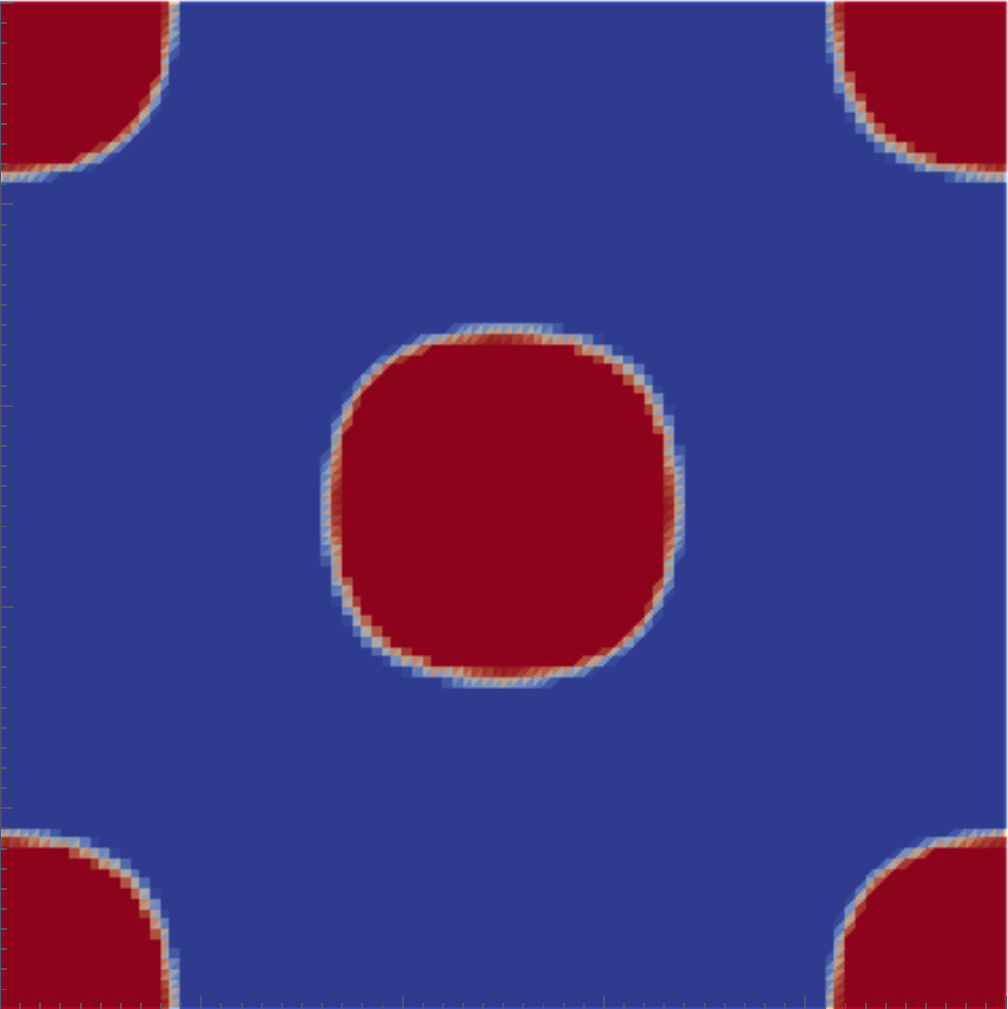}
 \caption{$m/|\Omega|=0.2$}
\end{subfigure}
\begin{subfigure}[b]{.2\textwidth}
 \includegraphics[width=\linewidth]{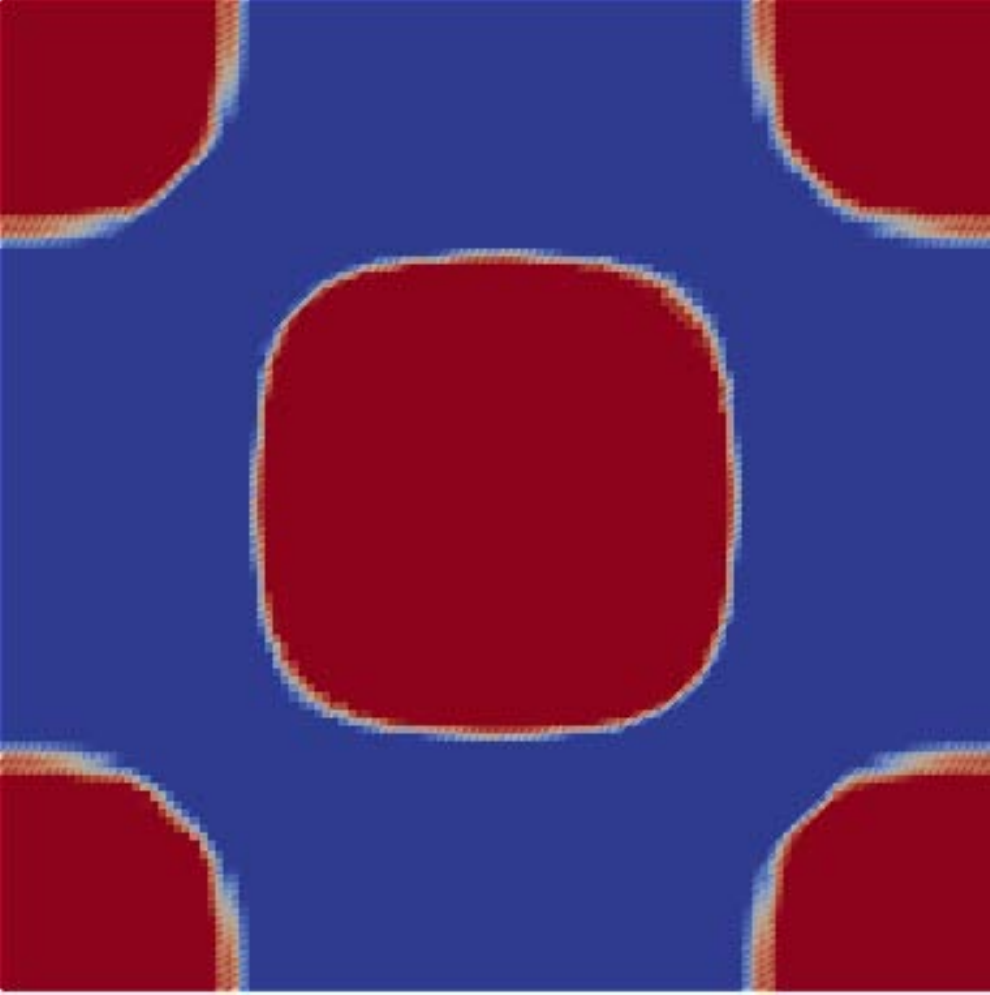}
 \caption{$m/|\Omega|=0.4$}
\end{subfigure}
\begin{subfigure}[b]{.2\textwidth}
 \includegraphics[width=\linewidth]{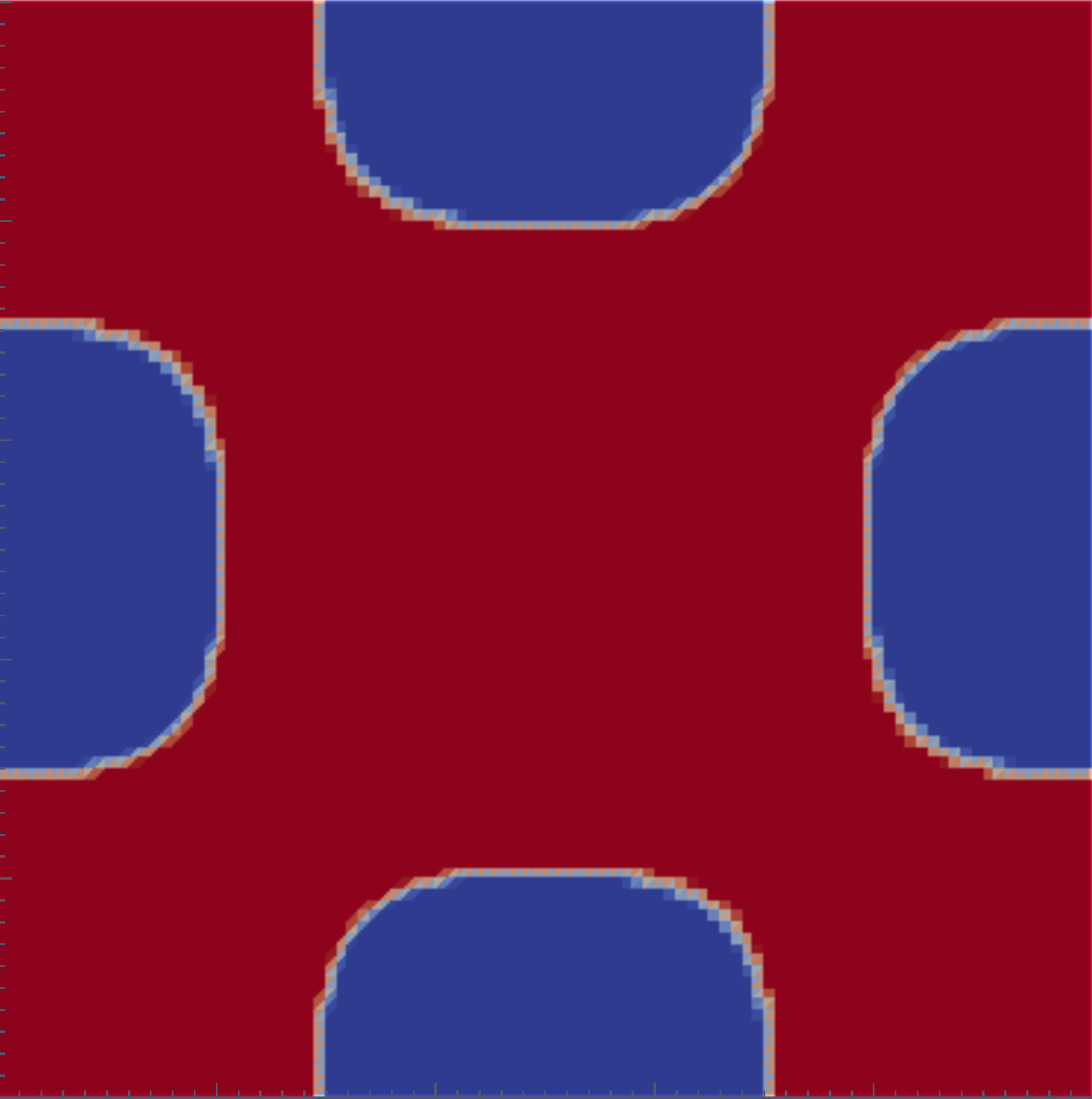}
 \caption{$m/|\Omega|=0.7$}
\end{subfigure}
\begin{subfigure}[b]{.2\textwidth}
 \includegraphics[width=\linewidth]{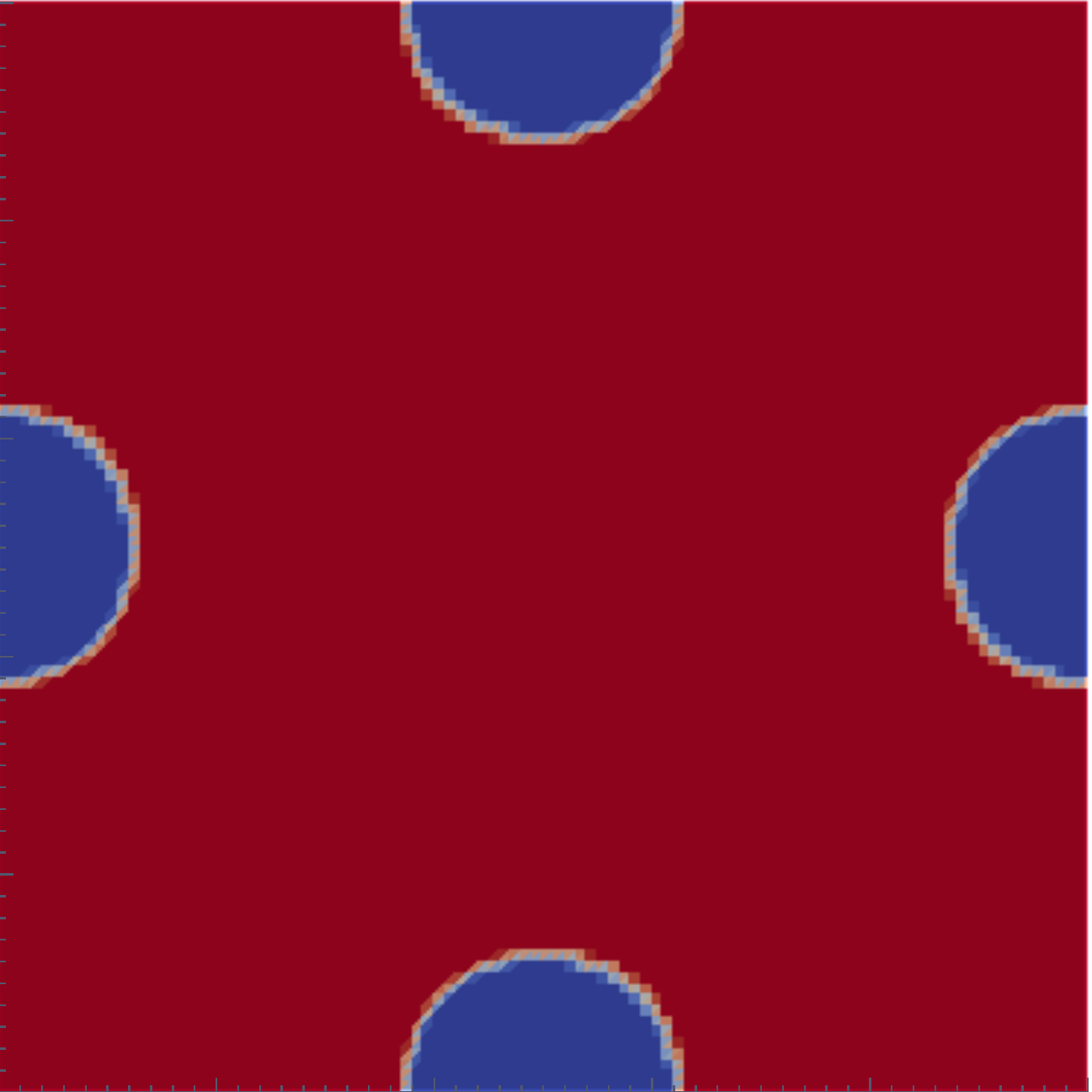}
 \caption{$m/|\Omega|=0.9$}
\end{subfigure}
\begin{subfigure}[b]{0.07\textwidth}
 \vfil
 \includegraphics[width=\linewidth]{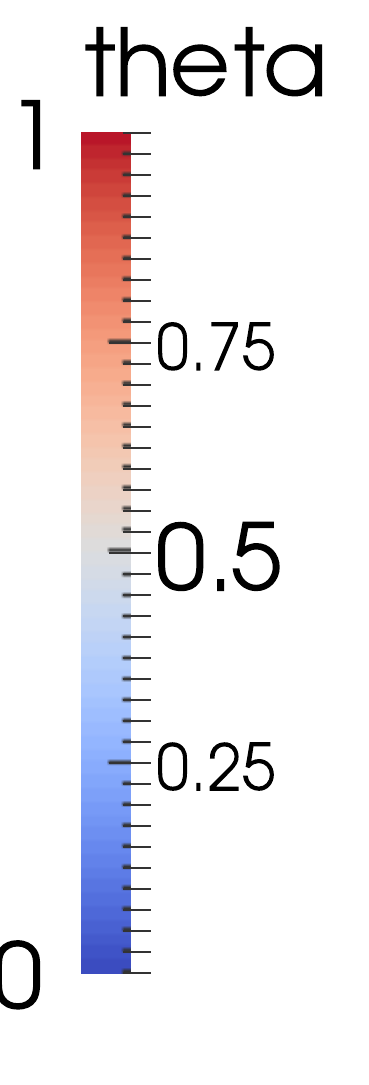}
 \\~
\end{subfigure}
\caption{Nearly optimal distribution $B$ in the square case for $\varepsilon=10^{-6}$ .}
\label{square=e-6}
\end{figure}

The numerically computed optimal region $B$ contains neighborhoods around corners and the center always is also included. Similar results were obtained by Conca, Laurain and Mahadevan in  \cite{ConcaLaurainMahadevan} with a first order approximation only. Nevertheless, the local proportion is very often either $0$ either $1$. Let us now consider the same cases with a much larger parameter $\varepsilon$. In Figure \ref{square=e-1}, we present the results obtained with $\varepsilon=0.1$. We observe that the mixture is much more important: there seems to be a pure material nowhere. We believe that the asymptotic is not reached for such a large value of $\varepsilon$.

\begin{figure}[!htH]

\centering
\begin{subfigure}[b]{.2\textwidth}
 \includegraphics[width=\linewidth]{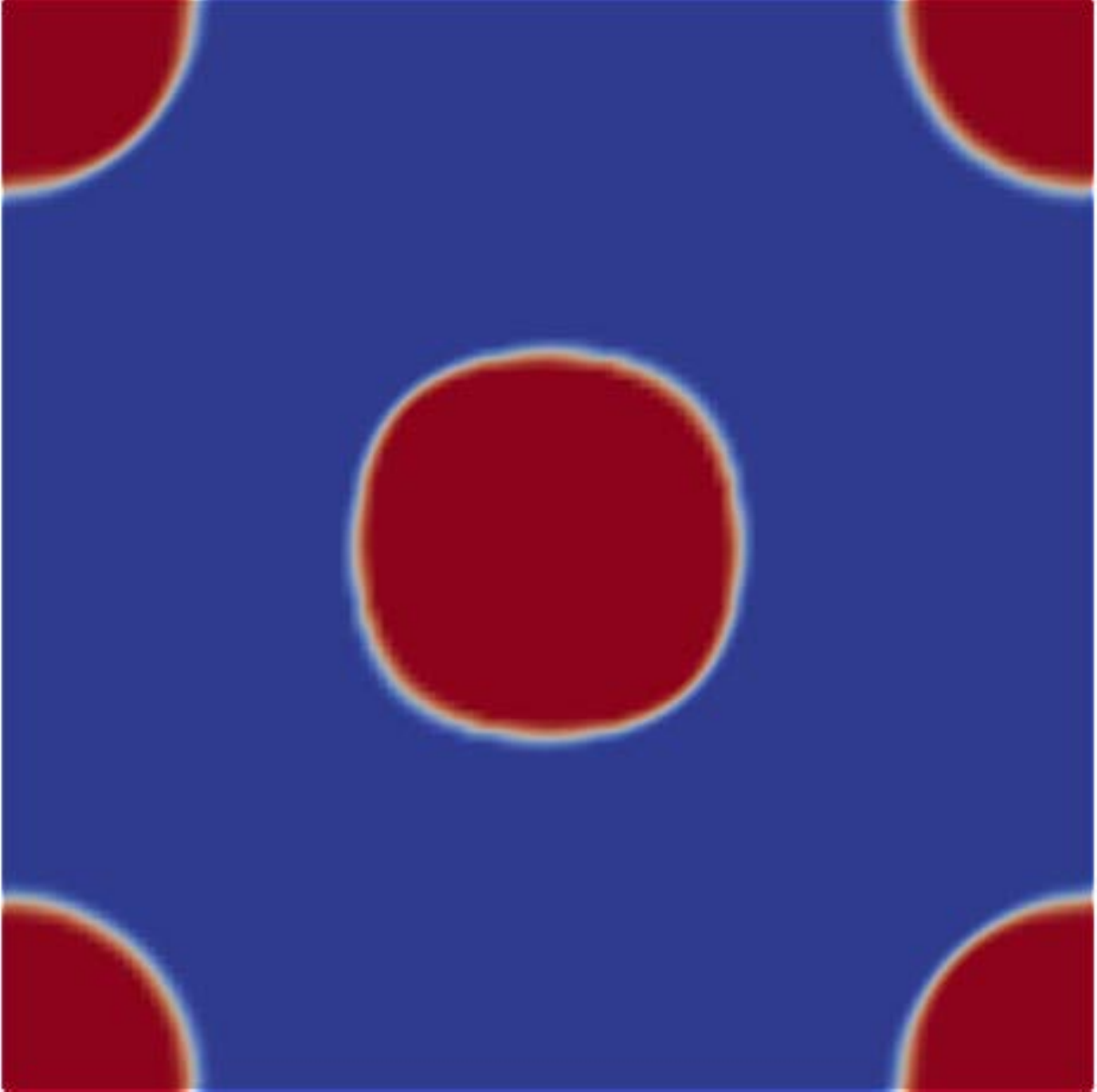}
 \caption{$m/|\Omega|=0.2$}
\end{subfigure}
\begin{subfigure}[b]{.2\textwidth}
 \includegraphics[width=\linewidth]{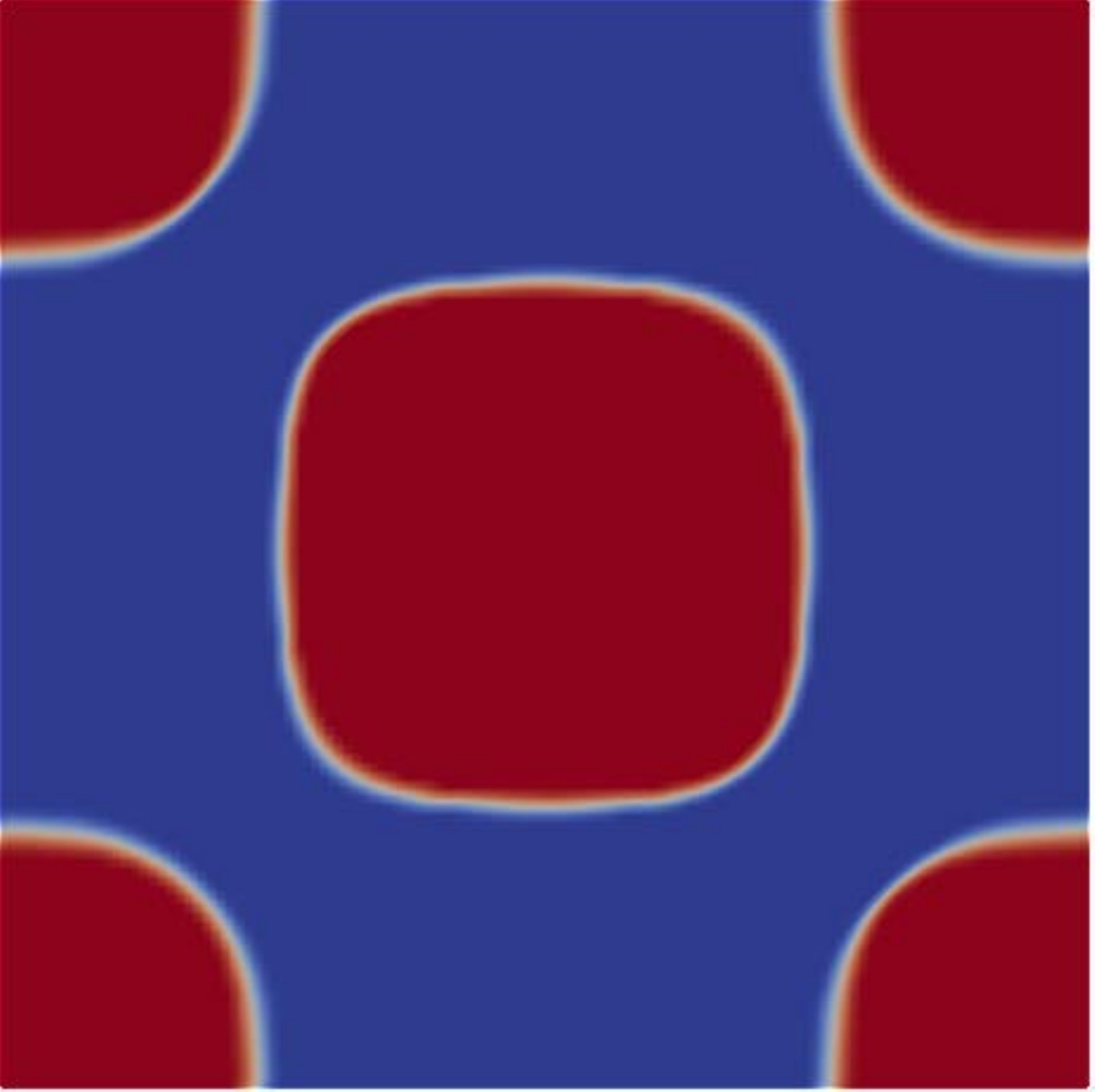}
 \caption{$m/|\Omega|=0.4$}
\end{subfigure}
\begin{subfigure}[b]{.2\textwidth}
 \includegraphics[width=\linewidth]{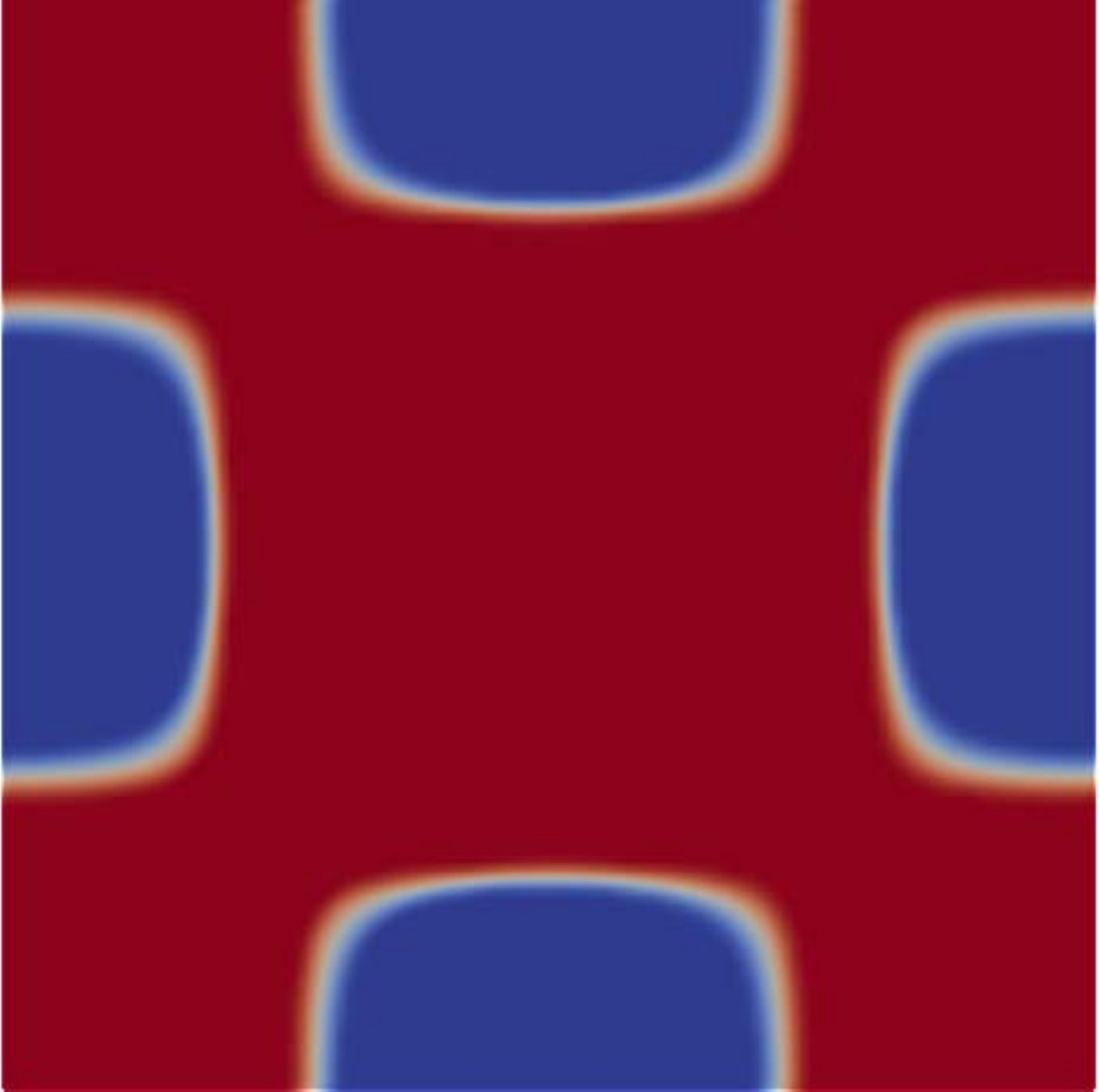}
 \caption{$m/|\Omega|=0.7$}
\end{subfigure}
\begin{subfigure}[b]{.2\textwidth}
 \includegraphics[width=\linewidth]{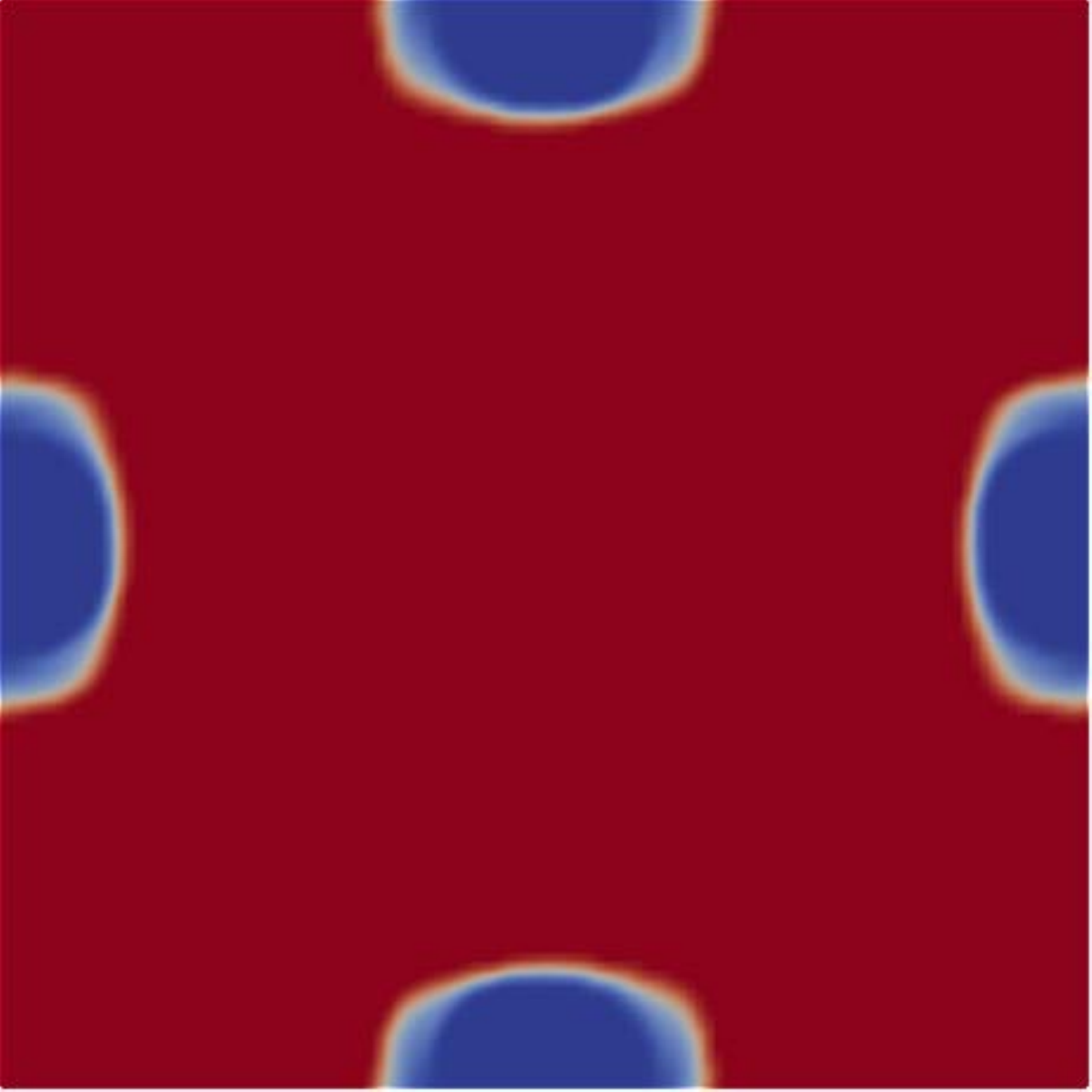}
 \caption{$m/|\Omega|=0.9$}
\end{subfigure}
\begin{subfigure}[b]{0.07\textwidth}
 \vfil
 \includegraphics[width=\linewidth]{colorbar}
 \\~
\end{subfigure}
\caption{Nearly optimal distribution $B$ in the square case for $\varepsilon=0.1$ .}
\label{square=e-1}
\end{figure}

Let us now present simulations on the unit cube $[0,1]^3$. For the visualisation, we have remove the phase where $\theta=0$. Since the computation have been made on a Laptop, the resolution is coarser in these simulations in dimension three, we kept the same numbers of degree of freedom.

\begin{figure}[!htH]

\centering
\begin{subfigure}[b]{.2\textwidth}
 \includegraphics[width=\linewidth]{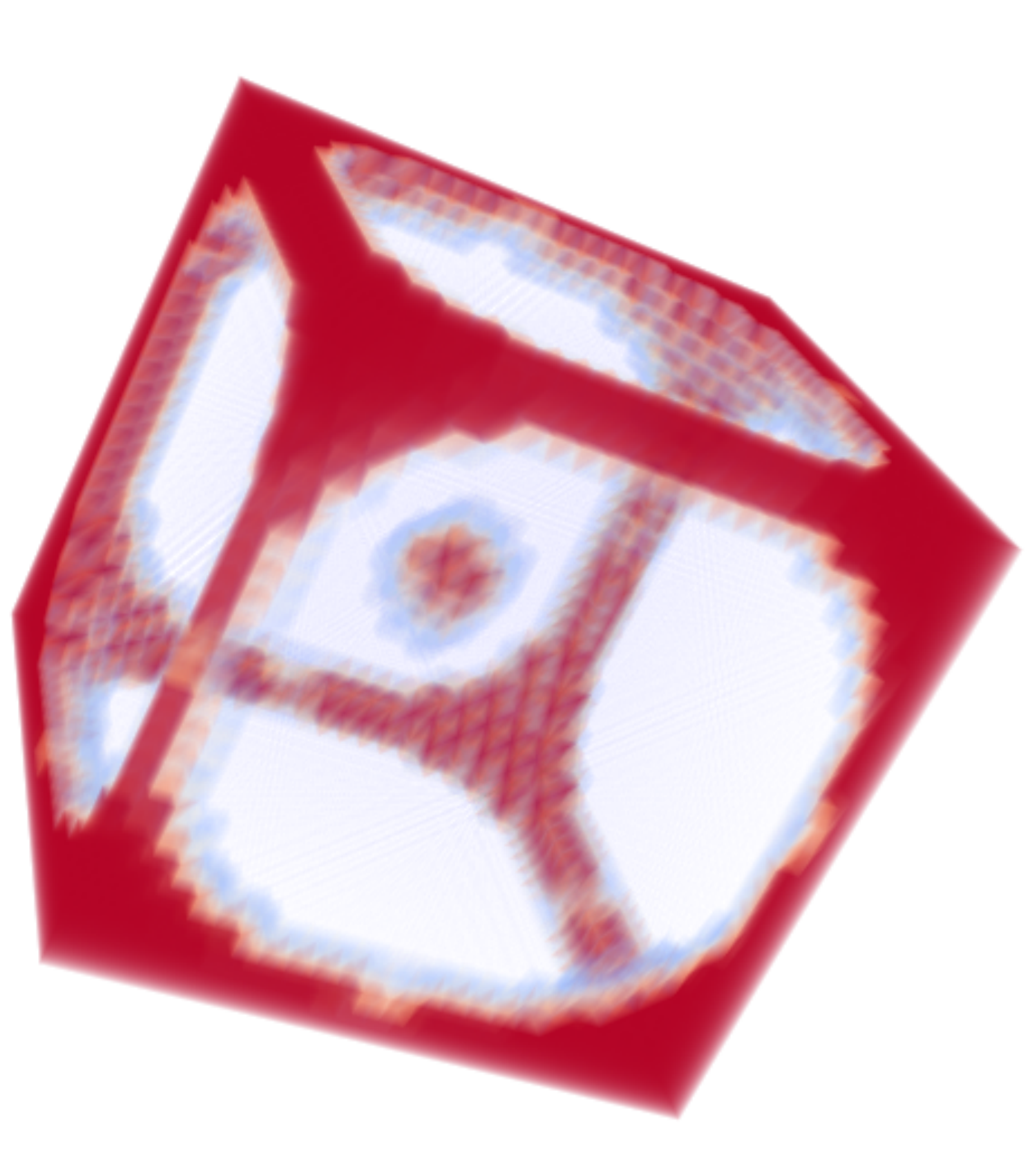}
 \caption{$m/|\Omega|=0.125$}
\end{subfigure}
\begin{subfigure}[b]{.2\textwidth}
 \includegraphics[width=\linewidth]{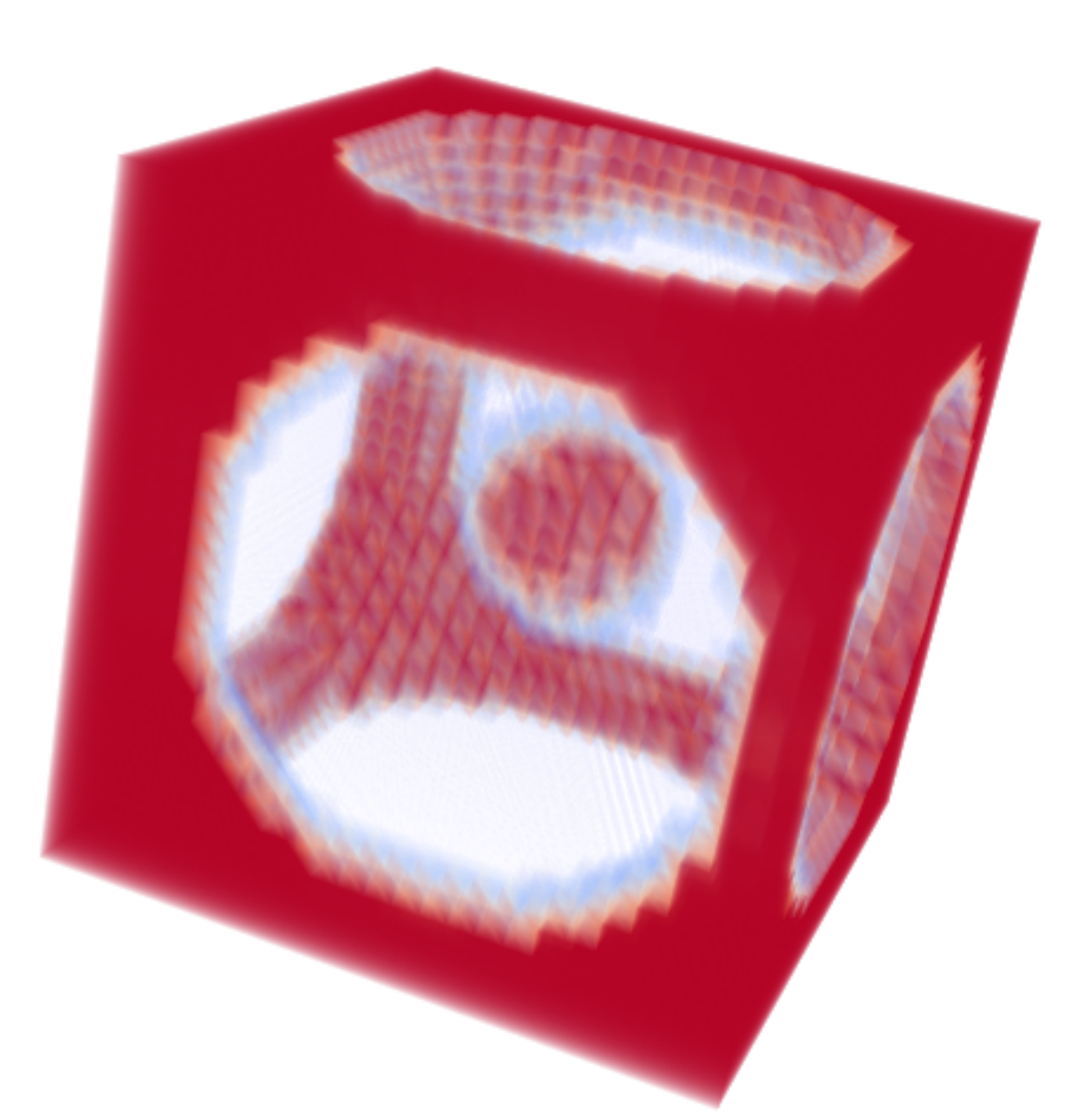}
 \caption{$m/|\Omega|=0.25$}
\end{subfigure}
\begin{subfigure}[b]{.2\textwidth}
 \includegraphics[width=\linewidth]{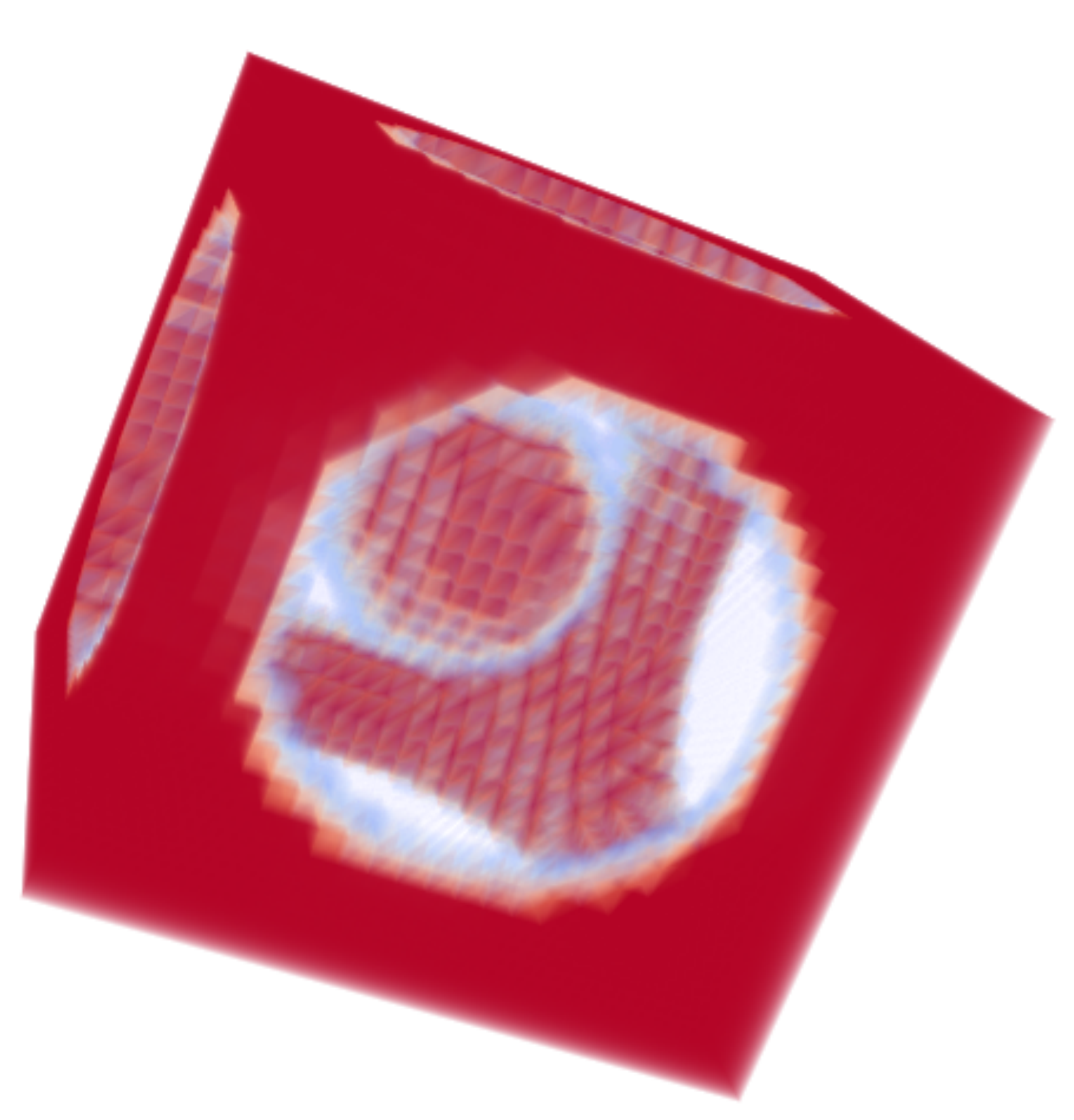}
 \caption{$m/|\Omega|=0.375$}
\end{subfigure}
\begin{subfigure}[b]{.2\textwidth}
 \includegraphics[width=\linewidth]{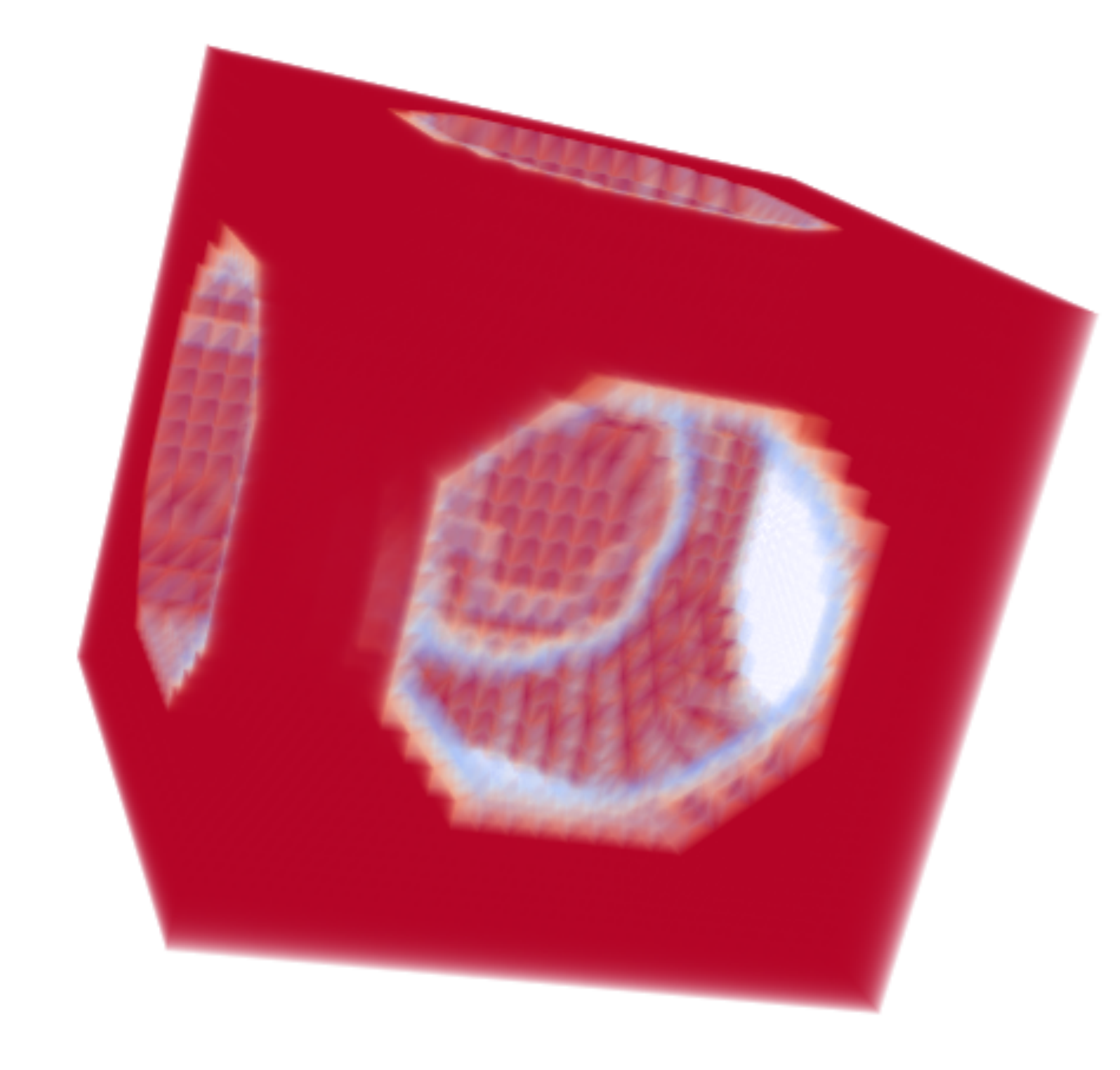}
 \caption{$m/|\Omega|=0.5$}
\end{subfigure}
\begin{subfigure}[b]{0.07\textwidth}
 \vfil
 \includegraphics[width=\linewidth]{colorbar}
 \\~
\end{subfigure}
\caption{Nearly optimal distribution $B$ in the cube case for $\varepsilon=10^{-6}$. }
\label{cube}

\end{figure}

\subsection{Others domains}

For the sake of completeness, we present computations in other plane domains for the comparison with \cite{ConcaLaurainMahadevan}: a crescent in Figure \ref{crescent=e-6} and a perforated ellipse in Figure \ref{ellipse=e-6}.

\begin{figure}[!htH]
\centering
\begin{subfigure}[b]{.2\textwidth}
 \includegraphics[width=\linewidth]{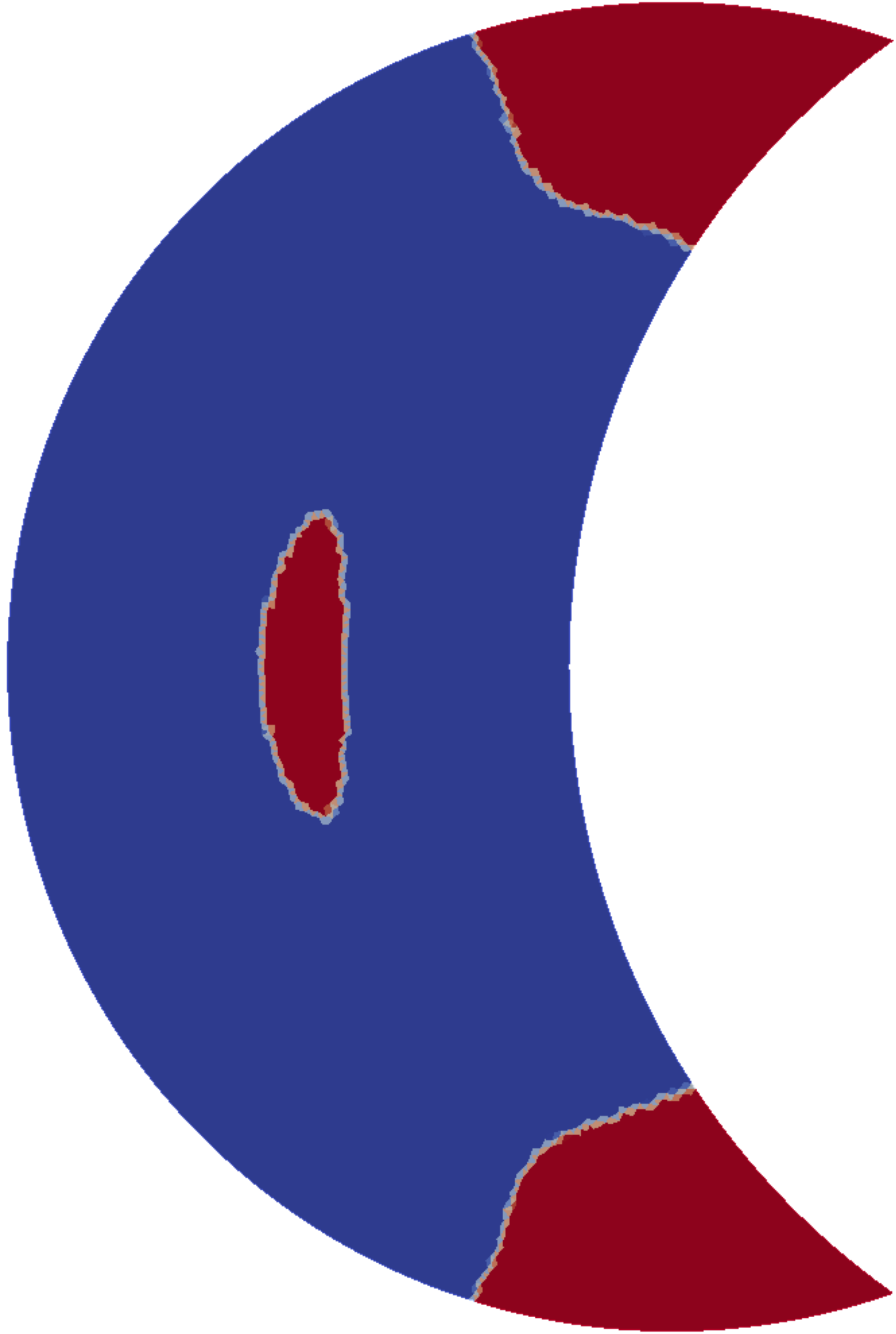}
 \caption{$m/|\Omega|=0.2$}
\end{subfigure}
\begin{subfigure}[b]{.2\textwidth}
 \includegraphics[width=\linewidth]{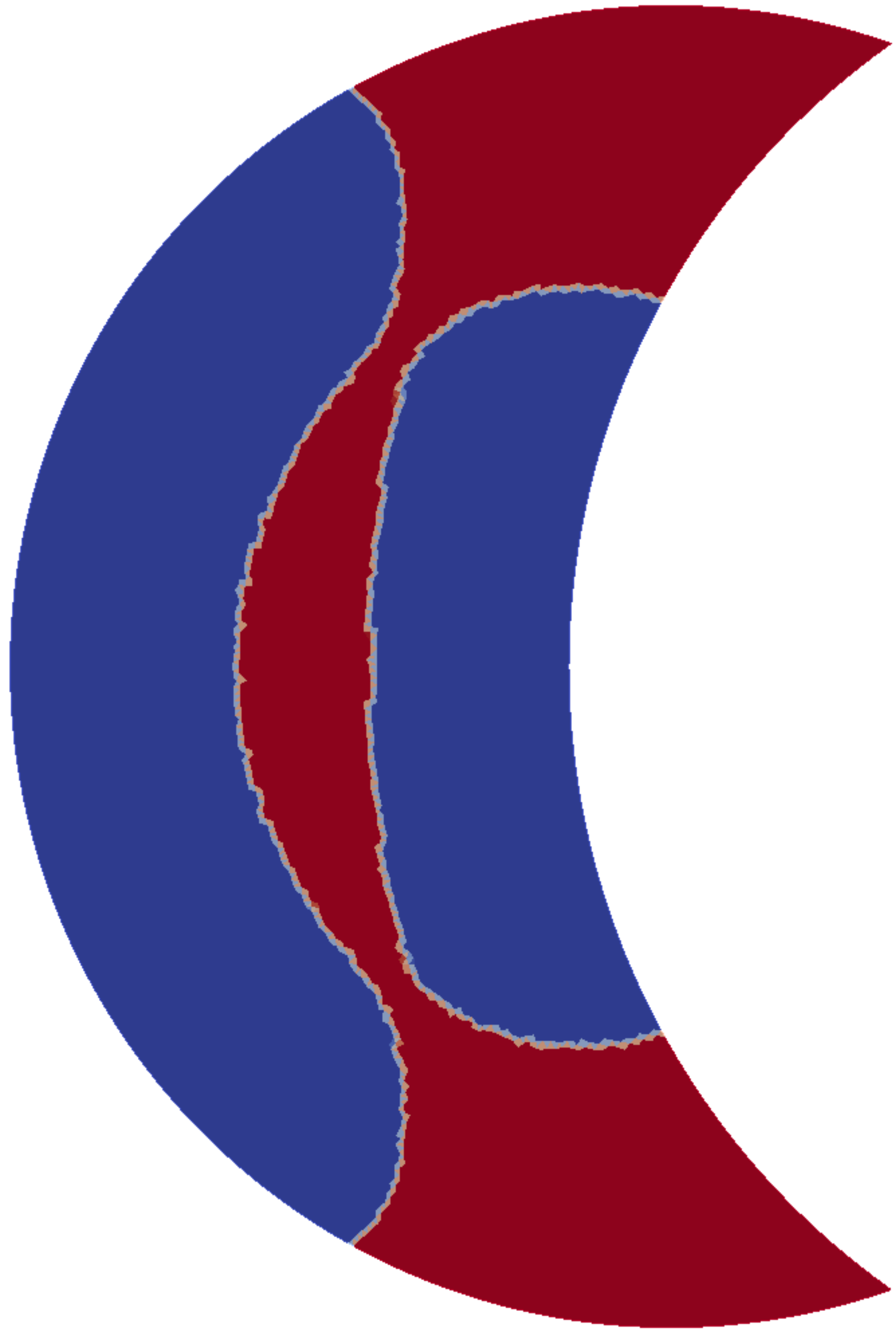}
 \caption{$m/|\Omega|=0.4$}
\end{subfigure}
\begin{subfigure}[b]{.2\textwidth}
 \includegraphics[width=\linewidth]{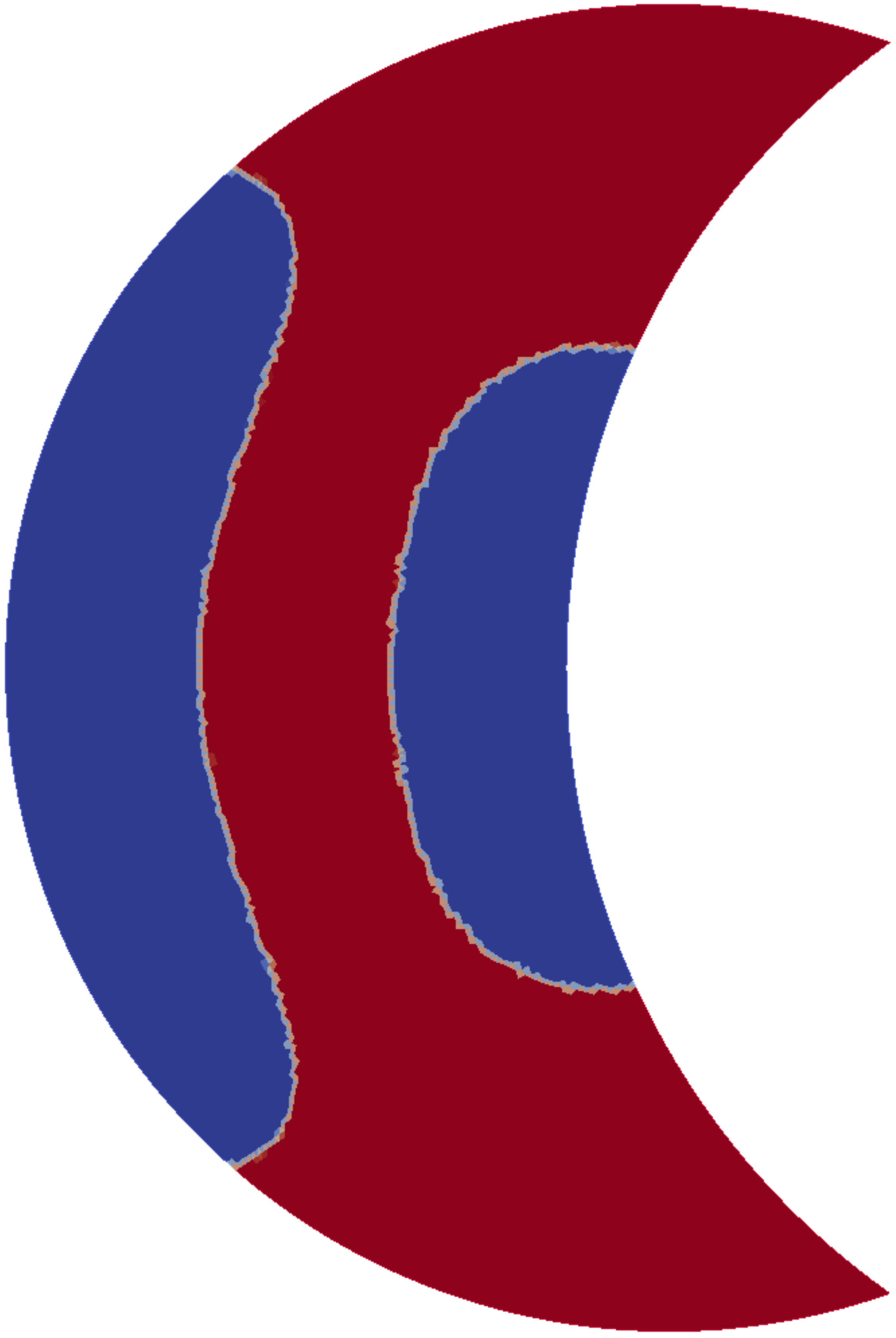}
 \caption{$m/|\Omega|=0.6$}
\end{subfigure}
\begin{subfigure}[b]{.2\textwidth}
 \includegraphics[width=\linewidth]{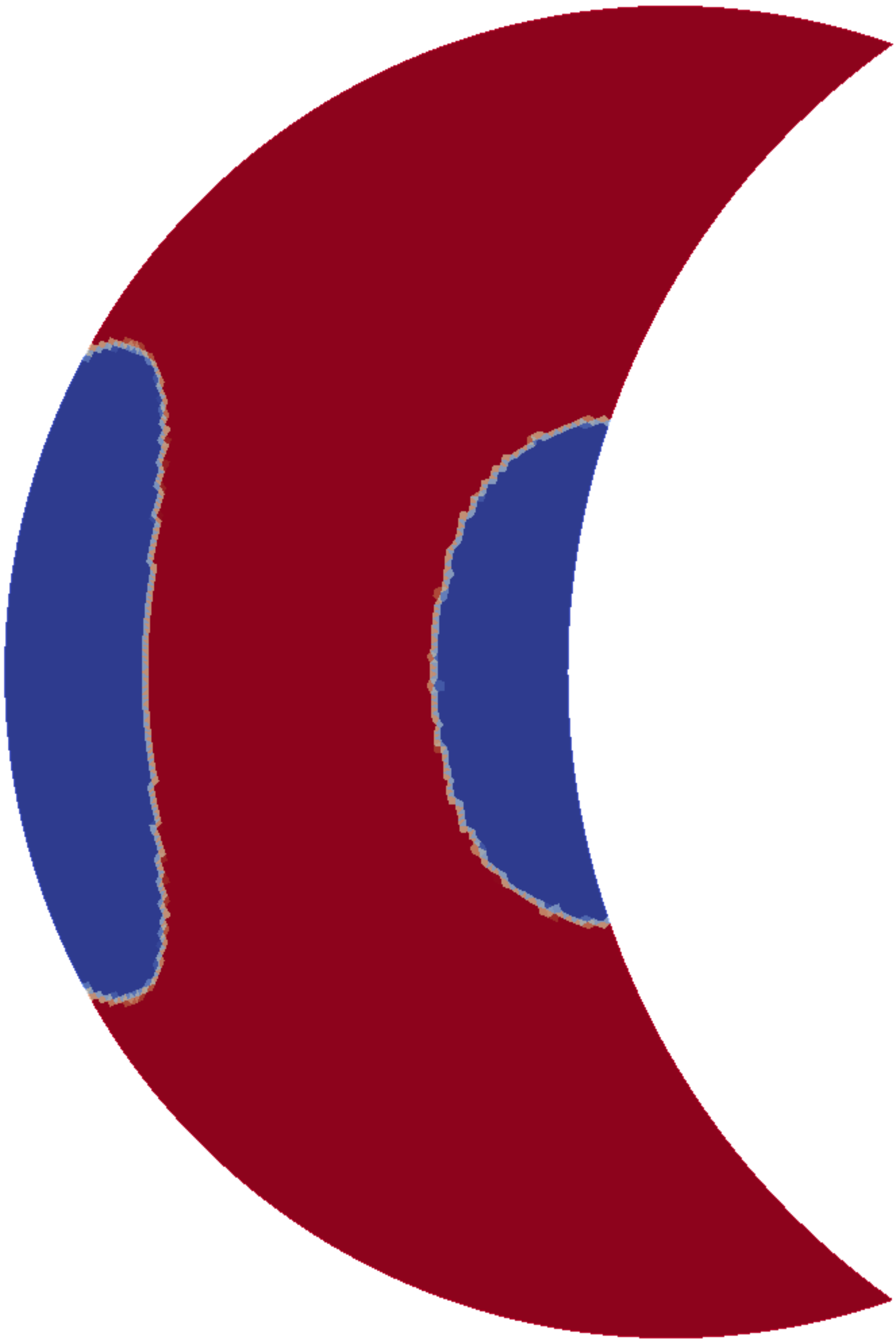}
 \caption{$m/|\Omega|=0.8$}
\end{subfigure}
\begin{subfigure}[b]{0.07\textwidth}
 \vfil
 \includegraphics[width=\linewidth]{colorbar}
 \\~\\~\\~
\end{subfigure}
\caption{Nearly optimal distribution $B$ in a crescent for $\varepsilon=10^{-6}$ .}
\label{crescent=e-6}
\end{figure}

\begin{figure}[!htH]
\centering
\begin{subfigure}[b]{.3\textwidth}
 \includegraphics[width=\linewidth]{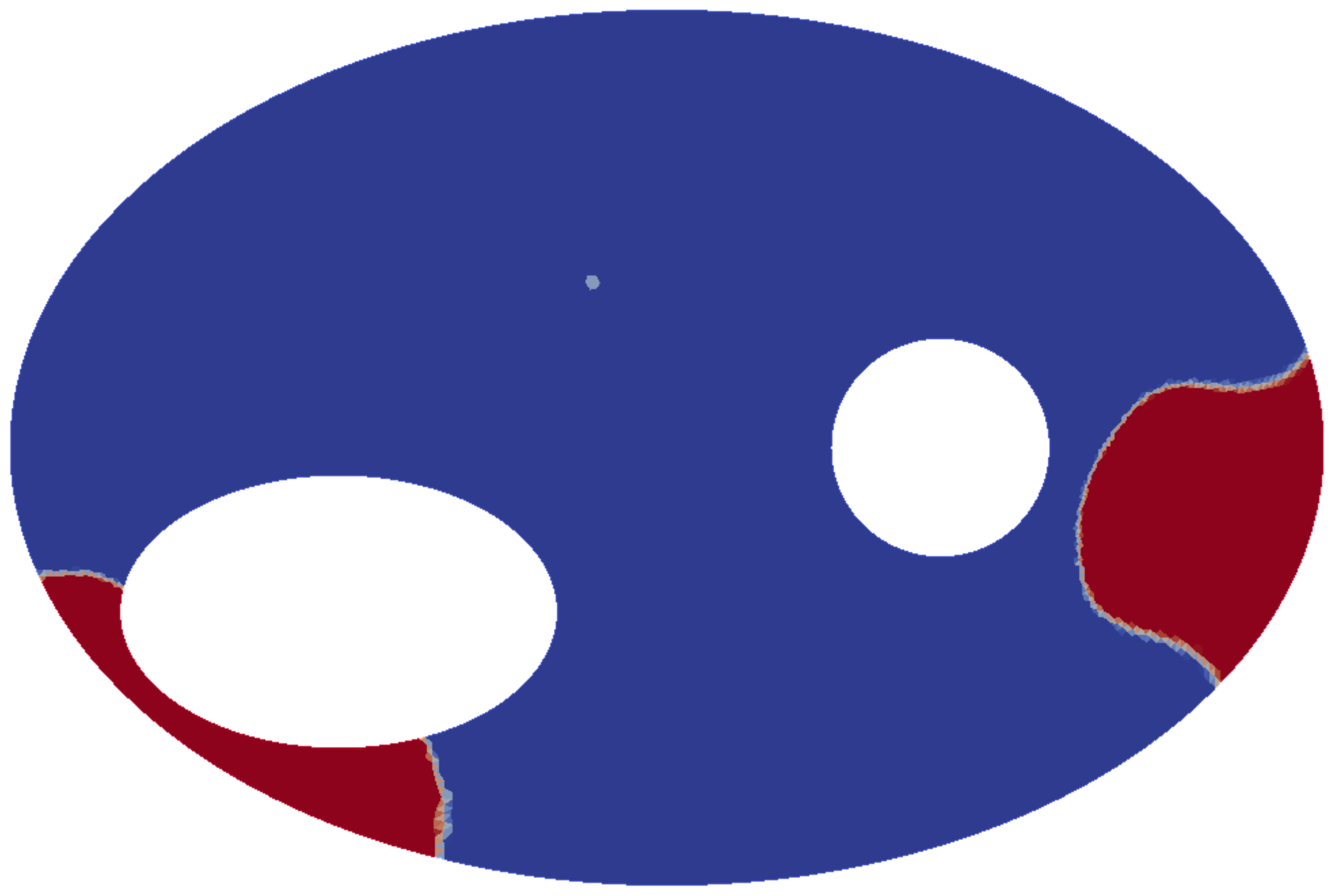}
 \caption{$m/|\Omega|=0.1$}
\end{subfigure}
\begin{subfigure}[b]{.3\textwidth}
 \includegraphics[width=\linewidth]{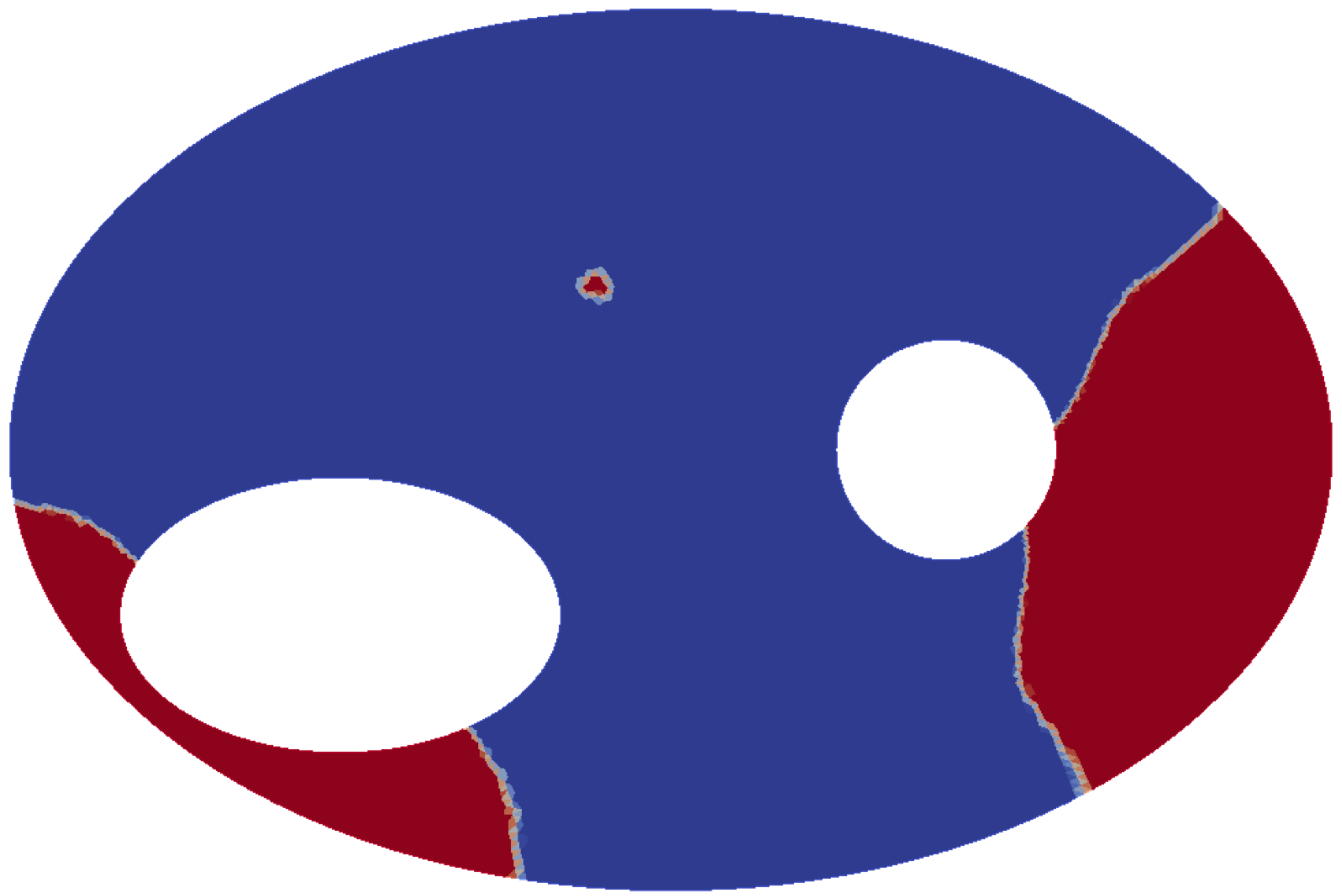}
 \caption{$m/|\Omega|=0.2$}
\end{subfigure}
\begin{subfigure}[b]{.3\textwidth}
 \includegraphics[width=\linewidth]{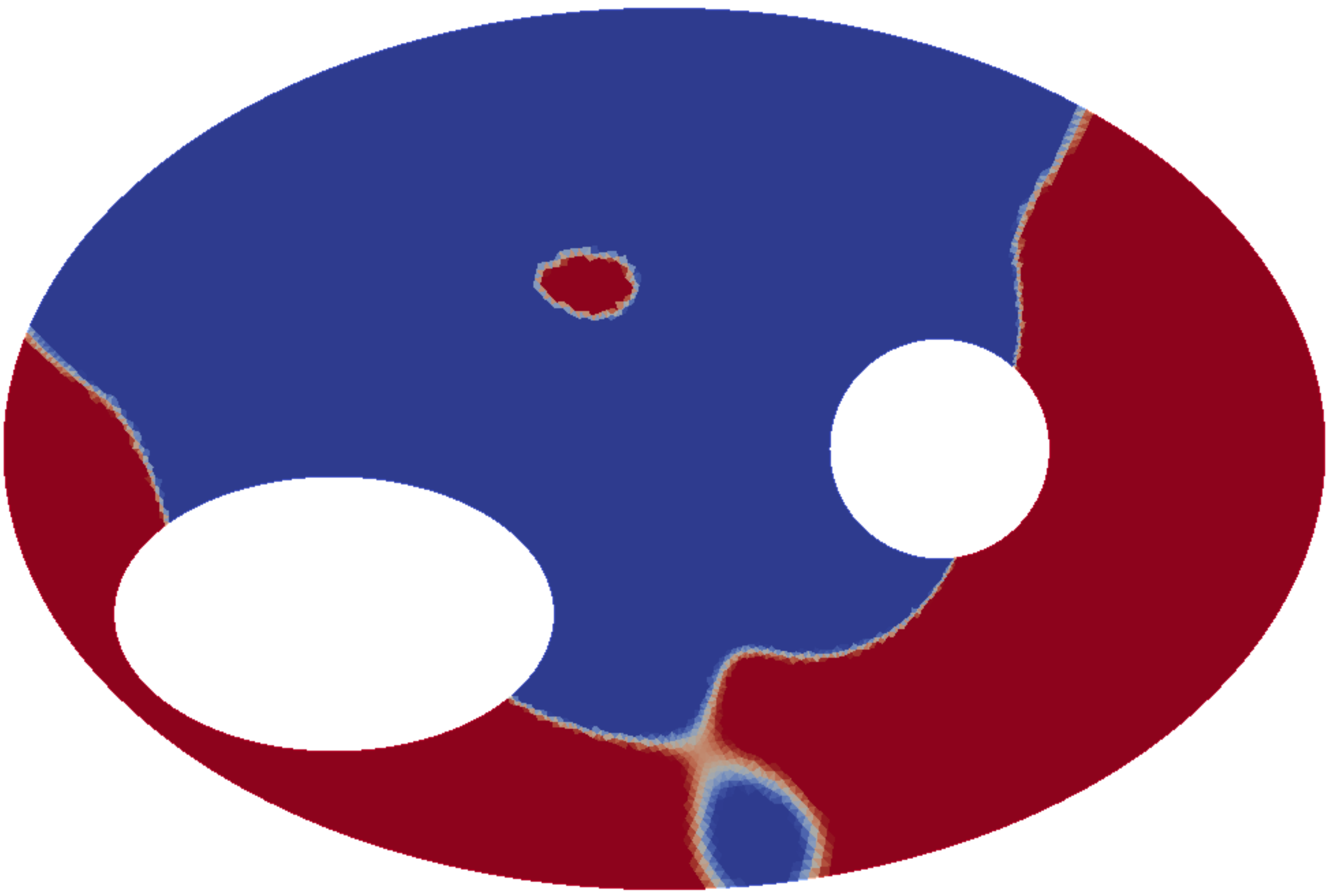}
 \caption{$m/|\Omega|=0.4$}
\end{subfigure}

\begin{subfigure}[b]{.3\textwidth}
 \includegraphics[width=\linewidth]{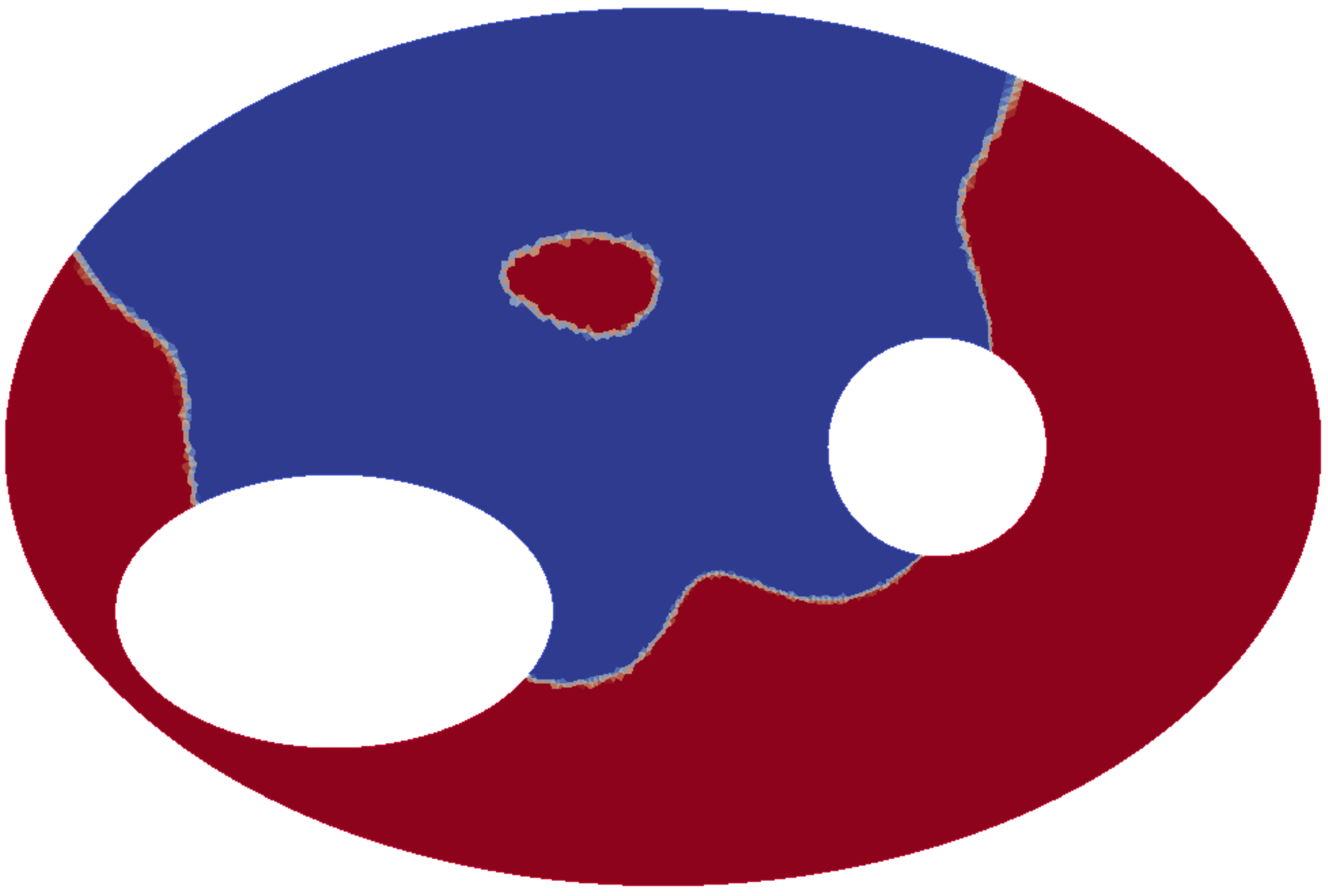}
 \caption{$m/|\Omega|=0.5$}
\end{subfigure}
\begin{subfigure}[b]{.3\textwidth}
 \includegraphics[width=\linewidth]{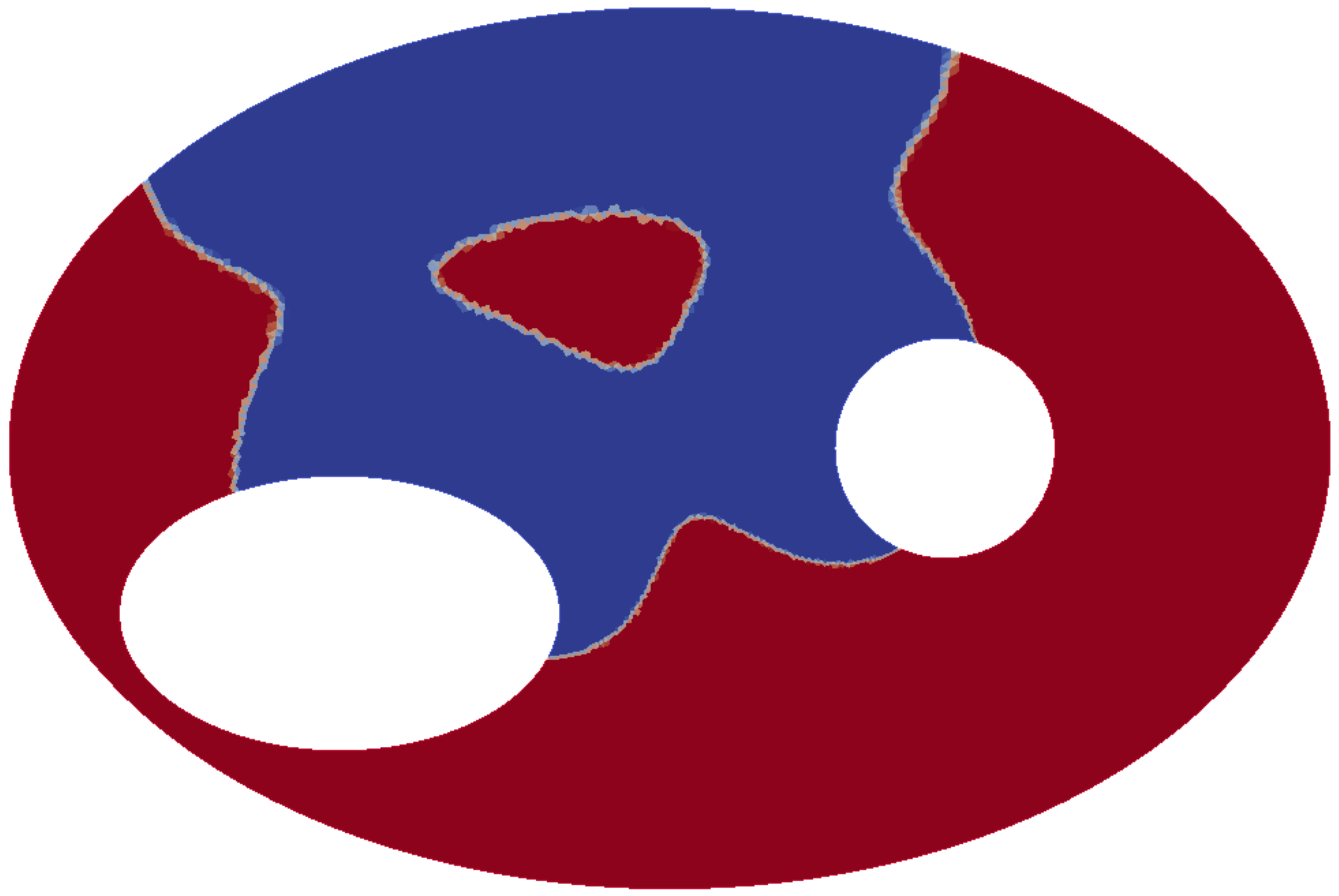}
 \caption{$m/|\Omega|=0.6$}
\end{subfigure}
\begin{subfigure}[b]{.3\textwidth}
 \includegraphics[width=\linewidth]{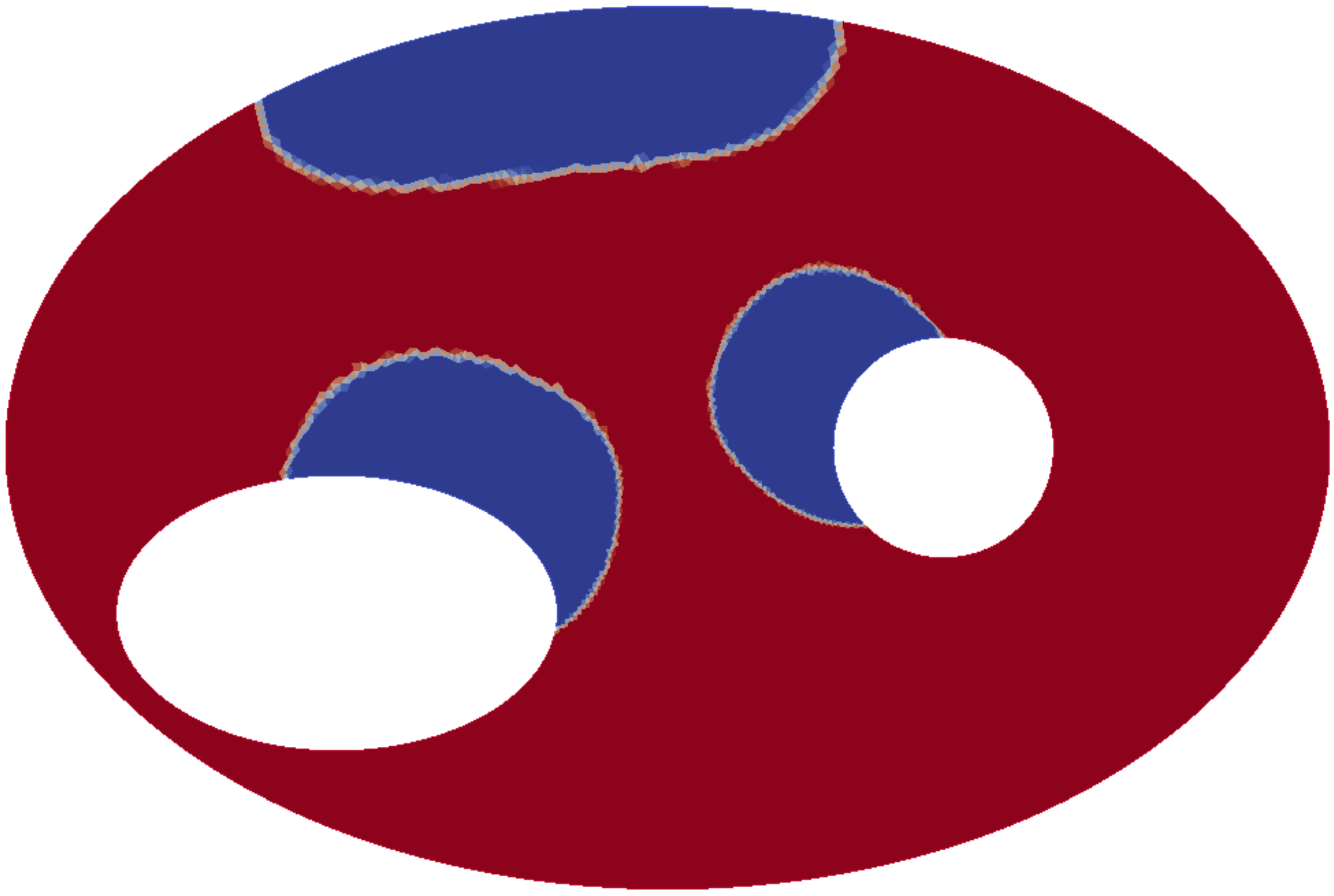}
 \caption{$m/|\Omega|=0.8$}
\end{subfigure}
\caption{Nearly optimal distribution $B$ in a perforated ellipse for $\varepsilon=10^{-6}$ .}
\label{ellipse=e-6}
\end{figure}

Let us emphasize that in the last case, even for $\varepsilon=10^{-6}$, we observe clearly in Figure \ref{ellipse=e-6}(c) a small area where $\theta$ takes values strictly between $0$ and $1$ where we see the effect of the modelling with a second order approximation.

\section{Appendix}
\setcounter{equation}{0}
\setcounter{theorem}{0}

\begin{lemma}
\label{pseudo:ordre0}
For $\theta \in L^\infty(\Omega)$, if $z \in H^1_0(\Omega)$ solves 
\begin{equation}\label{eqz}
-\Delta z=\dive(\theta \nabla u_0) \quad \mbox{ in  }\R^N\,,
\end{equation}
 then $\theta \mapsto Q(\theta): =\nabla z$ defines  a pseudo-differential operator with symbol
\begin{equation}\label{symbol}
q(x,\xi)=-\,\frac{\xi\cdot\nabla u_0(x)}{|\xi|^2}\,\xi.
\end{equation}
Note that $q$ is homogenous of degree $0$ in $\xi$.
\end{lemma}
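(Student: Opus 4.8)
The statement is an identification of the solution operator of $-\Delta z = \dive(\theta\,\nabla u_0)$ in $\R^N$ with an explicit order-zero pseudo-differential operator. The natural route is to pass to the Fourier side. First I would write $f := \theta\,\nabla u_0 \in L^2(\R^N)^N$ (using that $u_0 \in H^1_0(\Omega)$ and $\theta \in L^\infty$, extended by zero outside $\Omega$), so that the equation reads $-\Delta z = \dive f$, i.e. on the Fourier transform side $|\xi|^2\,\widehat z(\xi) = i\,\xi\cdot\widehat f(\xi)$, hence $\widehat z(\xi) = i\,\dfrac{\xi\cdot\widehat f(\xi)}{|\xi|^2}$. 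Then $\widehat{\nabla z}(\xi) = i\xi\,\widehat z(\xi) = -\,\dfrac{\xi\,(\xi\cdot\widehat f(\xi))}{|\xi|^2}$, that is, $\nabla z = Q(\theta)$ is the Fourier multiplier with matrix symbol $-\,\dfrac{\xi\otimes\xi}{|\xi|^2}$ acting on $f = \theta\,\nabla u_0$. Since $f$ itself is the product of the (bounded) function $\theta$ with the fixed vector field $\nabla u_0(x)$, composing this multiplier with multiplication by $\nabla u_0$ exhibits $\theta \mapsto \nabla z$ as a pseudo-differential operator whose (full, $x$-dependent) symbol is obtained by applying the matrix symbol to the vector $\nabla u_0(x)$:
\[
q(x,\xi) \;=\; -\,\frac{\xi\,\bigl(\xi\cdot\nabla u_0(x)\bigr)}{|\xi|^2}\;=\;-\,\frac{\xi\cdot\nabla u_0(x)}{|\xi|^2}\,\xi,
\]
which is exactly \eqref{symbol}, and visibly homogeneous of degree $0$ in $\xi$.

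**Order in which I would carry it out.** (i) Reduce the second-order problem to the first-order multiplier: from $-\Delta z = \dive(\theta\nabla u_0)$ in $\R^N$, take Fourier transforms and solve algebraically for $\widehat z$, then for $\widehat{\nabla z}$; justify that $(-\Delta)^{-1}\dive$ is well defined from $L^2(\R^N)^N$ to $\dot H^1(\R^N)$ (equivalently that the multiplier $-\xi\otimes\xi/|\xi|^2$ is $L^2$-bounded, being homogeneous of degree $0$ and smooth off the origin — this is the Riesz-transform-type bound). (ii) Insert $\widehat{(\theta\nabla u_0)}$ and read off that $\nabla z(x)$ equals, in the Kohn–Nirenberg quantization, $\dfrac{1}{(2\pi)^N}\int_{\R^N} e^{ix\cdot\xi}\,q(x,\xi)\,\widehat\theta(\xi)\,d\xi$ with $q$ as above — here the $x$-dependence enters only through the smooth factor $\nabla u_0(x)$ multiplying $\theta$ before the multiplier acts, which is the standard way a constant-coefficient multiplier composed with a variable multiplication becomes a genuine $\Psi$DO. (iii) Check the homogeneity $q(x,\lambda\xi) = q(x,\xi)$ for $\lambda>0$, so $q$ is a polyhomogeneous symbol of order $0$, which is precisely the regularity needed to invoke Theorem \ref{Hmesure:theoreme1} in Step 4 of the proof of Theorem \ref{fonctionnelle:relaxee}.

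**Main obstacle.** The computation itself is routine; the only genuine subtlety is the interplay between working in $\R^N$ and the fact that $u_0$ lives on the bounded domain $\Omega$, so $\nabla u_0$ has only $L^2$ (not smooth) coefficients — strictly speaking $q(x,\xi)$ is then a symbol with merely $L^\infty\cap L^2$ dependence in $x$, not a classical smooth symbol. I expect the cleanest fix, consistent with how the lemma is used, is to treat $\nabla u_0$ as a fixed multiplier and note that all one really needs downstream is (a) the $L^2\to L^2$ boundedness of $Q$ and (b) the explicit form of $q(x,\cdot)$ as a degree-zero homogeneous symbol for a.e. fixed $x$, which is exactly what the $H$-measure machinery in \cite{AllaireGutierrez} requires; the localization argument (Step 5 of the proof of Theorem \ref{fonctionnelle:relaxee}, with the cutoffs $\zeta_k,\psi_k$) is what reconciles the $\R^N$ statement of this lemma with the bounded-domain setting, so no extra regularity of $u_0$ is needed here. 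A secondary point to state carefully is that $\nabla z$ is determined only up to the kernel of $-\Delta$ on $\R^N$; requiring $z \in \dot H^1(\R^N)$ (the natural energy space, or equivalently $\widehat z \in L^2$ with $|\xi|\widehat z \in L^2$) pins down $\nabla z$ uniquely and makes the multiplier formula unambiguous.
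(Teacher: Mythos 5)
Your proposal is correct and follows essentially the same route as the paper: a Fourier-side computation in $\R^N$ solving algebraically for $\widehat z$ and then $\widehat{\nabla z}$, treating $\nabla u_0(x)$ as a multiplicative coefficient and reading off the symbol $q(x,\xi)=-\frac{\xi\cdot\nabla u_0(x)}{|\xi|^2}\,\xi$ (the paper's argument is exactly this formal calculation, with the opposite Fourier sign convention, which cancels in the end). Your additional remarks on $L^2$-boundedness, uniqueness in the homogeneous energy space, and the role of the localization in Step 5 are sensible elaborations of points the paper leaves implicit, but they do not change the method.
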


\begin{proofof}{Lemma \ref{pseudo:ordre0}}
We first consider the whole space case in order to use Fourier calculus. Indeed, denoting by $\,\widehat{}\,$ the Fourier transform and starting from Equation (\ref{eqzn}), formally we can calculate as follows
\begin{align*}
  (-\Delta z)\,\widehat{}\ (\xi) &= (\dive(\theta\nabla u_0))\,\widehat{}\ (\xi)\\
  -(-|\xi|^2\widehat{z}) &= -i\xi\cdot\nabla u_0(x)\widehat{\theta}\\
  \widehat{z} &= -\frac{i\xi\cdot\nabla u_0(x)}{|\xi|^2}\,\widehat{\theta},
\end{align*}
which gives
\begin{equation*}
  \widehat{\nabla z}(\xi) = -i\xi\widehat{z}(\xi) = -i\xi\left(-\frac{i\xi\cdot\nabla u_0(x)}{|\xi|^2}\,\widehat{\theta}\right)= \underbrace{\left(-\frac{\xi\cdot\nabla u_0(x)}{|\xi|^2}\,\xi\right)}_{q(x,\xi)}\widehat{\theta}.
\end{equation*}
\end{proofof}

\paragraph{Acknowledgements:} The authors thank support from ECOS-CONICYT Grant C13 05. The first author is also partially supported by Fondecyt Grant N$^\circ$ 1140773. The second author is also partially supported by the ANR Grant ARAMIS and OPTIFORM. The last author acknowledges the support of Fondecyt Grant N$^\circ$  1130595. 

\bibliographystyle{plain}

\bibliography{CDMQ2013_r}

\noindent------------------------------------------------------------------

\noindent Carlos Conca

\smallskip

\noindent Universidad de Chile
\newline Department of Engineering Mathematics,
Center for Mathematical Modelling (CMM), UMI 2807 CNRS-Chile
\& Center for Biotechnology and Bioengineering (CeBiB),
Universidad de Chile, Santiago, Chile.

\smallskip

\noindent E-mail: \texttt{cconca@dim.uchile.cl}
\newline\bigskip

\noindent Marc Dambrine

\smallskip

\noindent D\'epartement de Math\'ematiques, Universit\'{e} de Pau et des Pays de l'Adour

\smallskip

\noindent E-mail: \texttt{marc.dambrine@univ-pau.fr}
\newline\bigskip

\noindent Rajesh Mahadevan

\smallskip

\noindent Depto. de Matem\'atica, Fac. Cs. F\'is. y Matem\'aticas, Universidad de Concepci\'on

\smallskip

\noindent E-mail: \texttt{rmahadevan@udec.cl}
\newline\bigskip

\noindent Duver Quintero

\smallskip

\noindent Universit\'e de Pau et des Pays de l'Adour \& Universidad de Chile.

\smallskip

\noindent E-mail: \texttt{duver@dim.uchile.cl}
\newline\bigskip

\end{document}